\documentclass[10pt,a4paper]{amsart}
\usepackage[T1]{fontenc}
\usepackage[english]{babel}
\usepackage{amsthm,amssymb,amsfonts,mathrsfs,calligra,amsmath,fancyhdr}
\pagestyle{myheadings}
\usepackage{enumitem}
\usepackage{scalerel}
\usepackage{rotating}
\usepackage{microtype}
\usepackage[maxbibnames=99,style=numeric, backend=biber, backref, backrefstyle=none, maxalphanames=6]{biblatex}
\addbibresource{Bibliography.bib}

\newcommand{\q}[1]{``#1''}

\usepackage[usenames,dvipsnames,usenames,dvipsnames,svgnames,table]{xcolor}
\definecolor{newblue} {RGB} {0,100,200}
\usepackage[nodisplayskipstretch]{setspace}\setstretch{1.1}
\usepackage[unicode,pdftex, bookmarks, linktocpage=true]{hyperref}
\usepackage[babel]{csquotes}
\usepackage{microtype}
\usepackage[left=3cm, right=3cm,top=2.7cm,bottom=2.7cm]{geometry}
\usepackage[all]{xy}
\usepackage{bookmark}
\usepackage{orcidlink}
\usepackage{dutchcal}


\usepackage[unicode,pdftex, bookmarks, linktocpage=true]{hyperref}
\hypersetup{hypertexnames=false, colorlinks=true, citecolor=blue, linkcolor=blue, urlcolor=magenta, pdfpagemode=UseNone , breaklinks=true}

\usepackage{tikz}
\usetikzlibrary{matrix,arrows,calc,decorations,patterns,decorations.markings}
\usepackage{tikz-cd}

\usepackage{bm}

\usepackage{etoolbox}
\patchcmd{\section}{\normalfont}{\normalfont\normalsize}{}{}

\usepackage{bbm}
\usepackage{tkz-euclide}
\tikzset{
  dotted/.style={pattern=dots,pattern color=#1},
  dotted/.default=black
}
\tikzset{
  fdotted/.style={pattern=crosshatch dots,pattern color=#1},
  fdotted/.default=black
}
\tikzset{
  scopedlines/.style={pattern=north east lines,pattern color=#1},
  scopedlines/.default=black
}
\tikzset{
  hrlines/.style={pattern=horizontal lines,pattern color=#1},
  hrlines/.default=black
}
\newcommand*{\DashedArrow}[1][]{\mathbin{\tikz [baseline=-0.25ex,-latex, dashed,#1] \draw [#1] (0pt,0.5ex) -- (1.3em,0.5ex);}}
\usepackage{microtype}

\usepackage{mathtools}

\theoremstyle{plain}
\newtheorem{lem}{Lemma}[section]

\renewcommand\qedsymbol{$\blacksquare$}

\theoremstyle{definition}
\newtheorem{defn}[lem]{Definition}

\theoremstyle{definition}
\newtheorem{rmk}[lem]{Remark}

\theoremstyle{definition}

\theoremstyle{plain}
\newtheorem{cor}[lem]{Corollary}

\theoremstyle{plain}
\newtheorem{prop}[lem]{Proposition}

\theoremstyle{plain}
\newtheorem{thm}[lem]{Theorem}

\theoremstyle{plain}

\theoremstyle{definition}
\newtheorem{ex}[lem]{Example}

\newenvironment{proofA}{\vspace{0.2cm}\paragraph{\bf\textit{Proof  of Theorem \ref{thmA}}}}{\hfill \qedsymbol \medskip}

\newenvironment{proofB}{\vspace{0.2cm}\paragraph{\bf\textit{Proof  of Theorem \ref{thmB}}}}{\hfill \qedsymbol \medskip}

\newenvironment{proofC}{\vspace{0.2cm}\paragraph{\bf\textit{Proof  of Theorem \ref{thmC}}}}{\hfill \qedsymbol \medskip}

\newenvironment{proofD}{\vspace{0.2cm}\paragraph{\bf\textit{Proof  of Proposition \ref{propCY}}}}{\hfill \qedsymbol \medskip}

\newcommand{\Z}{\mathbf{Z}}
\newcommand{\C}{\mathbf{C}}

\renewcommand{\P}{\mathbf{P}}
\newcommand{\K}{\mathbf{K}}
\newcommand{\R}{\mathbf{R}}
\newcommand{\A}{\mathbf{A}}
\newcommand{\Q}{\mathbf{Q}}
\newcommand{\Cox}{\mathrm{Cox}}
\newcommand{\Nef}{\mathrm{Nef}}
\renewcommand{\dim}{\mathrm{dim}}

\renewcommand{\v}{\underline{v}}
\newcommand{\w}{\underline{w}}
\newcommand{\Pic}{\mathrm{Pic}}
\newcommand{\Cl}{\mathrm{Cl}}
\renewcommand{\O}{\mathscr{O}}
\DeclareMathOperator{\Hom}{\mathcal{H\!om}}

\newcommand\blfootnote[1]{%
  \begingroup
  \renewcommand\thefootnote{}\footnote{#1}%
  \addtocounter{footnote}{-1}%
  \endgroup
}

\usepackage{xparse}
\usepackage{environ}

\ExplSyntaxOn
\NewEnviron{pmatrixT}
 {
  \marine_transpose:V \BODY
 }

\int_new:N \l_marine_transpose_row_int
\int_new:N \l_marine_transpose_col_int
\seq_new:N \l_marine_transpose_rows_seq
\seq_new:N \l_marine_transpose_arow_seq
\prop_new:N \l_marine_transpose_matrix_prop
\tl_new:N \l_marine_transpose_last_tl
\tl_new:N \l_marine_transpose_body_tl

\cs_new_protected:Nn \marine_transpose:n
 {
  \seq_set_split:Nnn \l_marine_transpose_rows_seq { \\ } { #1 }
  \int_zero:N \l_marine_transpose_row_int
  \prop_clear:N \l_marine_transpose_matrix_prop
  \seq_map_inline:Nn \l_marine_transpose_rows_seq
   {
    \int_incr:N \l_marine_transpose_row_int
    \int_zero:N \l_marine_transpose_col_int
    \seq_set_split:Nnn \l_marine_transpose_arow_seq { & } { ##1 }
    \seq_map_inline:Nn \l_marine_transpose_arow_seq
     {
      \int_incr:N \l_marine_transpose_col_int
      \prop_put:Nxn \l_marine_transpose_matrix_prop
       {
        \int_to_arabic:n { \l_marine_transpose_row_int }
        ,
        \int_to_arabic:n { \l_marine_transpose_col_int }
       }
       { ####1 }
     }
   }
   \tl_clear:N \l_marine_transpose_body_tl
   \int_step_inline:nnnn { 1 } { 1 } { \l_marine_transpose_col_int }
    {
     \int_step_inline:nnnn { 1 } { 1 } { \l_marine_transpose_row_int }
      {
       \tl_put_right:Nx \l_marine_transpose_body_tl
        {
         \prop_item:Nn \l_marine_transpose_matrix_prop { ####1,##1 }
         \int_compare:nF { ####1 = \l_marine_transpose_row_int } { & }
        }
      }
     \tl_put_right:Nn \l_marine_transpose_body_tl { \\ }
    }
   \begin{pmatrix}
   \l_marine_transpose_body_tl
   \end{pmatrix}
 }
\cs_generate_variant:Nn \marine_transpose:n { V }
\cs_generate_variant:Nn \prop_put:Nnn { Nx }
\ExplSyntaxOff

\title[Birational geometry of hypersurfaces]{Birational geometry of hypersurfaces in products of weighted projective spaces}
\author{Francesco Antonio Denisi \orcidlink{0000-0002-1128-7890}}
\address{Université Paris Cité and Sorbonne Université, CNRS, IMJ-PRG, F-75013 Paris, France \\ \normalfont{E-mail: denisi@imj-prg.fr}}

\begin{document}
\begin{abstract}
We study the birational geometry of hypersurfaces in products of weighted projective spaces, extending results previously established by J.\ C.\ Ottem. For most cases where these hypersurfaces are Mori dream spaces, we determine all relevant cones and characterise their birational models, along with the small $\Q$-factorial modifications to them. We also provide a presentation of their Cox ring. Finally, we establish the birational form of the Kawamata-Morrison cone conjecture for terminal Calabi-Yau hypersurfaces in Gorenstein products of weighted projective spaces.

\end{abstract}
\maketitle
\blfootnote{This project has received funding from the European Research Council (ERC) under the European Union's Horizon 2020 research and innovation programme (ERC-2020-SyG-854361-HyperK).}

\section{Conventions and notations}
 Unless otherwise stated, we work over an uncountable, algebraically closed field $\K$ of characteristic 0. Throughout this paper, a variety will be a separated integral scheme of finite type over $\K$. Given a normal projective variety $X$ and a prime divisor $Y\subset X$, we will say that $Y$ is Cartier if the associated reflexive sheaf $\O_X(Y)$ is invertible.
\medskip

Given a polynomial ring $\K[x_0, \dots, x_n]$ with grading defined by $\mathrm{deg}(x_i) = v_i$, where $v_i$ is a positive natural number for each $i = 0, \dots, n$, we denote by $\P(\v)$ the projective $\K$-scheme $\mathrm{Proj}(\K[x_0, \dots, x_n]) \to \mathrm{Spec}(\K)$. The scheme $\P(\v)$ is called the \textit{weighted projective space} with weights $v_0, \dots, v_n$, and its dimension is $n$. Depending on whether we want (or need) to specify the dimension $n$ of a weighted projective space, we will write $\P^n(\v)$ or $\P(\v)$. Moreover, we set $a(\v) := \mathrm{l.c.m.}\{v_i\}_{i=0}^n$, i.e.\ the least common multiple of the weights. In this paper, all weighted projective spaces $\P(\v)$ are assumed to be well-formed; this means that the greatest common divisor of all weights is one, as well as the greatest common divisor of any subset of weights of cardinality $n$.
\medskip

Given a product $\P^n(\v) \times \P^m(\w)$, with projections $p_1$ and $p_2$, we denote by $\mathscr{O}_{\P^n(\v) \times \P^m(\w)}(d,e)$ (sometimes $\mathscr{O}(d,e)$) the reflexive sheaf
\[
(\mathscr{O}_{\P^n(\v)}(d) \boxtimes \mathscr{O}_{\P^m(\w)}(e))^{\vee\vee} = (p_1^*(\mathscr{O}_{\P^n(\v)}(d)) \otimes p_2^*(\mathscr{O}_{\P^m(\w)}(e)))^{\vee\vee}.
\]
To maintain consistency with Ottem's notation in \cite{Ottem15}, if $Y$ is a general Cartier hypersurface in $\P(\v) \times \P(\w)$, we denote $\mathscr{O}(1,0)_{|Y}$ by $H_1$ and $\mathscr{O}(0,1)_{|Y}$ by $H_2$. For any positive integer $k$, we denote $kH_1$ as $(p_1^*(\O_{\P(\v)}(k)))_{|Y}$, with analogous notation for $kH_2$. This notation will also apply to hypersurfaces in products of multiple weighted projective spaces. Sometimes, without specifying it, we will refer to $kH_1$ (resp.\ $kH_2$) as a Weil divisor for any positive integer $k$, with some abuse of notation.
\medskip

Even though a non-trivial effective line bundle on a weighted projective space $\P^n(\underline{v})$ may not be very ample (e.g.\ \cite[Remark 3, p.\ 138]{BR86}), we will denote by $\nu_e(\P^n(\underline{v}))$ the image of the Veronese-type morphism $\nu_e = \varphi_{|\O_{\P(\underline{w})}(e)|}$. Moreover, we denote by $I_e$ the homogeneous ideal of $\nu_e(\P^n(\underline{v}))$ in $\P^N$, where $N + 1 = h^0(\P(\underline{v}), \O_{\P(\underline{v})}(e))$.

\section{Introduction}
A normal and $\Q$-factorial projective variety $Y$ is called a Mori dream space if the following conditions are satisfied:
\begin{enumerate}
\item $h^1(Y,\mathscr{O}_Y)=0$,
\item  the nef cone $\Nef(Y)$ is generated by finitely many integral Cartier divisors which are semiample,
\item  there exist finitely many small $\Q$-factorial modifications $f_i\colon Y \DashedArrow Y'_i$ such that $Y'_i$ satisfies the first two items, for each $i$, and
\[\mathrm{Mov}(Y)=\bigcup_{i} f_i^*(\mathrm{Nef}(Y'_i)).\]
\end{enumerate}

Here, small $\Q$-factorial modifications are birational maps that do not change the underlying variety in codimension 1 (see Definition \ref{SQM} for a formal definition), $\mathrm{Mov}(Y)$ is the movable cone of $Y$ (see\ Definition \ref{movablecone}), and $\mathrm{Nef}(Y_i)$ is the nef cone of $Y_i$.
\vspace{0.1cm}

Mori dream spaces were introduced by Hu and Keel in \cite{HK00} and play an important role in birational geometry, as they exhibit ideal behaviour under the minimal model program (MMP). Indeed, it turns out that any MMP for any divisor can be carried out. Also, any nef divisor is semiample by definition. Due to these strict requirements, finding examples of Mori dream spaces is both interesting and challenging.
\medskip
 
In \cite{Ottem15}, Ottem studied the birational geometry of hypersurfaces in products of projective spaces and obtained a complete picture in the case of products of two projective spaces $X=\P^n\times \P^m$. Notably, he found that it is the bidegree $(d,e)$ that determines whether a hypersurface is a Mori dream space, and not its \q{ampleness}. In particular, he proved that, if the hypersurface has dimension at least 3, for $d > n$ and $e \geq 2$, the very general hypersurface $Y$ in the linear series $|\mathscr{O}_X(d, e)|$ is not a Mori dream space. Conversely, if $0<d \leq n$ or $d>0$ and $e=1$, the general hypersurface in $|\mathscr{O}_X(d, e)|$ is a Mori dream space, and he provided a presentation for its Cox ring. We refer the reader to Definition \ref{Coxring} for the definition of Cox ring, and to \cite[Proposition 1.2]{Ottem15} for an analogous result when $Y$ is a surface. We point out that Ito independently arrived at the same conclusion when $d\leq n$ and for $Y$ with dimension at least 3 (see \cite{Ito14}). However, Ito's results do not give information about the birational geometry of $Y$.
\medskip

Inspired by Ottem's work, this article aims to thoroughly investigate the birational geometry of Cartier hypersurfaces in products of two or more weighted projective spaces. We obtain a comprehensive understanding of the birational geometry of Cartier hypersurfaces \(Y\) of dimension at least 3 in a product \(\P(\v) \times \P(\w)$. When these hypersurfaces are Mori dream spaces, we compute all relevant cones in \(N^1(Y)_{\R}\) and explicitly describe all possible small \(\Q\)-factorial modifications \(Y \dashrightarrow Y'\) as well as the corresponding birational models \(Y'\). We also give a presentation for their Cox ring and, for specific values of \((d,e)\), identify the toric variety that governs the birational geometry of \(Y\), according to \cite[Proposition 2.11]{HK00}. As one may expect, also in this setting the bidegree $(d,e)$ determines whether the considered hypersurface is a Mori dream space. In particular, as it happens in the smooth case, for large values of $d$ the hypersurface is not a Mori dream space. The main result of the paper is the following.

\begin{thm}\label{thmA}
Let $X$ be a product $\P^n(\v) \times \P^m(\w)$, with $\dim(X)\geq 4$, and $\mathscr{O}_X(d,e)$ a line bundle on $X$, where $d$ and $e$ are positive integers. Furthermore, set $a(\w) := \mathrm{l.c.m.}\{w_i\}_{i=0}^m$.
\begin{enumerate}
\item If $n,m \geq 2$, the general member $Y$ of $|\mathscr{O}_X(d,e)|$ is a Mori dream space, and 
\[
\Cox(Y)=\K [x_0,\dots,x_n,y_0,\dots,y_m]/(f),
\]
where $f$ is an equation for $Y$ in $\P^n(\v) \times \P^m(\w)$.

\item If $X=\P^1 \times \P^m(\w)$, with $1 \leq d \leq m$, and $k\geq 1$, the general member $Y$ of $|\mathscr{O}_X(d,ka(\w))|$ is a Mori dream space.
\vspace{0.2cm} 

$\bullet$ If $1<d<m$, we have
\[
\mathrm{Eff}(Y)=\mathrm{Mov}(Y)=\R_{\geq 0}H_1+\R_{\geq 0}(ka(\w)H_2-H_1),
\] 
\[
\mathrm{Nef}(Y)=\R_{\geq 0}H_1+\R_{\geq 0}H_2.
\]
In this case, $Y$ admits a small $\Q$-factorial modification, a fibration onto $\P^1$ and a dominant rational fibration to $\P^{d-1}$.

$\bullet$ If $d=1$, we have 
\[
\mathrm{Eff}(Y)=\R_{\geq 0}H_1+\R_{\geq 0}(ka(\w)H_2-H_1),
\]
\[
\mathrm{Nef}(Y)=\mathrm{Mov}(Y)=\R_{\geq 0}H_1+\R_{\geq 0}H_2.
\]
In this case, $Y$ admits a divisorial contraction and a fibration onto $\P^1$.
\vspace{0.2cm}

$\bullet$ If $d=m$, we have
\[
\mathrm{Eff}(Y)=\mathrm{Mov}(Y)=\mathrm{Nef}(Y)=\R_{\geq 0}H_1+\R_{\geq 0}(ka(\w)H_2-H_1).
\] 
Moreover, if $$Y=\{x_0^df_0+x_0^{d-1}x_1f_1+\dots+x_0x_1^{d-1}f_{d-1}+x_1^d=0\},$$ a presentation for the Cox ring of $Y$ is
\[\mathrm{Cox}(Y)\cong \K[x_0,x_1,y_0,\dots,y_m,z_1,\dots,z_d]/I,\]
where $I=(f_0+x_0z_1,f_1-x_0z_1+x_1z_2,\dots,f_{d-1}-x_0z_{d-1}+x_1z_d,f_d-x_0z_d)$.

In this case, $Y$ has a fibration onto $\P^{d-1}$ and a fibration onto $\P^1$.

 \item If $X=\P^1\times \P^m(\w)$, with  $d \geq m+1$, and $k\geq 2$, the very general member of $|\mathscr{O}_X(d,ka(\w))|$ is not a Mori dream space. In particular $\mathrm{Eff}(Y)\subsetneq \overline{\mathrm{Eff}}(Y)$.
 \item If $X=\P^1 \times \P^m(\w)$, with $d>m$ and $k=1$, the special member of $|\mathscr{O}_X(d,a(\w))|$ is a Mori dream space.
\end{enumerate}
\end{thm}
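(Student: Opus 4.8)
The unifying tool is the Hu--Keel criterion: once $Y$ is normal, $\Q$-factorial and satisfies $h^1(Y,\O_Y)=0$, it is a Mori dream space precisely when its Cox ring is finitely generated, and the failure of this can be read off from the effective cone. So in every case I would first secure the hypotheses and set up the comparison between $Y$ and the toric ambient $X=\P^n(\v)\times\P^m(\w)$ through the ideal-sheaf sequence
\[
0\to \O_X(D-Y)\to \O_X(D)\to \O_Y(D)\to 0,
\]
computing the ambient cohomology by Künneth, using that a well-formed $\P(\w)$ has the same line-bundle cohomology pattern as ordinary projective space (only $H^0$ and $H^{\dim}$ survive). A weighted Bertini argument gives quasi-smoothness of the general $Y$, hence normality and $\Q$-factoriality, and $h^1(\O_Y)=0$ follows from $H^1(X,\O_X)=H^1(X,\O_X(-Y))=0$.

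For part (1) the decisive feature of $n,m\ge 2$ is that \emph{both} factors have no intermediate cohomology, so $H^1(X,\O_X(D-Y))=0$ for every effective $D$; this forces the restriction $H^0(X,\O_X(D))\to H^0(Y,\O_Y(D))$ to be surjective for all $D$. Combined with a Lefschetz-type isomorphism $\Cl(X)\xrightarrow{\sim}\Cl(Y)=\Z^2$ for the quasi-smooth ample hypersurface $Y$, this identifies $\Cox(Y)$ with $\Cox(X)/(f)=\K[x_0,\dots,x_n,y_0,\dots,y_m]/(f)$, which is finitely generated; hence $Y$ is a Mori dream space. The same vanishing fails the instant $n=1$, since then $H^1(\P^1,\O(a-d))$ survives, and it is exactly this term that enlarges the Cox ring and produces the richer birational geometry of part (2).

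Part (2) is the technical core. With $n=1$ I would first verify by hand that $N^1(Y)_\R$ is still two--dimensional and read the geometry off the two projections: $p_1\colon Y\to\P^1$ is a fibration with fibres the degree-$ka(\w)$ hypersurfaces of $\P(\w)$, while $p_2\colon Y\to\P(\w)$ is generically finite of degree $d$. A direct computation with the sequence above, in which $D-Y=-(d+1)H_1$, yields $h^0\bigl(Y,ka(\w)H_2-H_1\bigr)=d$; these $d$ sections give the rational fibration to $\P^{d-1}$ and show that $ka(\w)H_2-H_1$ is an effective extremal class. Running the analogous cohomology for arbitrary classes locates the remaining rays and separates the sub-cases $1<d<m$, $d=1$, $d=m$ according to whether each candidate ray is nef (pulled back from $p_1$ or $p_2$), merely movable (realised by a small $\Q$-factorial modification, which in the $1<d<m$ case is the flip interchanging the two chambers of $\Mov(Y)$), or only pseudoeffective. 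For the Cox ring in the $d=m$ case I would introduce $z_1,\dots,z_d$ as the distinguished sections of $ka(\w)H_2-H_1$ attached to the coefficients $f_i$, write down the relations of the shape $f_i-x_0z_i+x_1z_{i+1}$ dictated by the equation of $Y$, and confirm that the ideal $I$ exhausts all relations by matching multigraded Hilbert functions bidegree by bidegree.

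For parts (3) and (4) everything hinges on the pseudoeffective boundary. The asymptotics of $h^0\bigl(Y,\O_Y(-ta,tb)\bigr)$, extracted from the same sequence, have leading coefficient governed by $db-mea$ (with $e=ka(\w)$), so the big cone extends up to the ray $me\,H_2-d\,H_1$, which for $d\ge m+1$ lies strictly beyond $ka(\w)H_2-H_1$. The crux of the whole theorem, and the step I expect to be hardest, is part (3): proving that for the \emph{very general} member with $k\ge 2$ this boundary ray carries no effective divisor in any multiple, so that $\mathrm{Eff}(Y)\subsetneq\overline{\mathrm{Eff}}(Y)$ and the Mori dream property fails. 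I expect this to require a monodromy or general-position argument suppressing the ``extra'' sections that only a special equation could create, the role of $k\ge 2$ being to make the fibres of $p_1$ sufficiently rigid. Part (4) is the complementary phenomenon for $k=1$: I would exhibit an explicit special $f$ for which those sections do exist, so that the boundary ray becomes effective and the Cox ring finitely generated, and verify the Mori dream property directly on this special model rather than generically.
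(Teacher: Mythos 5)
Your outline identifies the right cones and the right dichotomies, but at the three places where the theorem actually requires work your proposal either asserts the conclusion or explicitly defers it, and at one of these places the method you propose would not suffice. The most serious issue is in part (2): you say the movable ray is ``realised by a small $\Q$-factorial modification, which in the $1<d<m$ case is the flip interchanging the two chambers of $\Mov(Y)$,'' but nothing in your argument produces this modification -- and constructing it is the technical core of the paper's proof (Proposition \ref{prop:genmembermds}). There, $Y$ is exhibited as the first degeneracy locus of a map $\O^{d+1}\to\O(1,0)^d\oplus\O(0,e)$ given by an explicit matrix $M$, the model $Y'$ is defined in $\P^{d-1}\times\P^m(\w)$ by the maximal minors of a companion matrix $N$, and the hard part is proving $Y'$ is a normal variety (Cohen--Macaulayness via Lemma \ref{lem:CMvar}, then reducedness via flatness and exhibiting reduced fibres, as in Lemma \ref{lem:reduced}). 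Moreover, your claim that ``running the analogous cohomology for arbitrary classes locates the remaining rays'' fails: the restriction sequence only gives upper bounds of the form $h^0\bigl(Y,aH_2-bH_1\bigr)\le h^1\bigl(\P^1,\O(-b-d)\bigr)\cdot h^0\bigl(\P(\w),\O(a-e)\bigr)$, which do not vanish beyond the ray $\R_{\geq 0}(eH_2-H_1)$ (already for $2(eH_2-H_1)$ the bound is positive). The paper pins down $\mathrm{Eff}(Y)$ geometrically: under the modification $f\colon Y\DashedArrow Y'$ one computes $f_*(eH_2-H_1)=H_1'$, the pullback of $\O_{\P^{d-1}}(1)$, which is not big; since bigness is preserved under isomorphisms in codimension one, the ray is extremal. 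Cohomology alone cannot replace this step. (A smaller but real issue of the same kind affects your part (1): on the singular $X$ the sheaves $\O_X(a,b)$ are only reflexive, so the restriction sequence and K\"unneth for Weil classes are not automatic; the paper sidesteps this by pulling back to the smooth cover $\P^n\times\P^m$ and comparing $\K[x,y]/(f)$ with $\Cox(Y)$ via a Krull-dimension argument.)

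For part (3) you name the crux -- killing all effective classes on the boundary ray for the very general member -- and then write that you ``expect this to require a monodromy or general-position argument,'' which is a statement of the difficulty, not a proof. The paper never reproves such a statement in the weighted setting: it transfers Ottem's Theorem 5.6 from the smooth product along the finite surjective morphism $g\colon\P^1\times\P(\w)\to\P^1\times\P^m$, $y_i\mapsto y_i^{a(\w)/w_i}$; the class $\frac{me}{a(\w)}H_2'-dH_1'$ is pseudo-effective with $\kappa=-\infty$ on the very general smooth $Y'$, its pullback $meH_2-dH_1$ stays pseudo-effective, Proposition \ref{prop:Itakadim} (invariance of Iitaka dimension under finite surjective pullback) forces $\kappa=-\infty$ upstairs, and semicontinuity spreads the conclusion over the full linear series. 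This finite-morphism transfer, together with Okawa's theorem, is the unifying mechanism you are missing: it also disposes of part (4), where your plan (``exhibit an explicit special $f$ \dots and verify the Mori dream property directly'') is again a restatement of the goal. The paper takes the special member to be $g^*(Y')$ for $Y'$ a general $(d,1)$-hypersurface in $\P^1\times\P^m$, which is a projective bundle over $\P^1$ (the Brauer group of $\P^1$ is trivial and every vector bundle on $\P^1$ splits), hence toric, hence a Mori dream space, and then Okawa's Theorem 3.1 carries the property over to $Y$. Finally, your Hilbert-function plan for the Cox ring presentation in the $d=m$ case presupposes knowledge of every graded piece of $\Cox(Y)$, which is exactly what is not available; the paper instead deduces the presentation from Ottem's smooth-case Cox ring via the quotient morphism, Okawa's theorem, and a surjectivity-plus-Krull-dimension comparison.
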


If in Theorem \ref{thmA} the factors of  $\P^n(\v) \times \P^m(\w)$ are smooth, we recover part of the statement of \cite[Theorem 1.1]{Ottem15}. Also, in item (2), we notice that, as in the smooth case, we could conclude that the general member of $|\mathscr{O}_X(d,e)|$ is a Mori dream space by using a criterion by Ito (see\ \cite{Ito14}). However, we aim to describe explicitly the birational geometry of the general member of $|\mathscr{O}_X(d,e)|$, and the criterion of Ito does not provide information about that. It is worth noting that all the considered hypersurfaces have klt singularities.
\vspace{0.2cm}

We also have an analogous statement to Theorem \ref{thmA} for surfaces in $\P^1\times \P^2(\w)$, which parallels \cite[Proposition 1.2]{Ottem15}.

\begin{thm}\label{thmB}
Let $X$ be a product $\P^1 \times \P^2(\w)$. Suppose that $X$ is Gorenstein, so that by \cite[Section 2]{BMP11} we have either $X=\P^1 \times \P(1,1,2)$, or $X=\P^1 \times \P(1,2,3)$. Let $k$ be any positive integer.
\begin{enumerate}
\item If $X=\P^1 \times \P(1,1,2)$, for $d \geq 3$ the very general member of $|\mathscr{O}_X(d,2k)|$ is not a Mori dream space. Instead, the very general member $Y$ of $|\mathscr{O}_X(1,2)|$ and $|\mathscr{O}_X(2,2k)|$ is a Mori dream space. Moreover, if $Y \in |\mathscr{O}_X(2,2k)|$ and $Y=\{x_0^2f_0+x_0x_1f_1+x_1^2f_2=0\}$, we have
\[
\mathrm{Eff}(Y)=\mathrm{Nef}(Y)=\R_{\geq 0}H_1+\R_{\geq 0}(eH_2-H_1),
\] 
and
$$\mathrm{Cox}(Y)\cong \K[x_0,x_1,y_0,y_1,y_2,z_1,z_2]/I,$$ where $I=(f_0+x_0z_1,f_1-x_0z_1+x_1z_2,f_2-x_0z_2)$. 

\item If $X=\P^1 \times \P(1,2,3)$, the very general member of $|\mathscr{O}_{X}(d,6k)|$, with $d \geq 3$, is not a Mori dream space. Instead, the very general member of $|\mathscr{O}_{X}(2,6k)|$ is a Mori dream space. Moreover, if we set $Y=\{x_0^2f_0+x_0x_1f_1+x_1^2f_2=0\}$, we have
\[
\mathrm{Eff}(Y)=\mathrm{Nef}(Y)=\R_{\geq 0}H_1+\R_{\geq 0}(eH_2-H_1)
\] 
 and $$\mathrm{Cox}(Y)\cong \K[x_0,x_1,y_0,y_1,y_2,z_1,z_2]/I,$$
 where $I=(f_0+x_0z_1,f_1-x_0z_1+x_1z_2,f_2-x_0z_2)$. 
\end{enumerate}
\end{thm}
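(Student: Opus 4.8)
The proof runs parallel to that of Theorem~\ref{thmA} with $m=2$, the essential new feature being that since $\dim Y=2$ the Grothendieck--Lefschetz theorem is no longer available to pin down $N^1(Y)_{\R}$. The plan is therefore to first determine $\Cl(Y)$ and then to run the cone computations as in the higher-dimensional case. By the Gorenstein hypothesis $\P^2(\w)\in\{\P(1,1,2),\P(1,2,3)\}$ carries a single quotient singularity and $K_X$ is Cartier, so for general $Y\in|\O_X(d,e)|$ the surface is quasi-smooth, klt, $\Q$-factorial and satisfies $h^1(\O_Y)=0$. Writing $H_1=\O(1,0)|_Y$, $H_2=\O(0,1)|_Y$ and $\pi=p_2\colon Y\to\P^2(\w)$, I record the intersection numbers inherited from the product, namely $H_1^2=0$, $H_1\cdot H_2=e/w$, $H_2^2=d/w$ with $w=w_0w_1w_2\in\{2,6\}$, and, by adjunction, $K_Y=(d-2)H_1+(e-|\w|)H_2$ with $|\w|=\sum_i w_i\in\{4,6\}$.

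The first and principal obstacle is to prove $N^1(Y)_{\R}=\R H_1\oplus\R H_2$ for the \emph{very general} member. This is a Noether--Lefschetz statement, and in contrast to the case $\dim Y\ge 4$ it is sensitive to positivity: it holds precisely when $p_g(Y)=h^0(Y,K_Y)>0$, i.e.\ when $Y$ is a klt Calabi--Yau or general-type surface, where the monodromy/infinitesimal Noether--Lefschetz argument (carried out on a resolution, or in the orbifold category) forces the Picard number down to the rank of the ambient lattice. This covers every member of $|\O_X(2,6k)|$ in part~(2) and every member of $|\O_X(2,2k)|$ with $k\ge 2$ in part~(1). The remaining low-degree members are Fano: for $|\O_X(1,2)|$ one has $-K_Y=H_1+2H_2$ nef and big, and for $|\O_X(2,2)|$ one has $-K_Y=2H_2$ ample ($\pi$ being finite of degree $2$), so in both cases $Y$ is of Fano type and hence a Mori dream space by the theorem of Birkar--Cascini--Hacon--McKernan; here $N^1(Y)$ has rank larger than two, the two-dimensional cone formula being the content of the cases with $p_g(Y)>0$.

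For the Mori dream cases with $d=2$ and $\Cl(Y)=\Z H_1\oplus\Z H_2$, the key point is that the vanishing of $x_0^2f_0+x_0x_1f_1+x_1^2f_2$ on $Y$ says exactly that $(f_0,f_1,f_2)$ is a syzygy of $(x_0^2,x_0x_1,x_1^2)$; expressing it through the Koszul syzygies $x_1\mathbf{e}_i-x_0\mathbf{e}_{i+1}$ produces two sections $z_1,z_2\in H^0(Y,eH_2-H_1)$ together with the three relations generating $I$. I would then (i) identify $H^0(Y,eH_2-H_1)$ with $\langle z_1,z_2\rangle$ using the vanishings $h^0(X,\O_X(-1,\alpha))=0$ and $H^1(\P^2(\w),\O(j))=0$ in the twisted ideal sequence, and obtain the presentation of $\Cox(Y)$ by the generation argument of Theorem~\ref{thmA}; (ii) note that $H_1$ is semiample via $p_1\colon Y\to\P^1$, while $eH_2-H_1$ is base-point free, its base locus lying in $\{f_0=f_1=f_2=0\}=\varnothing$, and so defines a fibration $[z_1:z_2]\colon Y\to\P^1$; (iii) observe that for $d=2$ both classes are \emph{null}, since $H_1^2=0$ and $(eH_2-H_1)^2=e^2(d-2)/w=0$. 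As the intersection form has signature $(1,1)$, two nef null classes are forced to be the two extremal rays of both $\Nef(Y)$ and $\overline{\mathrm{Eff}}(Y)$; computing the dual cone from the Gram matrix then yields $\mathrm{Eff}(Y)=\Nef(Y)=\R_{\ge 0}H_1+\R_{\ge 0}(eH_2-H_1)$, and the two semiample generators supply all the Mori dream data.

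Finally, for $d\ge 3$ the class $eH_2-H_1$ becomes big, as $(eH_2-H_1)^2=e^2(d-2)/w>0$, and the two null rays of the form are now $H_1$ and $2eH_2-dH_1$. The plan is to show that no irreducible curve lies outside the positive cone, so that $\overline{\mathrm{Eff}}(Y)=\Nef(Y)$ is exactly the positive cone with extremal rays $H_1$ and $2eH_2-dH_1$, and that the boundary class $2eH_2-dH_1$ is nef but \emph{not effective} for the very general $Y$; this gives $\mathrm{Eff}(Y)\subsetneq\overline{\mathrm{Eff}}(Y)$ and hence that $Y$ is not a Mori dream space. The non-effectivity of $2eH_2-dH_1$ is the hard part: it fails on special members (cf.\ Theorem~\ref{thmA}(4)), so it genuinely uses the very-general hypothesis, and I would establish it by the specialisation/monodromy argument of Theorem~\ref{thmA}(3), showing that no positive multiple of $2eH_2-dH_1$ acquires a section. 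The crux of the whole theorem is thus the pair of very-general inputs unavailable in dimension two, namely Noether--Lefschetz for $\Cl(Y)$ and the non-semiampleness of the boundary ray, both of which break for special members and so lie genuinely deeper than their Lefschetz-theoretic analogues in Theorem~\ref{thmA}.
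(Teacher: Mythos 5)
Your proposal is correct in substance and shares its backbone with the paper's proof, but it diverges at several points, mostly to its advantage. Both arguments rest on a Noether--Lefschetz statement for class groups of normal threefolds to force $\Cl(Y)\cong\Z H_1\oplus\Z H_2$ for the very general member exactly when the adjoint bundle has sections: you phrase this as $p_g(Y)>0$ and propose to run the monodromy/infinitesimal argument yourself, whereas the paper simply checks the hypotheses of \cite[Theorem 1]{RS09}; since Noether--Lefschetz for the \emph{class group} of a normal threefold is precisely that theorem and not a routine adaptation of the smooth case, you should cite it rather than re-derive it. For the low-degree members $|\O_X(1,2)|$ and $|\O_X(2,2)|$ you argue that $Y$ is klt of Fano type and invoke \cite{BCHM10}, where the paper pulls $Y$ back to del Pezzo surfaces of degree $5$ and $2$ under $\P^1\times\P^2\to\P^1\times\P(1,1,2)$ and descends the Mori dream property via \cite[Theorem 1.1]{Okawa}; both work, and your observation that in these cases the rank of $\Cl(Y)$ exceeds two (so the displayed cone and Cox formulas really pertain to the cases with $p_g(Y)>0$, i.e.\ $k\geq 2$ in part (1)) is a precision that the paper's own proof also implicitly makes, since for $e=2$ it only establishes the Mori dream property. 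For the cone description when $d=2$, your argument --- two independent nef classes $H_1$ and $eH_2-H_1$ of square zero on a $\rho=2$ surface must span $\Nef(Y)=\overline{\mathrm{Eff}}(Y)$ by the signature theorem, and effectivity of the semiample generators closes the effective cone --- is a clean surface-theoretic alternative to the paper's route, which re-runs the degeneracy-locus machinery of Proposition \ref{prop:genmembermds} and Corollary \ref{cor1}; the Cox ring step is the same in both (reduction to Ottem's presentation on the smooth cover, as in Proposition \ref{prop:coxring}). Finally, for $d\geq 3$ your plan carries one superfluous and unsupported step: proving that no irreducible curve lies outside the positive cone, so that $\overline{\mathrm{Eff}}(Y)$ equals the positive cone, is not needed and would itself require a countability/very-generality argument to exclude negative curves. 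The paper's Proposition \ref{prop:notMDS} (the proof of Theorem \ref{thmA}(3) you appeal to) is not a monodromy argument on $Y$ but a pull-back one: it transports Ottem's very general non-Mori-dream surface in $\P^1\times\P^2$ through the finite morphism $g(x_0,x_1,y_0,y_1,y_2)=(x_0,x_1,y_0^{a(\w)/w_0},y_1^{a(\w)/w_1},y_2^{a(\w)/w_2})$, which at once makes $2eH_2-dH_1$ nef (pull-back of a nef class) and of Iitaka dimension $-\infty$ (Proposition \ref{prop:Itakadim}), and then uses semicontinuity to pass to the very general member; this single step already yields $\mathrm{Eff}(Y)\subsetneq\overline{\mathrm{Eff}}(Y)$ and hence the failure of the Mori dream property, so you should replace your two-step cone analysis by it.
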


Note that in Theorem \ref{thmB} there are cases where we could not make any statements about the general, very general, or even special member of the considered linear series. Specifically, for $X=\P(1,1,2)$, the case $|\mathscr{O}_{X}(1,4k)|$ remains open and for $X=\P(1,2,3)$, the case $|\mathscr{O}_{X}(1,6k)|$ is still unresolved, for any $k > 0$. In both instances, it seems that the very general member is not a Mori dream space when $k$ is sufficiently large, similar to the behaviour observed for the blow-up of the projective plane at $k\geq 9$ points in general position.
\medskip

In the proofs of Theorems \ref{thmA}, \ref{thmB}, we use degeneracy loci of morphisms of vector bundles to establish the existence (and the uniqueness) of the corresponding small $\Q$-factorial modification. We make use of results by Okawa in \cite{Okawa} and by Ottem in \cite{Ottem15} to compute the various Cox rings, and further apply Ottem's results to show that for large values of $d$, the very general member of $|\O_{\P^1\times \P(\w)}(d,e)|$ is not a Mori dream space.
\medskip

As it happens in the smooth case (see Ottem's \cite[Proposition 6.1]{Ottem15}), we expect that the situation becomes more complicated when we consider hypersurfaces in products of more than two weighted projective spaces. 

\begin{prop}\label{propCY}
Suppose that $$X=(\P^1)^{n}\times \P^{m_1}(\w^1)\times \cdots \times \P^{m_k}(\w^k)\to \mathrm{Spec}(\C)$$ is Gorenstein of dimension $\mathrm{dim}(X)\geq 4$, where $m_i\geq 2$ for any $i=1,\dots,k$. Then the general member $Y\in |-K_X|$ is a Calabi-Yau hypersurface and we have $$\mathrm{Nef}(Y)=\R_{\geq 0}H_1+\R_{\geq 0}H_2+\cdots+\R_{\geq 0}H_{n+k}.$$ Moreover, the following hold:
\begin{enumerate}
\item If $n=0$, then $\mathrm{Eff}(Y)=\mathrm{Mov}(Y)=\mathrm{Nef}(Y)=\mathrm{Eff}(X)_{|Y}$ and $Y$ is a Mori dream space.
\item If $n=1$, the effective cone of $X$ strictly contains that of $Y$, and also in this case $Y$ is a Mori dream space.
\item If $n>1$, then $Y$ is not a Mori dream space. In particular, $\mathrm{Bir}(Y)$ is infinite. 
\end{enumerate}
\end{prop}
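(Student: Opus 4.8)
The plan is to transport every cone into the common lattice $N^1(X)_{\R}=\bigoplus_i\R H_i$ and then to let the number $n$ of $\P^1$-factors decide the three cases, the mechanism being the covering involutions that the degree-two $\P^1$-directions produce. First I would check that a general $Y\in|-K_X|$ is a normal, terminal Calabi--Yau: since $X$ is Gorenstein, $-K_X$ is Cartier, and it is base-point free because it restricts to $\O(2)$ on each $\P^1$ and to a multiple of $\O(a(\w^i))$ on each $\P^{m_i}(\w^i)$; Bertini plus the adjunction formula $K_Y=(K_X+Y)_{|Y}=\O_Y$ then give the claim, while the sequence $0\to\omega_X\to\O_X\to\O_Y\to0$ together with $H^{>0}(X,\O_X)=0$ yields $h^1(\O_Y)=0$. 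A Grothendieck--Lefschetz theorem for the $(\dim\geq4)$ ambient (as set up earlier in the paper) identifies $N^1(Y)_{\R}\cong N^1(X)_{\R}$, so all relevant cones sit in $\R^{n+k}$ and $\mathrm{Eff}(X)_{|Y}$ is the non-negative orthant.

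For $\Nef(Y)=\sum_i\R_{\geq0}H_i$ I would prove both inclusions. Each $H_i$ restricts a semiample class, so the orthant is contained in $\Nef(Y)$; for the reverse I produce, for every $i$, an irreducible curve $C_i\subset Y$ with $H_i\cdot C_i>0$ and $H_j\cdot C_i=0$ for $j\neq i$, so that pairing a nef class $\sum a_iH_i$ against $C_i$ forces $a_i\geq0$. For a weighted factor ($m_i\geq2$) I take $C_i$ inside a fibre of $p_i|_Y$, which is a positive-dimensional hypersurface in $\P^{m_i}(\w^i)$. For a $\P^1$-factor the general fibre of $p_{\hat\ell}|_Y$ is only two points, so instead I take the whole $\P^1$ lying over a point of the degeneracy locus $B_\ell=\{g_0=g_1=g_2=0\}\subset X_{\hat\ell}$, where $g_0,g_1,g_2\in H^0(-K_{X_{\hat\ell}})$ are the coefficients of the binary quadric cutting out $Y$; here $B_\ell\neq\emptyset$ precisely because $\dim X_{\hat\ell}\geq3$, i.e. $\dim X\geq4$. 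This is the first place where the dimension hypothesis is used.

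Next I would analyse the involutions. Each $\P^1$-factor makes $p_{\hat\ell}|_Y\colon Y\to X_{\hat\ell}$ generically $2:1$, giving a covering involution $\iota_\ell\in\Bir(Y)$ acting on $N^1(Y)$ by the reflection $\iota_\ell^*H_\ell=(-K_{X_{\hat\ell}})_{|Y}-H_\ell$ and $\iota_\ell^*H_j=H_j$ ($j\neq\ell$). If $n=0$ there are no such involutions, and a restriction-and-cohomology computation analogous to the two-factor rigid case shows $\mathrm{Eff}(Y)=\mathrm{Mov}(Y)=\Nef(Y)=\mathrm{Eff}(X)_{|Y}$, so $Y$ is a Mori dream space with no nontrivial small modification. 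If $n=1$, the single order-two involution enlarges the effective cone by the ray $\iota_1^*H_1$ lying outside the orthant (accounting for the discrepancy between $\mathrm{Eff}(Y)$ and $\mathrm{Eff}(X)_{|Y}$ in (2)), while $\Mov(Y)=\Nef(Y)\cup\iota_1^*\Nef(Y)$ consists of two chambers and $Y$ is still a Mori dream space. If $n\geq2$, restricting the action of $\iota_1,\iota_2$ to the $\langle H_1,H_2\rangle$-coordinates gives the matrices $\left(\begin{smallmatrix}-1&0\\2&1\end{smallmatrix}\right)$ and $\left(\begin{smallmatrix}1&2\\0&-1\end{smallmatrix}\right)$, whose product $\left(\begin{smallmatrix}-1&-2\\2&3\end{smallmatrix}\right)$ is unipotent and $\neq\mathrm{id}$, hence of infinite order; thus $\langle\iota_1,\dots,\iota_n\rangle\subset\Bir(Y)$ is infinite and the orbit of $\Nef(Y)$ fills $\Mov(Y)$ with infinitely many nef chambers, so $Y$ is not a Mori dream space.

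The main obstacle is the non-Mori-dream half, in two pieces. First, I must verify that the $\iota_\ell$ are genuine pseudo-automorphisms and not merely birational maps: the only loci they fail to be biregular on are the $\P^1$'s over $B_\ell$, and I would estimate that their union has codimension two in $Y$ (general $g_j$ make $\mathrm{codim}\,B_\ell=3$ in $X_{\hat\ell}$, so the contracted locus has dimension $\dim Y-2$) — this is exactly what $\dim X\geq4$ buys. Second, I must convert ``infinite pseudo-automorphism group'' into ``infinitely many maximal nef chambers'': since $\Nef(Y)$ is rational polyhedral its stabiliser in $\langle\iota_1,\dots\rangle$ is finite, so the infinite-order element $\iota_1\iota_2$ produces infinitely many distinct translates of $\Nef(Y)$, contradicting condition (3) in the definition of a Mori dream space, in the spirit of Ottem's \cite[Proposition 6.1]{Ottem15}.
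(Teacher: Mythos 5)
Your route is the same as the paper's: Grothendieck--Lefschetz identification of $N^1(Y)$ with $N^1(X)$; the nef cone determined by pairing against curves in fibres, with the whole $\P^1$ over a point of $\{f_0=f_1=f_2=0\}$ supplying the contracted curve for each $\P^1$-factor (this is exactly where the paper also uses $\dim X\geq 4$); item (1) by the restriction sequence and K\"unneth; items (2) and (3) via the covering involutions $\iota_\ell$. One point where you actually improve on the paper: for item (3) the paper argues that two distinct involutions \q{do not commute, and hence generate an infinite subgroup}, which is not by itself conclusive (two non-commuting involutions generate a dihedral group, which can be finite); your computation that $\iota_1^*\iota_2^*$ is unipotent and nontrivial on the quotient of $N^1(Y)_{\R}$ by the span of $H_3,\dots,H_{n+k}$ (a subspace invariant under both involutions) genuinely produces an element of infinite order, and your finite-stabiliser argument for a rational polyhedral cone then correctly rules out the Mori dream property.

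The genuine gap is in item (2). You assert that $\Mov(Y)=\Nef(Y)\cup\iota_1^*\Nef(Y)$ \q{consists of two chambers and $Y$ is still a Mori dream space}, but you never argue the inclusion $\Mov(Y)\subseteq\Nef(Y)\cup\iota_1^*\Nef(Y)$, i.e.\ that no movable class escapes these two chambers and that $\iota_1$ is, up to automorphisms, the only small $\Q$-factorial modification of $Y$; this is precisely condition (3) in the definition of a Mori dream space and is the heart of the paper's item-(2) proof. The paper establishes it by noting that $Y$ is a klt Calabi--Yau, so by Birkar every small $\Q$-factorial modification of $Y$ is a composition of flops; it then shows that $\pi_1\colon Y\to\prod_i\P(\w^i)$ is the unique flopping contraction (all other extremal contractions are fibrations onto lower-dimensional varieties), that its flop is $\iota_1$, and that an MMP for any movable $D$ therefore terminates after the single step $\iota_1$, since $\iota_1$ is an involution. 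If you want to avoid this MMP machinery, your own tools suffice, but the argument still has to be made: your covering families of curves show that any pseudo-effective class $D=\sum_i a_iH_i$ satisfies $a_i\geq 0$ for every weighted index $i\geq 2$; applying the same to $\iota_1^*D$ (pseudo-effectivity is preserved because $\iota_1$ is an isomorphism in codimension one, which you did verify) gives $a_i+a_1c_i\geq 0$, where $\iota_1^*H_1=\sum_{i\geq 2}c_iH_i-H_1$; and these inequalities cut out exactly the cone $\Nef(Y)\cup\iota_1^*\Nef(Y)$, yielding $\mathrm{Eff}(Y)=\Mov(Y)=\Nef(Y)\cup\iota_1^*\Nef(Y)$ and hence the Mori dream property. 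As written, however, this step of item (2) is asserted rather than proved.
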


Still, we show that under the terminality assumption, the Calabi-Yau hypersurfaces considered in Proposition \ref{propCY} satisfy the birational form of the Kawamata-Morrison cone conjecture. Recall that a normal and $\Q$-factorial complex projective variety $X$ is called Calabi-Yau if $K_X \equiv 0$, and $h^1(X,\mathscr{O}_X)=0$. The (birational) Kawamata-Morrison cone conjecture for a terminal Calabi-Yau variety $X$ predicts the existence of a fundamental domain for the action of $\mathrm{Bir}(X)$ on $\overline{\text{Mov}}^+(X)$. In essence, the conjecture posits that $\overline{\text{Mov}}^+(X)$ is potentially rational polyhedral. If this is not the case, the cause lies with the group \(\mathrm{Bir}(X)\) of birational self-maps of $X$, which, in such circumstances, is infinite. The infinite nature of the group action results in the fundamental domain being shifted across the entire cone $\overline{\text{Mov}}^+(X)$, leading to a more intricate and non-polyhedral structure. We refer the reader to Section \ref{preliminaries} for the definition of $\overline{\text{Mov}}^+(X)$.

\begin{thm}\label{thmC}
Let $X$ be the complex projective variety $(\P^1)^n \times \left(\prod_{i=1}^k \P^{m_i}(\w^i)\right) \to \mathrm{Spec}(\C)$, with $n\geq 0$. Suppose that $X$ is Gorenstein and terminal. Then the general member $Y$ of  $|-K_X|$ verifies the birational form of the Kawamata-Morrison cone conjecture. Moreover, $\overline{\mathrm{Mov}}^+(Y)=\mathrm{Mov}(Y)$.
\end{thm}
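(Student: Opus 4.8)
The plan is to split into cases according to the number $n$ of $\P^1$-factors, which by Proposition \ref{propCY} governs whether $Y$ is a Mori dream space. For $n=0$ and $n=1$ the variety $Y$ is a Mori dream space, so $\overline{\Mov}(Y)$ is rational polyhedral; then $\Bir(Y)$ acts on it through a finite group, a rational polyhedral fundamental domain exists trivially, and since every extremal ray of a rational polyhedral cone is rational one immediately gets $\overline{\Mov}^+(Y)=\Mov(Y)$. The entire content is therefore the case $n\geq 2$, where $Y$ is not a Mori dream space and $\Bir(Y)$ is infinite, and the strategy is to exhibit an explicit infinite reflection group inside $\Bir(Y)$ for which $\Nef(Y)$ is a fundamental domain on $\overline{\Mov}^+(Y)$.

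First I would produce the reflections. Since $X$ is Gorenstein and $-K_{\P^1}=\O_{\P^1}(2)$, the anticanonical class has degree $2$ in each $\P^1$-direction, so $Y\in|-K_X|$ meets a general fibre of the projection $\pi_j\colon Y\dashrightarrow (\P^1)^{n-1}\times\prod_i\P^{m_i}(\w^i)$ forgetting the $j$-th $\P^1$ in two points. Each $\pi_j$ is thus generically finite of degree $2$, and its covering involution defines a birational self-map $\iota_j\in\Bir(Y)$; as $K_Y\equiv 0$, each $\iota_j$ is a pseudo-automorphism (an isomorphism in codimension one). Using the Lefschetz-type identification $N^1(Y)=N^1(X)=\langle H_1,\dots,H_{n+k}\rangle$, valid because $\dim(Y)\geq 3$ and $Y$ is klt, I would compute $\iota_j^*$: it fixes every class pulled back from the base and sends $H_j$ to $-H_j+(-K_X-2H_j)_{|Y}=-H_j+\sum_{l\neq j,\ l\leq n}2H_l+\sum_i a(\w^i)H_{n+i}$, where $a(\w^i)=\sum_l w^i_l$. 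Hence each $\iota_j^*$ is the reflection fixing the coordinate hyperplane $\{x_j=0\}$, and a direct $2\times 2$ computation shows that $\iota_a\iota_b$ is unipotent of infinite order for $a\neq b$, so $\Gamma:=\langle\iota_1,\dots,\iota_n\rangle$ is the universal Coxeter group, i.e.\ the free product of $n$ copies of $\Z/2$; note that $\Gamma$ acts trivially on the subspace $\langle H_{n+1},\dots,H_{n+k}\rangle$ coming from the rigid factors $\P^{m_i}(\w^i)$.

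Next I would run the Lorentzian reflection-group argument in the spirit of Cantat--Oguiso. On $\langle H_1,\dots,H_n\rangle$ the reflections preserve the form $q$ with $q(H_i)=0$ and $q(H_i,H_j)=2$ for $i\neq j$, which has signature $(1,n-1)$, so $\Gamma$ acts on the associated hyperbolic space as a discrete reflection group with the positive orthant as Weyl chamber. The key step is to show that the $\Gamma$-orbit of $\Nef(Y)=\sum_i\R_{\geq 0}H_i$ (the positive orthant, by Proposition \ref{propCY}) tiles the cone spanned by the rational closure of the positive cone of $q$ and the simplicial cone $\sum_i\R_{\geq 0}H_{n+i}$. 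Because each $\iota_j$ is a pseudo-automorphism, $\gamma^*\Nef(Y)$ is the nef cone of an SQM of $Y$ for every $\gamma\in\Gamma$, so these translates are genuine chambers of $\Mov(Y)$; the Tits-cone description of the universal Coxeter group then identifies their union with $\overline{\Mov}^+(Y)$ and makes $\Nef(Y)$ a rational polyhedral fundamental domain, proving the cone conjecture. If $\Bir(Y)$ is strictly larger than $\Gamma$, the remaining elements arise from the finite group of automorphisms of $X$ preserving $Y$, which stabilise $\Nef(Y)$, so passing to a fundamental domain for this finite action inside $\Nef(Y)$ still yields a rational polyhedral fundamental domain for the full group.

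The hard part will be the exact identification $\overline{\Mov}^+(Y)=\Mov(Y)=\bigcup_{\gamma\in\Gamma}\gamma^*\Nef(Y)$, i.e.\ showing that no movable class lies outside the reflection tiling and that every rational point of $\overline{\Mov}(Y)$ is actually movable. One inclusion is that the orbit is contained in $\Mov(Y)$; the reverse requires bounding $\overline{\Mov}(Y)$ inside the positive cone of $q$, so that the boundary not reached by the tiling is exactly the irrational part of the light cone, which is excluded from $\overline{\Mov}^+$ by definition, and it requires ruling out SQMs not coming from $\Gamma$. This is precisely where the universal-Coxeter-group structure is essential: its Tits cone is the convex hull of the rational points of the closed positive cone, which lets me match the group-theoretic tiling with the birational chamber decomposition and deduce simultaneously the fundamental-domain statement and the equality $\overline{\Mov}^+(Y)=\Mov(Y)$.
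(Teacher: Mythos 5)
Your construction of the involutions $\iota_j$ as covering involutions of the degree-two projections, and the computation of their action on $N^1(Y)_{\R}$, coincide with the paper's, and the reduction for $n\leq 1$ (Mori dream space, finite action on a rational polyhedral cone) is fine. But for $n\geq 2$ the proposal has a genuine gap at exactly the point you label \q{the hard part}: the inclusion $\Mov(Y)\subseteq\bigcup_{\gamma\in\Gamma}\gamma^*\Nef(Y)$, and the claim that $\Bir(Y)$ is generated by $\Gamma$ together with a finite group stabilising $\Nef(Y)$, cannot come out of the Coxeter/Tits-cone formalism. Reflection-group theory tells you that the $\Gamma$-translates of the orthant tile the Tits cone of $\Gamma$, but it says nothing about which divisor classes on $Y$ are movable, nor about which small $\Q$-factorial modifications $Y$ actually admits: a priori $\Mov(Y)$ could contain chambers outside the $\Gamma$-orbit, and $\Bir(Y)$ could contain elements not accounted for by $\Gamma$ and automorphisms. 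The paper closes this gap with birational geometry rather than group theory: given a big movable class $D$, it runs a $(Y,\epsilon D)$-MMP with scaling (\cite[Theorem 1.2]{BCHM10}), which is a sequence of flops since $K_Y\equiv 0$; it then classifies \emph{all} flopping contractions of $Y$ — using $\Nef(Y)=\Nef(X)_{|Y}$ from Proposition \ref{propCY}, every extremal contraction of $Y$ is induced by a projection of $X$, and the only small ones are the $\pi_l$ — and identifies the flop of $\overline{\pi}_l$ with $\iota_l$. Hence every step of the MMP is one of your involutions, giving $\mathrm{int}(\Mov(Y))\subseteq\Bir(Y)\cdot\Nef(Y)$. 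Likewise, $\Bir(Y)=\langle \iota_l,\mathrm{Aut}(Y)\rangle$ is deduced from terminality of $Y$ (\cite[Lemma 5.17]{KM98}), \cite[Corollary 3.54]{KM98}, and Kawamata's theorem that birational maps between terminal Calabi-Yau varieties are compositions of flops — none of which your proposal invokes. With these inputs, Looijenga's results (\cite[Proposition-Definition 4.1, Application 4.14]{Looijenga14}) replace the hyperbolic-geometry argument entirely and yield both $\Bir(Y)\cdot\Nef(Y)=\overline{\Mov}^+(Y)=\Mov(Y)$ and the rational polyhedral fundamental domain.

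There is also a technical flaw in your Lorentzian setup itself: the reflections $\iota_j^*$ do \emph{not} preserve the subspace $\langle H_1,\dots,H_n\rangle$ on which you define $q$ — by your own formula, $\iota_j^*(H_j)$ has nonzero components along $H_{n+1},\dots,H_{n+k}$ whenever $k\geq 1$. So the reflection group acts by the standard Cantat--Oguiso matrices only on the quotient $N^1(Y)_{\R}/\langle H_{n+1},\dots,H_{n+k}\rangle$, and your proposed description of $\overline{\Mov}^+(Y)$ as the join of the rational closure of the $q$-positive cone with $\sum_i\R_{\geq 0}H_{n+i}$ is a statement in the full Néron--Severi space that does not follow from the quotient picture; verifying it would in any case again require knowing $\Mov(Y)$ in advance, i.e.\ the MMP input above. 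The argument you are transplanting is genuinely a $k=0$ argument.
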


We point out that one can construct terminal products of weighted projective spaces following the work \cite{Kasp13} of Kasprzyk.
\vspace{0.2cm}

The Kawamata-Morrison cone conjecture for Calabi-Yau hypersurfaces, or Calabi-Yau complete intersections in products of projective spaces, has been investigated by several people. Our theorem provides a small contribution in the singular case to this. Oguiso and Cantat proved in \cite{CO15} the birational form of the cone conjecture for general anti-canonical hypersurfaces in $(\P^1)^M$, for $M\geq 4$. In his Master's thesis \cite{Skauli17}, Skauli generalised the result of Oguiso and Cantat to certain general Calabi-Yau complete intersections in $(\P^n)^M$. To conclude, in \cite{Yanez22}, Yáñez further generalised the result of Cantat and Oguiso (and also that of Skauli) to some general Calabi-Yau complete intersections in products of projective spaces of possibly different dimensions.  

\section*{Organisation of the paper}
$\bullet$ In Section \ref{preliminaries} we gather all the necessary notions and results that will be used throughout the paper.

$\bullet$ In Section \ref{hypersurfaces}, we study the birational geometry of Cartier hypersurfaces in products of the form $\P^n(\v) \times \P^m(\w)$. In particular, we prove Theorem \ref{thmA} and Theorem \ref{thmB}. Subsection \ref{toricvariety} provides a detailed description of the birational geometry of the toric variety that governs certain hypersurfaces in products of the type $\P^1 \times \P(\w)$.

$\bullet$ Section \ref{morespaces} addresses the case of hypersurfaces in products of more than two weighted projective spaces. We observe that even hypersurfaces with low multidegree may fail to be Mori dream spaces. Some of these examples include Calabi-Yau anticanonical hypersurfaces, for which we verify the birational version of the Kawamata-Morrison cone conjecture.

\section*{Acknowledgments}
I thank J.\ C.\ Ottem for answering questions via email and A.\ Massarenti for useful discussions and suggestions. I also thank V.\ Benedetti, E.\ Fatighenti, S.\ Filipazzi, A.\ Petracci, F.\ Polizzi, and N.\ Tsakanikas for additional helpful discussions. Finally, I thank E.\ Floris for comments on a preliminary version of the article, and F.\ Giovenzana and F.\ Rizzo for the interest they took in this work.

\section{Preliminaries}\label{preliminaries}

Let $X$ be a normal projective variety of dimension $d$ over $\K$. The Néron-Severi space of $X$ is the finite-dimensional vector space $N^1(X)_{\R}:=\mathrm{Div}(X)\otimes \R/\equiv$, where $\mathrm{Div}(X)$ is the group of integral Cartier divisors of $X$, and $\equiv$ is the numerical equivalence relation. The elements of $\mathrm{Div}(X)\otimes \R$ (resp. $\mathrm{Div}(X)\otimes \Q$) are called Cartier $\R$-divisors (resp.\ $\Q$-divisors). Two Cartier $\R$-divisors $D,D'$ are numerically equivalent if $D\cdot C=D'\cdot C$ for any irreducible curve $C$ of $X$.
\medskip

The class group $\mathrm{Cl}(X)$ of $X$ is the group $\mathrm{WDiv}(X)$ of Weil divisors on $X$, modulo the linear equivalence relation. 
\medskip

\begin{defn}\label{Coxring}
Let $X$ be a normal and $\Q$-factorial projective variety. Suppose that the class group $\mathrm{Cl}(X)$ of $X$ is free and finitely generated. Moreover, let $\Gamma$ be a subgroup of $\text{WDiv}(X)$, such that the quotient map $\Gamma\to \mathrm{Cl}(X)$ is an isomorphism. We define the Cox ring of $X$ as
\[
\mathrm{Cox}(X):=\bigoplus_{D\in \Gamma}H^0(X,\O_X(D)),
\] 
where $\O_X(D)$ denotes the reflexive sheaf associated with the Weil divisor $D$. The ring structure is given by the tensor product of global sections. Moreover, there is a natural grading on $\mathrm{Cox}(X)$, induced by $\Gamma$, as follows: an element of  $H^0(X,\O_X(D))$ has degree $D$.
\end{defn}

The Cox ring of a normal projective variety encodes a lot of information about the variety, and under certain assumptions detects whether the considered variety is a Mori dream space. Indeed, according to \cite[Proposition 2.9]{HK00}, a normal and $\Q$-factorial projective variety $X$ is a Mori dream space if and only if its Cox ring is a finitely generated $\K$-algebra. For an excellent account of Cox rings we refer the reader to \cite{ADHL14}.

\begin{defn}\label{SQM}
A small $\Q$-factorial modification of a normal and $\Q$-factorial projective variety $X$ is a birational map $f\colon X \DashedArrow X'$ to another normal and $\Q$-factorial projective variety $X'$ that is an isomorphism in codimension $1$. This means that $f$ (resp.\ $f^{-1}$) restricts to an isomorphism $f \colon U \to U'$ (resp.\ $f^{-1} \colon U' \to U$), where $U$ and $U'$ are open subsets whose complements have codimension greater than or equal to $2$.
\end{defn}

\begin{defn} Let $L$ be a line bundle on $X$. For any $k$, let $\phi_k = \phi_{|L^{\otimes k}|} \colon X \DashedArrow \P(H^0(X,L))$ be the rational map induced by $L^{\otimes k}$, and denote by $\mathrm{im}(\phi_k)$ the image of $\phi_k$, that is, the image of the closure of $\Gamma(\phi_k) \subset X \times  \P(H^0(X,L))$ via the projection $ X \times  \P(H^0(X,L)) \to \P(H^0(X,L))$. We define the Itaka dimension of $L$ as
\[
\kappa(L):=\sup\left\{\dim(\mathrm{im}(\phi_k))\;|\;k>0 \text{ and } h^0(X,L^{\otimes k})>0\right\}.
\]
If $D$ is any integral Cartier divisor on $X$, we set $\kappa(D):=\kappa(\mathscr{O}_X(D))$.
\end{defn}

An integral Cartier divisor $D$ on $X$ is big if $\kappa(\mathscr{O}_X(D))=d$, i.e.\ if its Itaka dimension is maximal. Equivalently, there exists a constant $C$ such that $h^0(X,\mathscr{O}_X(mD))\geq C\cdot m^d$, for every $m\gg 0$ such that $h^0(X,\mathscr{O}_X(mD))\neq 0$.
\medskip

Now, let $V$ be a finite-dimensional $\R$-vector space. A cone in $V$ is a subset $K$ such that if $v\in K$, also $\lambda \cdot v$ belongs to $K$, where $\lambda$ is any positive real number. We say that $K$ is convex if it is convex as a subset of $V$. A convex cone is polyhedral if it is spanned by finitely many vectors in $V$. A rational polyhedral cone in $V$ is a convex cone spanned by finitely many rational vectors.
\medskip
 

The effective cone of $X$ is defined as the cone spanned by effective Cartier divisor classes and is denoted by $\mathrm{Eff}(X)$. The pseudo-effective cone is the closure of the effective cone in $N^1(X)_{\R}$ and is denoted by $\overline{\mathrm{Eff}}(X)$. The nef cone $\mathrm{Nef}(X)$ of $X$ is the cone in the Néron-Severi space of divisor classes $\alpha$ such that $\alpha\cdot C\geq 0$, for any integral curve $C$ in $X$. For an excellent account of divisors and cones in the Néron-Severi space, we refer the reader to \cite{Laz}.

\begin{defn}\label{movablecone}
Let $X$ be a normal projective variety. The movable cone $\text{Mov}(X)$ of $X$ is the cone in the Néron-Severi space spanned by movable divisor classes, i.e.\ the classes of integral Cartier divisors whose stable base locus has codimension greater than or equal to 2. Moreover, we define $\overline{\mathrm{Mov}}^{+}(X)$ as the convex hull of $\overline{\mathrm{Mov}}(X) \cap N^1(X)_{\Q}$ in $N^1(X)_{\R}$.
\end{defn}

 Let $X$ be a terminal Calabi-Yau variety, and $\mathrm{Bir}(X)$ the group of its birational self-maps. Consider the representation map $\rho \colon \mathrm{Bir}(X)\to \mathrm{GL}(N^1(X)_{\R})$. We say that a rational polyhedral cone $\Pi \subset \overline{\text{Mov}}^+(X)$ is a fundamental domain for the action of $\mathrm{Bir}(X)$ on $\overline{\mathrm{Mov}}^+(X)$ if 
 $$\mathrm{Bir}(X)\cdot \Pi:=\bigcup_{g \in \rho(\mathrm{Bir(X)})}g(\Pi)=\overline{\mathrm{Mov}}^+(X),$$ 
 and for any $g\in \rho(\mathrm{Bir}(X))$, we have $\mathrm{int}(\Pi)\cap \mathrm{int}(g(\Pi))=\emptyset$, unless $g$ is the identity.
\medskip

\begin{defn}
Let $X$ be a projective variety and $\mathscr{E}$ a vector bundle (i.e.\ a locally free $\mathscr{O}_X$-module of constant rank). We define $V(\mathscr{E}):=\mathcal{S\!pec}_{\O_X}(\mathrm{Sym}(\mathscr{E}^{\vee}))$, i.e.\ $V(\mathscr{E})\to X$ is the geometric vector bundle associated with $\mathscr{E}$. 
\end{defn}

\begin{defn}
Let $X$ be a projective variety, and $\varphi \colon \mathscr{E} \to \mathscr{F}$ a morphism of vector bundles on $X$, with $\mathrm{rk}(\mathscr{E})\geq \mathrm{rk}(\mathscr{F})$. We know that $\varphi \in H^0(X,\mathscr{E}^{\vee}\otimes \mathscr{F})$. Let $k$ be a positive integer, and consider 
\[
\wedge^k \varphi\in H^0(\wedge^k{\mathscr{E}^{\vee}}\otimes \wedge^k{\mathscr{F}}).
\]
The section $\wedge^k \varphi$ defines a morphism of vector bundles 
\[
h \colon \mathscr{O}_X \to \wedge^k{\mathscr{E}^{\vee}}\otimes \wedge^k{\mathscr{F}}.
\]
 If we dualise $h$, we obtain 
 \[
 h^{\vee} \colon (\wedge^k{\mathscr{E}^{\vee}}\otimes \wedge^k{\mathscr{F}})^{\vee} \to \mathscr{O}_X.
 \]
 Then we define the $k$-th degeneracy locus of $\varphi$ as the closed subscheme $D_k(\varphi)$ defined by the quasi-coherent sheaf of ideals $\mathscr{I}=\mathrm{im}(h^{\vee})$. Set-theoretically, $D_k(\varphi)$ consists of the (closed) points $x$ of $X$ where the induced morphism $\varphi_x \colon \mathscr{E}_x/\mathfrak{m}_x \mathscr{E}_x \to \mathscr{F}_x/\mathfrak{m}_x \mathscr{F}_x$ between the fibres has rank smaller than or equal to $k$.
\end{defn}

The following proposition tells us that the Itaka dimension of line bundles on normal projective varieties behaves well under surjective morphisms of normal projective varieties.

\begin{prop}[Proposition 1.5, \cite{Mori87}]\label{prop:Itakadim}
Let $f \colon X \to Y$ be a surjective morphism between normal projective varieties and $L$ a line bundle on $Y$. Then $\kappa(Y,L)=\kappa(X,f^*L)$.
\end{prop}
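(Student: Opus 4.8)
The plan is to reduce the statement to two inequalities and to treat the non-trivial one via the Stein factorisation of $f$. The inequality $\kappa(X,f^*L)\geq \kappa(Y,L)$ is the elementary one: since $f$ is dominant and $X,Y$ are integral, the pullback $f^*\colon H^0(Y,L^{\otimes k})\to H^0(X,f^*L^{\otimes k})$ is injective for every $k$. For a $k$ with $h^0(Y,L^{\otimes k})>0$, the subsystem $f^*H^0(Y,L^{\otimes k})$ induces the rational map $\phi_{|L^{\otimes k}|}\circ f$, whose image is $\overline{\phi_{|L^{\otimes k}|}(f(X))}=\mathrm{im}(\phi_{|L^{\otimes k}|})$ because $f$ is surjective. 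As the full system $|f^*L^{\otimes k}|$ refines this subsystem, one gets $\dim(\mathrm{im}(\phi_{|f^*L^{\otimes k}|}))\geq \dim(\mathrm{im}(\phi_{|L^{\otimes k}|}))$, and taking the supremum over $k$ yields the claim.

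For the reverse inequality $\kappa(X,f^*L)\leq \kappa(Y,L)$ I would use the Stein factorisation $f=h\circ g$, where $g\colon X\to Z$ is proper with $g_*\O_X=\O_Z$ (connected fibres) and $h\colon Z\to Y$ is finite; here $Z$ is again a normal projective variety, so we stay within the hypotheses. For the first factor the projection formula gives $g_*(g^*M)=M\otimes g_*\O_X=M$ for every line bundle $M$ on $Z$, hence $H^0(X,g^*M)=H^0(Z,M)$ identifies the two systems for all powers of $h^*L$. Consequently $\phi_{|f^*L^{\otimes k}|}=\phi_{|h^*L^{\otimes k}|}\circ g$, and since $g$ is surjective the images coincide; thus $\kappa(X,f^*L)=\kappa(Z,h^*L)$, and it remains to treat the finite morphism $h$.

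For $h$ finite the easy direction already gives $\kappa(Z,h^*L)\geq \kappa(Y,L)$, so the crux is the bound $\kappa(Z,h^*L)\leq \kappa(Y,L)$. Unlike the connected-fibre case, here the section spaces genuinely grow (for instance a degree-two cover of $\P^1$ multiplies $h^0$), so I cannot argue by equality of linear systems; this is the main obstacle. The strategy is to push forward: by the projection formula $h_*(h^*L^{\otimes k})=L^{\otimes k}\otimes \mathcal{F}$ with $\mathcal{F}:=h_*\O_Z$ a torsion-free coherent sheaf of rank $\deg(h)$ on $Y$, so that $h^0(Z,h^*L^{\otimes k})=h^0(Y,L^{\otimes k}\otimes \mathcal{F})$. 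Filtering $\mathcal{F}$ by subsheaves with torsion-free rank-one quotients $\mathcal{Q}_i$, each of which embeds into its reflexive hull $\mathcal{Q}_i^{\vee\vee}\cong\O_Y(D_i)$, and using subadditivity of $h^0$ along the filtration, reduces matters to the growth estimate $h^0(Y,\O_Y(D)\otimes L^{\otimes k})=O(k^{\kappa(Y,L)})$ for a fixed divisor $D$.

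Establishing this last estimate — that a fixed twist does not alter the polynomial growth rate governed by $\kappa(Y,L)$ — is the technical heart of the argument. Together with the standard equivalence between the definition of $\kappa$ through $\dim(\mathrm{im}(\phi_k))$ and its characterisation via the growth order of $h^0(\cdot,L^{\otimes k})$, it gives $\kappa(Z,h^*L)\leq \kappa(Y,L)$ and closes the chain of equalities and inequalities. The degenerate case $\kappa(Y,L)=-\infty$ should be checked separately, where the same pushforward computation shows that no power of $h^*L$ can be effective once no power of $L$ is, so that $\kappa(Z,h^*L)=-\infty$ as well.
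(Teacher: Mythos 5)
The paper does not actually prove this proposition: it is quoted with a citation (Proposition 1.5 of Mori's survey; the result is Iitaka's, see also Ueno), so your attempt has to be measured against the standard literature argument. Your easy inequality $\kappa(X,f^*L)\geq\kappa(Y,L)$ and the reduction via Stein factorisation --- including the projection-formula identification $H^0(X,g^*M)=H^0(Z,M)$ for the connected-fibre part --- are correct and are exactly how that argument begins. The gap is in the finite part.

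The estimate you call the technical heart, $h^0(Y,\O_Y(D)\otimes L^{\otimes k})=O\bigl(k^{\kappa(Y,L)}\bigr)$ for a fixed divisor $D$, is false. Take $E$ an elliptic curve, $\eta\in\Pic^0(E)$ non-torsion, $Y=\P_E(\O_E\oplus\eta)$ with projection $\pi\colon Y\to E$, $L=\O_Y(1)$ and $D=\pi^*p$ for a point $p\in E$. Since $\pi_*(L^{\otimes k})=\mathrm{Sym}^k(\O_E\oplus\eta)=\bigoplus_{i=0}^{k}\eta^{\otimes i}$ and $\eta$ is non-torsion, we get $h^0(Y,L^{\otimes k})=1$ for all $k\geq 0$, so $\kappa(Y,L)=0$; but $h^0(Y,L^{\otimes k}\otimes\O_Y(D))=\sum_{i=0}^{k}h^0(E,\eta^{\otimes i}\otimes\O_E(p))=k+1$, because each $\eta^{\otimes i}\otimes\O_E(p)$ has degree $1$. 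A fixed twist can therefore raise the order of growth, and your filtration step reduces the proposition to a false statement. The same phenomenon on $E$ itself ($h^0(\eta^{\otimes k}\otimes\O_E(p))=1$ for every $k$, although no power of $\eta$ is effective) shows that your treatment of the case $\kappa(Y,L)=-\infty$ is also unjustified: the pushforward computation plus filtration cannot rule out sections of $L^{\otimes k}\otimes h_*\O_Z$. What the filtration discards is precisely what makes the statement true: $h_*\O_Z$ is not just a torsion-free sheaf, it is a sheaf of $\O_Y$-algebras. The standard way to finish the finite case uses this structure: given $0\neq s\in H^0(Z,h^*L^{\otimes k})$, the coefficients of the characteristic polynomial of $s$ over $K(Y)$ (the elementary symmetric functions of its conjugates, e.g.\ the norm) are sections of $L^{\otimes jk}$, $1\leq j\leq \deg(h)$, over the open locus where $h$ is flat, whose complement has codimension at least $2$ since $Y$ is normal; by normality they extend to $Y$. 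Hence the section ring $\bigoplus_k H^0(Z,h^*L^{\otimes k})$ is integral over $\bigoplus_k H^0(Y,L^{\otimes k})$, so the two rings have the same transcendence degree over $\K$, which yields $\kappa(Z,h^*L)\leq\kappa(Y,L)$ and settles the $-\infty$ case at the same time (a nonzero section upstairs produces a nonzero norm downstairs). Alternatively, one can pass to a Galois closure and work with invariant sections.
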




Recall that a Noetherian scheme $X$ is Cohen-Macaulay if for any closed point $x$ the ring $\O_{X,x}$ is Cohen-Macaulay, i.e.\ $\mathrm{depth}(\mathscr{O}_{X,x})=\mathrm{dim}(\mathscr{O}_{X,x})$.
We say that an embedded projective variety $X\subset \P^n$ is arithmetically Cohen-Macaulay if its homogeneous coordinate ring is Cohen-Macaulay. Equivalently, the affine cone over $X$ in $\A^{n+1}$ is a Cohen-Macaulay variety. An arithmetically Cohen-Macaulay variety is Cohen-Macaulay, but the vice versa does not hold in general. All weighted projective spaces are normal, Cohen-Macaulay and $\Q$-factorial varieties. The following lemma will be useful for our purposes.

\begin{lem}\label{lem:CM}Let $f\colon X\to Y$ be a flat morphism of varieties, with $Y$ Cohen-Macaulay. Suppose that the scheme-theoretic fibres $X_y$, with $y=f(x)$ and $x$ a closed point are Cohen-Macaulay. Then the scheme $X$ is Cohen-Macaulay.
\end{lem}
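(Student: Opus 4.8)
The plan is to verify the Cohen-Macaulay condition at every closed point of $X$, as in the definition given above, by reducing to the standard commutative-algebra fact that depth and dimension add up across a flat local homomorphism. Throughout, the real content is importing these two additivity formulas; the rest is bookkeeping.

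First I would fix a closed point $x\in X$ and set $y:=f(x)$. Since $f$ is a morphism of varieties, in particular of finite type over $\K$, the residue field $\kappa(x)$ is finite over $\K$ by the Nullstellensatz; hence $y$ is also a closed point of $Y$, and therefore $\mathscr{O}_{Y,y}$ is Cohen-Macaulay by the hypothesis on $Y$. Flatness of $f$ ensures that the induced local homomorphism of Noetherian local rings $\mathscr{O}_{Y,y}\to\mathscr{O}_{X,x}$ is flat.

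Next I would identify the closed fibre of this homomorphism with the local ring of the scheme-theoretic fibre. Since $X_y=X\times_Y\operatorname{Spec}\kappa(y)$ and $x$ lies over $y$, there is a unique point of $X_y$ above $x$ (because $\kappa(x)\otimes_{\kappa(y)}\kappa(y)=\kappa(x)$), and localising at $x$ yields
\[
\mathscr{O}_{X_y,x}\;\cong\;\mathscr{O}_{X,x}\otimes_{\mathscr{O}_{Y,y}}\kappa(y)\;=\;\mathscr{O}_{X,x}/\mathfrak{m}_y\mathscr{O}_{X,x}.
\]
By hypothesis $X_y$ is Cohen-Macaulay, so this fibre ring $\mathscr{O}_{X,x}/\mathfrak{m}_y\mathscr{O}_{X,x}$ is Cohen-Macaulay.

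Finally I would invoke the additivity formulas for a flat local homomorphism $(A,\mathfrak{m})\to(B,\mathfrak{n})$ of Noetherian local rings, namely $\dim B=\dim A+\dim(B/\mathfrak{m}B)$ together with its depth companion $\operatorname{depth}B=\operatorname{depth}A+\operatorname{depth}(B/\mathfrak{m}B)$ (see for instance Matsumura, \emph{Commutative Ring Theory}, Theorem 23.3 and the surrounding discussion). Applying these with $A=\mathscr{O}_{Y,y}$, $B=\mathscr{O}_{X,x}$, and $B/\mathfrak{m}B=\mathscr{O}_{X_y,x}$, and using that $\mathscr{O}_{Y,y}$ and $\mathscr{O}_{X_y,x}$ are Cohen-Macaulay, I obtain $\operatorname{depth}\mathscr{O}_{X,x}=\operatorname{depth}\mathscr{O}_{Y,y}+\operatorname{depth}\mathscr{O}_{X_y,x}=\dim\mathscr{O}_{Y,y}+\dim\mathscr{O}_{X_y,x}=\dim\mathscr{O}_{X,x}$, so $\mathscr{O}_{X,x}$ is Cohen-Macaulay. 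As $x$ was an arbitrary closed point, $X$ is Cohen-Macaulay. The only genuine subtlety is the identification of the fibre ring $B/\mathfrak{m}B$ with $\mathscr{O}_{X_y,x}$ so that the hypothesis on the scheme-theoretic fibres can be fed into the depth formula; everything else is a direct application of the cited results.
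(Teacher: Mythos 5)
Your proof is correct and takes essentially the same route as the paper: reduce to closed points, note that $x$ is closed in the fibre $X_y$ and $y$ is closed in $Y$, and apply the Cohen--Macaulay criterion for a flat local homomorphism $\mathscr{O}_{Y,y}\to\mathscr{O}_{X,x}$. The only difference is presentational: the paper cites this criterion directly (Stacks Project, Lemma 10.163.3), whereas you unpack it into the depth and dimension additivity formulas from Matsumura.
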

\begin{proof}
It suffices to show the statement for the closed points of $X$. Let $x$ be a closed point of $X$. We want to show that $\O_{X,x}$ is Cohen-Macaulay. The point $x$ is closed in $X_y$, and the same is true for the point $y$ of $Y$. Since $Y$ and $X_y$ are Cohen-Macaulay, the statement follows by \cite[Lemma 10.163.3]{SP24}.
\end{proof}

If $M^{m\times n}(\K)\cong \A^{mn}$ is the affine space of matrices of size $m\times n$ over $\K$, we denote by $M_k$ the affine closed subscheme of $\A^{mn}$ parametrising the matrices $M$ of rank $\mathrm{rk}(M)\leq k$. The scheme $M_k$ is a Cohen-Macaulay affine variety (see for example \cite[ (3.1) Theorem]{ACGH13}).
\medskip

The lemma below can be found in the form of speculation for smooth varieties in \cite[p.\ 590]{Banica91}, without a proof. For the reader's convenience, we state it in the Cohen-Macaulay setting and provide proof.

\begin{lem}\label{lem:CMvar}
Let $X$ be a Cohen-Macaulay variety, $E$, $F$, two vector bundles, and suppose that the vector bundle $\Hom(E,F)$ is globally generated. Let $s_0\colon X\hookrightarrow V(\Hom(\wedge^kE,\wedge^kF))$ be the zero section of  the vector bundle $V(\Hom(\wedge^kE,\wedge^kF))\to X$, and $k$ any positive integer. Furthermore, let $\Sigma$ be the inverse image scheme of $s_0(X)$ via the natural morphism $V(\Hom(E,F))\to V(\Hom(\wedge^kE,\wedge^kF))$, and denote by $V$ the inverse image scheme of $\Sigma$ via the natural morphism of vector bundles $X\times H^0(E^{\vee}\otimes F) \to V(\Hom(E,F))$. Then $V$ and $\Sigma$ are Cohen-Macaulay varieties.
\end{lem}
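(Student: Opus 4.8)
The plan is to reduce both assertions to a local statement on $X$ and then propagate Cohen--Macaulayness through the two flat morphisms $\Sigma\to X$ and $V\to\Sigma$ by means of Lemma \ref{lem:CM}. I begin with $\Sigma$. Over an affine open $U\subseteq X$ on which both $E$ and $F$ trivialize, say with ranks $m=\mathrm{rk}(E)$ and $n=\mathrm{rk}(F)$, the total space $V(\Hom(E,F))|_U$ is identified with $U\times M^{m\times n}(\K)$, and under this identification the wedge morphism $\varphi\mapsto\wedge^k\varphi$ has components equal, up to sign, to the $k\times k$ minors of the universal matrix. Since $\Sigma$ is by definition the preimage of the zero section $s_0(X)$, it is cut out scheme-theoretically by the vanishing of these components, i.e. by all $k\times k$ minors; hence $\Sigma|_U\cong U\times M_{k-1}$, the determinantal scheme of matrices of rank $\le k-1$ with its natural minor-generated structure. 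The projection $\Sigma|_U\to U$ is then the base change to $U$ of the structure morphism $M_{k-1}\to\mathrm{Spec}(\K)$, so it is flat, and its scheme-theoretic fibres over closed points are copies of $M_{k-1}$, which is Cohen--Macaulay by the determinantal result recalled above. As $X$ is Cohen--Macaulay, Lemma \ref{lem:CM} applied to $\Sigma|_U\to U$ shows $\Sigma|_U$ is Cohen--Macaulay; being a local property on an open cover of $X$, this gives that $\Sigma$ is Cohen--Macaulay.

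For $V$, the global generation hypothesis on $\Hom(E,F)$ makes the evaluation $H^0(E^\vee\otimes F)\otimes\O_X\to E^\vee\otimes F$ surjective, so the induced morphism of total spaces $\mathrm{ev}\colon X\times H^0(E^\vee\otimes F)\to V(\Hom(E,F))$ is fibrewise a surjective linear map, hence a smooth (locally trivial affine-space-bundle) morphism, in particular flat with smooth fibres. Because $V=\mathrm{ev}^{-1}(\Sigma)$, the restriction $V\to\Sigma$ is the base change of $\mathrm{ev}$ along the closed immersion $\Sigma\hookrightarrow V(\Hom(E,F))$; smoothness being stable under base change, $V\to\Sigma$ is again flat with smooth, hence Cohen--Macaulay, fibres. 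Since $\Sigma$ is now known to be Cohen--Macaulay, a second application of Lemma \ref{lem:CM} yields that $V$ is Cohen--Macaulay.

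To justify calling $\Sigma$ and $V$ \emph{varieties}, I would verify integrality. Each chart $\Sigma|_U\cong U\times M_{k-1}$ is integral, being the product over the algebraically closed field $\K$ of the integral open $U$ with the integral determinantal variety $M_{k-1}$; and any two charts $\Sigma|_U,\Sigma|_{U'}$ meet along $\Sigma|_{U\cap U'}$, which is dense in both because it contains the zero section over the dense open $U\cap U'$. This forces $\Sigma$ to be irreducible, and it is reduced since reducedness is local. The smooth morphism $V\to\Sigma$ with irreducible affine-space fibres then makes $V$ irreducible and reduced as well, so both are varieties.

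The step I expect to demand the most care is the first: confirming that the scheme structure on $\Sigma$ induced by the \emph{non-linear}, degree-$k$ wedge morphism agrees exactly with the determinantal ideal of $k\times k$ minors defining $M_{k-1}$, so that the cited Cohen--Macaulayness of $M_{k-1}$ applies verbatim, and that the local description $\Sigma|_U\cong U\times M_{k-1}$ is an honest product, so that the projection to $U$ is genuinely flat. Once this identification is pinned down, the remainder is a formal double invocation of Lemma \ref{lem:CM}.
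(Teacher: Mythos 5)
Your proof is correct and follows essentially the same route as the paper's: a local trivialization of $\Sigma \to X$ as a product of an open subset of $X$ with the determinantal variety, two applications of Lemma \ref{lem:CM} via flat morphisms with Cohen--Macaulay fibres, and the surjectivity of the evaluation morphism (from global generation of $\Hom(E,F)$) to handle $V\to\Sigma$. Your treatment is in fact slightly more careful on the two points you flag — identifying the fibre as the determinantal scheme $M_{k-1}$ cut out by the $k\times k$ minors (the paper loosely writes $M_k$) and spelling out integrality — but these refinements do not change the argument.
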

\begin{proof} 
We start by proving that $\Sigma$ is a Cohen-Macaulay subvariety of $V(\Hom(E,F))$. Indeed, the morphism $\Sigma \to X$ is a locally trivial fibration, locally given by $U_i \times M_k \to U_i$, for some suitable open subsets $U_i$ covering $X$, and hence is flat. This also proves that $\Sigma$ is integral. The fibres of this fibration are all isomorphic to $M_k$, which we recall being a Cohen-Macaulay affine variety. As $X$ is Cohen-Macaulay, we can use Lemma \ref{lem:CM} and deduce that $\Sigma$ is Cohen-Macaulay. Now, the morphism $X\times H^0(E^{\vee}\otimes F) \to V(\Hom(E,F))$ of vector bundles is surjective, since $\Hom(E,F)$ is globally generated. Also, the morphism $V \to \Sigma$ is flat, and the fibres are isomorphic to an affine space of a certain fixed dimension. It follows that $V$ is integral, and also in this case we can use Lemma \ref{lem:CM} to conclude that $V$ is a Cohen-Macaulay variety.
\end{proof}

To conclude this section, we recall how to pull back Weil divisors under finite morphisms. Let $f\colon W \to Z$ be a finite surjective morphism of normal projective varieties. Let $D$ be a Weil divisor on $Y$. With some abuse of notation, we also denote by $f$ the restricted morphism $f_{|f^{-1}(Z_{\mathrm{reg}})}$, where $Z_{\mathrm{reg}}$ is the regular locus of $Z$. Then the pull-back $f^*(D)$ is defined as $\overline{f^*(D_{|Z_{\mathrm{reg}}})}$, where the closure denotes the unique Weil divisor on $W$ extending the Cartier divisor $f^*(D_{|Z_{\mathrm{reg}}})\subset f^{-1}(Z_{\mathrm{reg}})$. This operation respects the linear equivalence relation among Weil divisors, and hence, when $D$ is effective, it induces an injective morphism of spaces of global sections $H^0(Z,\O_Z(D))\to H^0(W,\O_W(f^*(D)))$, where  $\O_Z(D)$ and $\O_W(f^*(D))$ are the reflexive sheaves associated with the Weil divisors $D$ and $f^*(D)$ respectively.
\medskip

We refer the reader to \cite{Schwede} for an account on Weil divisors and reflexive sheaves.

\section{Hypersurfaces in products of two weighted projective spaces}\label{hypersurfaces}
Recall that since weighted projective spaces are connected and have vanishing irregularity, we have
\[
\Pic(\P^n(\v) \times \P^m(\w)) \cong \Pic(\P^n(\v)) \times \Pic(\P^n(\w)) \cong \Z \oplus \Z.
\] 
In particular, it is generated by $\{\O(a(\v),0),\O(0,a(\w))\}$. Instead, the class group of $\P(\v) \times \P(\w)$  is a finitely generated, free abelian group, with a basis given by the two reflexive sheaves $\O(1,0),\O(0,1)$. If $x_0,\dots,x_n$ and $y_0,\dots,y_m$ are the coordinates of $\P(\v)$ and $\P(\w)$ respectively, then $$\mathrm{Cox}(\P(\v) \times \P(\w))=\K[x_0,\dots,x_n,y_0,\dots,y_m].$$

 We start by studying the situation where the factors appearing in $\P(\v) \times \P(\w)$ have dimensions at least 2.

\begin{prop}\label{prop1}
Consider a product $X=\P^n(\v) \times \P^m(\w)$, with $\dim(X)\geq 4$. Consider a line bundle $\O(d,e)$, with $d$ and $e$ positive numbers. Then the general member $Y$ of $|\O(d,e)|$ has
\[
\Cox(Y)\cong \K [x_0,\dots,x_n,y_0,\dots,y_m]/(f),
\]
where $f$ is an equation defining $Y$. In particular, $Y$ is a Mori dream space.
\end{prop}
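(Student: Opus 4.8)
The plan is to transfer the Cox-ring structure from the ambient space to the hypersurface. The variety $X=\P^n(\v)\times\P^m(\w)$ is a projective toric variety, hence a Mori dream space, and as recorded above $\Cox(X)=\K[x_0,\dots,x_n,y_0,\dots,y_m]$, bigraded by $\Cl(X)\cong\Z^2$ with generators the classes of $\O(1,0)$ and $\O(0,1)$. Since $d,e\geq 1$, the line bundle $\O_X(d,e)$ is ample, so $Y$ is an ample Cartier hypersurface of dimension $\dim(X)-1\geq 3$; by generality $Y$ is normal and its singularities lie in $Y\cap X_{\mathrm{sing}}$, a locus of codimension $\geq 2$ in $Y$. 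Each $H_i$ is the restriction of a $\Q$-Cartier divisor on $X$, and $H_1,H_2$ will generate $\Cl(Y)\otimes\Q$, so $Y$ is also $\Q$-factorial.

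The first main step, and the one I expect to be the main obstacle, is to prove that restriction induces an isomorphism $\Cl(X)\xrightarrow{\sim}\Cl(Y)$, so that $\Cl(Y)\cong\Z^2$ is generated by $H_1$ and $H_2$. This is a Grothendieck--Lefschetz statement for the \emph{class groups} (not merely the Picard groups) of normal projective varieties of dimension $\geq 4$. The subtlety compared with the smooth case is genuine: the generators $\O(1,0),\O(0,1)$ of $\Cl(X)$ are not Cartier, so one needs the singular form of the theorem and must rule out any extra divisor classes on $Y$ arising from the singularities. The saving feature is that $X_{\mathrm{sing}}$ is a union of low-dimensional coordinate strata, so a general $Y$ meets it in codimension $\geq 2$ within $Y$ and the class-group comparison can be carried out on the smooth loci; here the results of Okawa and Ottem cited in the introduction streamline the argument.

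The second step is cohomological, and is where the hypothesis $n,m\geq 2$ enters. Using the Künneth formula together with the vanishing $H^i(\P^n(\v),\O(a))=0$ for $0<i<n$ (and the analogue on $\P^m(\w)$), one obtains $H^1(X,\O_X(a,b))=0$ for all $a,b$ and $H^2(X,\O_X(-d,-e))=0$. Feeding $\O_X(a,b)$ into the structure sequence
\[
0\to\O_X(a-d,b-e)\to\O_X(a,b)\to\O_X(a,b)_{|Y}\to 0,
\]
the vanishing of $H^1$ shows that the restriction $H^0(X,\O_X(a,b))\to H^0(Y,\O_Y(aH_1+bH_2))$ is surjective for every $(a,b)$, while $H^2(X,\O_X(-d,-e))=0$ together with $h^1(X,\O_X)=0$ gives $h^1(Y,\O_Y)=0$. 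One checks along the way that $\O_X(a,b)_{|Y}$ agrees with the reflexive sheaf $\O_Y(aH_1+bH_2)$ outside a codimension $\geq 2$ locus, so their global sections coincide and the gradings of $\Cox(X)$ and $\Cox(Y)$ match.

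Finally I would assemble these inputs. By the first step the bigraded homomorphism $\Cox(X)=\K[x_0,\dots,x_n,y_0,\dots,y_m]\to\Cox(Y)$ is well defined; by the second step it is surjective in every degree, and the same exact sequences identify its kernel in degree $(a,b)$ with $f\cdot H^0(X,\O_X(a-d,b-e))$. Hence $\Cox(Y)\cong\K[x_0,\dots,x_n,y_0,\dots,y_m]/(f)$. This $\K$-algebra is finitely generated, so by \cite[Proposition 2.9]{HK00} the normal, $\Q$-factorial projective variety $Y$, which satisfies $h^1(Y,\O_Y)=0$, is a Mori dream space, as claimed.
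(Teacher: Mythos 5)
Your route is genuinely different from the paper's, and it stumbles at exactly the point where the paper chose to detour. After the first step (which is common to both arguments), the paper does \emph{not} try to prove surjectivity of the restriction map $\Cox(X)\to\Cox(Y)$ directly: it passes to the finite quotient $\pi\colon\P^n\times\P^m\to X$, pulls $Y$ back to a smooth hypersurface $Y'=\pi^*(Y)$, computes $\Cox(Y')$ by Ottem's smooth-case argument, and then identifies $\Cox(Y)$ inside $\Cox(Y')$ via pull-back of Weil divisors together with a Krull-dimension count. Two remarks on your first step: the statement you need there is precisely the Grothendieck--Lefschetz theorem for class groups of normal varieties, i.e.\ \cite[Theorem 1]{RS06}, which the paper invokes; the results of Okawa and Ottem do not give class-group Lefschetz statements, so your proposed justification \q{on the smooth loci, streamlined by Okawa and Ottem} points at the wrong tools, though the needed theorem does exist.

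The genuine gap is in your third step, the surjectivity of $H^0(X,\O_X(a,b))\to H^0(Y,\O_Y(aH_1+bH_2))$, where the target is by definition the group of sections of the \emph{reflexive} sheaf attached to the Weil class $aH_1+bH_2$. Your restriction sequence, granting the $H^1$-vanishing, only gives surjectivity onto $H^0(Y,\O_X(a,b)\otimes\O_Y)$, and your bridge --- \q{the two sheaves agree outside a codimension $\geq 2$ locus, so their global sections coincide} --- is not a valid inference: global sections of a coherent sheaf are not computed on a big open subset unless the sheaf satisfies Serre's condition $S_2$ along the missing locus. Concretely, sections of the reflexive hull $(\O_X(a,b)\otimes\O_Y)^{\vee\vee}=\O_Y(aH_1+bH_2)$ need not lift to sections of $\O_X(a,b)\otimes\O_Y$, and torsion in $\O_X(a,b)\otimes\O_Y$ could even destroy injectivity; since $\O_X(a,b)$ is merely reflexive, not locally free, its restriction to the Cartier divisor $Y$ can a priori fail $S_2$ along $Y\cap X_{\mathrm{sing}}$. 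This is exactly the phenomenon that distinguishes the weighted setting from Ottem's smooth one, and it infects the heart of the proposition, namely the finite generation of $\Cox(Y)$. A related silent assumption occurs in your second step: the K\"unneth formula applies to $p_1^*\O(a)\otimes p_2^*\O(b)$, not automatically to its double dual $\O_X(a,b)$, so you must know these agree. Both points can be repaired by one additional input: the sheaves $\O_{\P(\v)}(a)$ are maximal Cohen--Macaulay, hence $\O(a)\boxtimes\O(b)$ is maximal Cohen--Macaulay (so reflexive, equal to $\O_X(a,b)$, and K\"unneth applies), and the restriction of a maximal Cohen--Macaulay sheaf to a Cartier divisor is again maximal Cohen--Macaulay, hence torsion-free and $S_2$, hence reflexive on the normal variety $Y$, which yields the identification of sections you want. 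Without some such argument, your proof is incomplete precisely where it departs from the smooth case.
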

\begin{proof}
Since $d$ and $e$ are positive numbers, the line bundle $\mathscr{O}(Y)\cong \mathscr{O}(d,e)$ is base point-free and ample, and hence, since $Y$ is general, we can apply the Grothendieck-Lefschetz Theorem (see\ \cite[Theorem 1]{RS06}) and conclude that $\Cl(Y) \cong \Cl(\P(\v)) \times \Cl(\P(\w))$. Then a basis for $\Cl(Y)$ (resp.\ $\mathrm{Pic}(Y)$) is given by the restriction of $\O(1,0)$ and $\O(0,1)$ (resp.\ $\O(a(\v),0)$ and $\O(0,a(\w))$) to $Y$. 



We now compute the Cox ring of $Y$. Consider the quotient morphism $$\pi\colon X'=\P^n\times \P^m\to \P(\v)\times \P(\w)=X.$$ The general member $Y$ of $|\O_X(d,e)|$ is pulled back to a smooth member $Y'$ of $|\O_{X'}(d,e)|$, of Picard number 2. Note that $Y'=\pi^*(Y)$ coincides with the inverse image scheme $X'\times_X Y$ via $\pi$ (see\ \cite[Proposition 11.48, Corollary 11.49]{GW10}). Let $x_0',\dots,x_n'$ (resp.\ $y_0',\dots,y_m'$) be the coordinates of $\P^n$ (resp.\ $\P^m$). Using the short exact sequence associated with $Y'$, and the vanishing of cohomology in degree $1$, we see that the Cox ring of $Y'$ is $\mathrm{Cox}(Y')=\K[x_0',\dots,x_n',y_0',\dots,y_m']/(f')$, where $f'=\pi^*(f)$ is an equation for $Y'$, and $f$ is an equation for $Y$. Consider the ring $R=\K[x_0,\dots,x_n,y_0,\dots,y_m]/(f)$. We have a natural ring homomorphism $R\to \mathrm{Cox}(Y')$, defined by $x_i\mapsto (x_i')^{v_i}$, $y_j\mapsto (y_j')^{{w_j}}$, and which factors through $\mathrm{Cox}(Y)$. The image of the injective ring homomorphism $\mathrm{Cox}(Y)\hookrightarrow \mathrm{Cox}(Y')$, obtained via pull back of Weil divisors, is the subring $$\K\left[(x_0')^{v_0},\dots,(x_n')^{v_n},(y_0')^{w_0},\dots,(y_m')^{w_m}\right]/(f').$$ Recall that the Cox ring of $Y$ has Krull dimension equal to $\mathrm{dim}(Y)+\rho(X)$, by \cite[Proposition 2.9]{HK00}, where $\rho(X)$ is the Picard number of $X$. But then $R\to \mathrm{Cox}(Y)$ is onto, and the rings $R,\mathrm{Cox}(Y)$ are integral domains with the same Krull dimension. This implies that $R\cong \mathrm{Cox}(Y)$.
\end{proof}

\begin{rmk}
We observe that if $X=\P^n(\v)\times \P^m(\w)$, with $n,m\geq 4$, the same argument in the proposition above applies to general Cartier hypersurfaces in the linear series $|\O(d,0)|$, $|\O(0,e)|$. Indeed, if $V$ is any general Cartier hypersurface in $|\O_{\P^n(\v)}(d)|$, it is sufficient to observe that $$\mathrm{Cl}(V\times \P(\w))\cong \mathrm{Pic}(V_{\mathrm{reg}}\times \P(\w)_{\mathrm{reg}})\cong  \mathrm{Pic}(V_{\mathrm{reg}})\times \mathrm{Pic}(\P(\w)_{\mathrm{reg}})\cong \mathrm{Cl}(V)\times \mathrm{Cl}(\P(\w)),$$ where the second isomorphism is obtained using \cite[Satz 1.7]{Friedrich74}. Then, under our assumptions, the Class group of $V\times \P(\w)$ has rank 2, and so we can argue as in the proposition above to show that $V\times \P(\w)$ is a Mori dream space. The same reasoning applies for hypersurfaces of the type $\P(\v)\times W$, where $W$ is a general Cartier hypersurface in $|\O_{\P(\w)}(e)|$.
\end{rmk}

The proposition above implies that, as in the smooth case, the only products (with two factors) where we can find non-Mori dream Cartier ample hypersurfaces are those of the type $\P^1 \times \P^m(\w)$, with $m\geq 2$. 

\begin{lem}\label{lem:reduced}
Consider a product $X=\P^{d-1} \times \P^m(\w)$, with $d\geq 2$. Let $z_1,\dots,z_{d}$ be the coordinates of $\P^{d-1}$, and $y_0,\dots, y_m$ those of $\P^m(\w)$. Choose a natural number $e$ which is a multiple of $a(\w)$. Then the closed subscheme $Z$ of $X$, defined by the maximal minors of the matrix
\[
N=\begin{pmatrixT}
    0     &    z_1     &  y_0^{e/w_0}    \\
   -z_1   &    z_2     &  y_1^{e/w_1}    \\
  \cdots  &   \cdots   & \cdots  \\
-z_{d-1}  &    z_d     & y_{d-1}^{e/{w_{d-1}}} \\
     -z_d &     0      & y_{d}^{e/w_d} 
\end{pmatrixT}
\]
is reduced. 
\end{lem}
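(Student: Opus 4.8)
The plan is to verify Serre's criterion for reducedness, that is, conditions $R_0$ (generic reducedness) and $S_1$ (no embedded points), for the determinantal scheme $Z=D_2(\varphi)$, where $\varphi$ is the morphism of vector bundles of ranks $d+1$ and $3$ on $X$ represented by the matrix $N$ (note $m\ge d$, so that $y_d$ is indeed a coordinate). The structural fact I would record first, which drives everything, is that the top $2\times(d+1)$ block of $N$, whose columns are $(0,z_1),(-z_1,z_2),\dots,(-z_{d-1},z_d),(-z_d,0)$, has rank exactly $2$ at every point of $\P^{d-1}$: its $2\times2$ minors are, up to sign, the monomials $z_iz_j$, which have no common zero on $\P^{d-1}$ (here $d\ge2$). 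Hence, writing $a(z),b(z)$ for the first two rows and $\mathbf r(y):=(y_0^{e/w_0},\dots,y_d^{e/w_d})$ for the third, the scheme $Z$ is precisely the locus where $\mathbf r(y)$ lies in the $2$-plane $W(z)=\langle a(z),b(z)\rangle\subset\K^{d+1}$, and $D_2(\varphi)$ has expected codimension $(d+1-2)(3-2)=d-1$.

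Next I would compute $\dim Z$, in order to show the expected codimension is attained and thereby secure the $S_1$ input. I would analyse the projection $p\colon Z\to\P^{d-1}$ together with the morphism $\nu\colon\P^m(\w)\dashrightarrow\P^d$, $[y]\mapsto[y_0^{e/w_0}:\dots:y_d^{e/w_d}]$, defined off the weighted linear space $L=\{y_0=\dots=y_d=0\}$; since $\nu$ factors as the projection away from $L$ followed by a finite map, all its fibres have pure dimension $m-d$. The constant rank of the top block means the fibre $p^{-1}([z])$ equals $\nu^{-1}(\ell_z)\cup L$, where $\ell_z=\P(W(z))\subset\P^d$ is a line; as $\dim L=m-d-1<m-d+1=\dim\nu^{-1}(\ell_z)$, every fibre of $p$ has dimension exactly $m-d+1$. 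Therefore $\dim Z=(d-1)+(m-d+1)=m$, so $Z$ attains its expected codimension $d-1$, and by the theory of determinantal schemes of maximal codimension (the Eagon–Northcott complex resolves $\O_Z$; this is also the content of Lemma \ref{lem:CMvar} combined with Lemma \ref{lem:CM}) the scheme $Z$ is Cohen–Macaulay, hence equidimensional of dimension $m$ and $S_1$. From this I would extract two consequences: every irreducible component of $Z$ dominates $\P^{d-1}$ (a component lying over a proper closed $B_0\subsetneq\P^{d-1}$ would have a general fibre of dimension $\ge m-\dim B_0>m-d+1$, impossible), and, by miracle flatness, $p$ is flat.

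For $R_0$ I would then invoke generic smoothness. Since every component dominates $\P^{d-1}$, it suffices to show $p$ is smooth at the generic point of each component, equivalently that the generic fibre $Z_\eta=\nu^{-1}(\ell_\eta)$ is generically smooth. The lines $\ell_z$ sweep out all of $\P^d$: the rational map $\P^{d-1}\times\P^1\dashrightarrow\P^d$, $([z],[\lambda:\mu])\mapsto[\lambda a(z)+\mu b(z)]$, is dominant, since the associated triangular system in $z$ is solvable for a general target and reduces to a degree-$d$ equation in $\lambda$. Consequently a general $\ell_z$ meets the smooth locus $U\subseteq\P^d$ of $\nu$ in a dense open subset; as we are in characteristic $0$, generic smoothness makes $\nu$ smooth over $U$, so $\nu^{-1}(\ell_\eta\cap U)$ is smooth and dense in $Z_\eta$. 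Thus $p$ is smooth there (it is flat with smooth fibre), so $Z$ is regular at the corresponding generic points, which gives $R_0$. Serre's criterion then yields that $Z$ is reduced.

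The hard part will be the entries $y_\ell^{e/w_\ell}$, which obstruct both naive routes. On one hand they could create jumps in the fibre dimension of $p$, destroying the expected-codimension and hence the Cohen–Macaulay input; this is exactly what the constant-rank-$2$ observation, together with the equidimensionality of $\nu$, prevents. On the other hand their high exponents could a priori make the determinantal scheme non-reduced, so a direct Bertini argument on the very non-general sections $f_k\in H^0(\O(0,e))$ cutting out the fibres is unavailable; it is precisely here that characteristic $0$ and the generic smoothness of $\nu$ become essential.
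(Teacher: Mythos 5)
Your reduction to Serre's criterion, the rank-two observation on the $z$-block, the fibre-dimension count giving $\dim Z=m$, Cohen--Macaulayness from the attained expected codimension, and flatness of $p\colon Z\to\P^{d-1}$ by miracle flatness are all sound, and up to that point your argument runs parallel to the paper's (which likewise gets the $S_1$/CM input from the determinantal structure, via Lemma \ref{lem:CMvar} and the codimension bound). The genuine gap is in your $R_0$ step: the sentence \emph{``generic smoothness makes $\nu$ smooth over $U$''} is false in general, because generic smoothness in characteristic $0$ requires a smooth source, and $\P^m(\w)\setminus L$ is singular. Worse, there need not exist \emph{any} dense open $U\subseteq\P^d$ over which $\nu$ is smooth. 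Take $\w=(2,2,2,1,1)$ (well-formed), $d=2$, $e=a(\w)=2$, so that $\nu([y_0:\dots:y_4])=[y_0:y_1:y_2]$: every fibre of $\nu$ is a copy of $\A^2/\pm 1$ whose cone point $[u_0:u_1:u_2:0:0]$ is a singular point of $\P(\w)$, so $\nu$ is smooth over no dense open subset of $\P^2$ (a scheme smooth over a smooth base is smooth). Hence ``$\nu^{-1}(\ell_\eta\cap U)$ is smooth'' does not follow, and your proof of generic reducedness collapses exactly at the step you yourself identify as the crux.

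The argument is repairable, but at a real cost you do not pay: apply generic smoothness to $\nu$ restricted to the smooth locus $V$ of $\P(\w)\setminus L$, and then show that $(\nu|_V)^{-1}(\ell_\eta\cap U)$ is still \emph{dense} in $Z_\eta$, i.e.\ that no component of $Z_\eta$ (pure of dimension $m-d+1$, by flatness and CM-ness) lies in $L$, in the singular locus of $\P(\w)$, or over the finitely many points of $\ell_\eta\setminus U$. This needs a stratum-by-stratum estimate on the singular strata $\{y_j=0,\ j\notin S\}$, $\gcd(w_i\colon i\in S)>1$, showing they meet $Z_\eta$ in dimension at most $m-d$; the inequality is true but is an additional argument, not a formality. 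The paper sidesteps this entirely: after proving $Z$ irreducible and CM, it produces a \emph{single explicit reduced point} by showing that the other projection $Z\to\P(\w)$, restricted over the regular locus of $\P(\w)$ away from the coordinate locus, is finite and flat, and that its scheme-theoretic fibre over $(1,1,\dots,1)$ is $\mathrm{Spec}\bigl(\K[T]/p(T)\bigr)$ with $p(T)=\bigl((T-1)^{d+1}-1\bigr)/(T-2)$ separable; étale-ness at one point then gives reducedness without ever invoking generic smoothness on a singular source. Two smaller inaccuracies in your write-up: the $2\times 2$ minors of the $z$-block are not all monomials $z_iz_j$ (some are binomials $z_jz_{i+1}-z_iz_{j+1}$), although the no-common-zero conclusion survives using the minors against the first and last columns; and ``CM hence equidimensional'' needs either connectedness of $Z$ or, more directly, the lower bound $\dim\geq m$ on every component coming from the determinantal codimension bound.
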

\begin{proof}
The scheme $Z$ is the first degeneracy locus of the morphism of vector bundles
\[
\begin{tikzcd}
\O^{d+1} \rar[" N \boldsymbol{\cdot} "] & \O(1,0)^2\oplus\O(0,e).
\end{tikzcd}
\]
We observe that either $Z$ is irreducible, or has two irreducible components, one of which has necessarily dimension $m$ (and so codimension $d-1$). In the second case, the other irreducible component would have dimension $m-2$ (and so codimension $d+1$). But the codimension of $Z$ in $X$ is smaller than or equal to $d-1$. Indeed, with the notation of Lemma \ref{lem:CMvar}, the scheme $Z$ is set-theoretically the intersection of $X$ and $\Sigma$ in $V(\O^{d+1}\otimes (\O(1,0)^2\oplus\O(0,e)))$, and $X$ is embedded in $V(\O^{d+1}\otimes (\O(1,0)^2\oplus\O(0,e)))$ via the section $N$. This embedding is a regular immersion since it is a section of a smooth morphism (\cite[Lemma 31.22.8]{SP24}). Then any irreducible component of $\Sigma \cap X$ has codimension at most $d-1$ in $X$, by \cite[Lemma 43.13.12]{SP24} (see also \cite[Lemma 2.7]{Ott95}).
 This implies that $Z$ is irreducible, and hence Cohen-Macaulay, as it has the expected codimension. In particular, to show that $Z$ is reduced, we only need to find a closed point $z$ at which $Z$ is reduced. Let ${p_2}_{|Z}\colon Z \to \P(\w)$ be the restriction of the projection $p_2\colon \P^1\times \P(\w)\to \P(\w)$. Consider the closed subset $C=\{y_0=y_1=\cdots=y_d=0\}\subset \P(\w)$ and the regular locus $\P(\w)_{\mathrm{reg}}$ of $\P(\w)$. We set $U=\P(\w)\setminus \{C \cup \P(\w)_{\text{reg}} \}$ and denote by $\iota$ the open immersion $U \hookrightarrow \P(\w)$. The morphism
\begin{equation}\label{basechange}
h={p_2}_{|Z} \times \iota \colon X \times_{\P(\w)} U \to U
\end{equation}
obtained from ${p_2}_{|Z}$ via base change is proper and quasi-finite, hence finite (see\ \cite[Lemma 30.21.1]{SP24}). Since $Z \times_{\P(\w)} U \cong {{p_2}_{|Z}}^{-1}(U)$ is Cohen-Macaulay and $U$ is regular, by the miracle flatness Theorem (see\ \cite[Lemma 10.128.1]{SP24}), the morphism (\ref{basechange}) is flat. Suppose that $z \in {{p_2}_{|Z}}^{-1}(U)$ is a point at which $h$ is unramified, then $h$ is étale at $z$, and, by \cite[Corollary 3.24]{Liu06}, $Z$ is regular at $z$. In particular, $z$ is a reduced point of $Z$. Hence, for our purposes, we need to find a closed point $z$ at which $h$ is étale. It suffices to find a scheme-theoretic fibre $U_x$ via $h$ containing a reduced point. Consider the point $\overline{x}=(1,1,\dots,1) \in U$. We claim that the fibre $U_{\overline{x}}$ has a reduced point. We restrict ourselves to the chart $\{z_1\neq 0\}$. We set \[p(T)=\frac{(T-1)^{d+1}-1}{T-2}.\] In this chart, a few calculations show that the scheme-theoretic fibre of $h$ over  $\overline{x}$ is isomorphic to 
\[
\mathrm{Spec}(\K[T]/p(T))\cong \mathrm{Spec}(\K\times \cdots \times \K),
\]
which is a reduced scheme. This concludes the proof.
\end{proof}
\begin{prop}\label{prop:genmembermds}
Consider a product $X=\P^1 \times \P^m(\w)$, with $m\geq 3$. If $\mathscr{O}_X(d,e)$ is an effective line bundle with $2 \leq d \leq n$ and $e\geq a(\underline{w})$, the general member $Y$ of $|\mathscr{O}_X(d,e)|$ admits a small $\Q$-factorial modification $Y\DashedArrow Y'$.
\end{prop}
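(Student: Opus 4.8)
The plan is to realise the small $\Q$-factorial modification $Y'$ explicitly as a degeneracy locus, in the spirit of the strategy announced in the introduction, and then to write down the birational map $Y\DashedArrow Y'$ by hand and check directly that it is an isomorphism in codimension $1$.

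First I would put the equation of $Y$ in ladder form. Denoting by $x_0,x_1$ the coordinates of $\P^1$, a general member $Y\in|\O_X(d,e)|$ is defined by $f=\sum_{i=0}^{d}x_0^{d-i}x_1^{i}f_i$ with $f_i\in H^0(\P^m(\w),\O(e))$ general; since $e\ge a(\w)$ there are enough such sections for the base locus $B:=\{f_0=\dots=f_d=0\}\subset\P^m(\w)$ to have the expected codimension $d+1$. Writing $z_1,\dots,z_d$ for the coordinates of $\P^{d-1}$, let $N$ be the $(d+1)\times 3$ matrix of Lemma~\ref{lem:reduced} with the pure powers in its third column replaced by the $f_i$, and set $Y':=D_2(N)\subset\P^{d-1}\times\P^m(\w)$, the locus where $\mathrm{rk}(N)\le 2$. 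As in the proof of Lemma~\ref{lem:reduced}, $Y'$ has the expected codimension $d-1$, hence dimension $m=\dim Y$; moreover both $Y$ and $Y'$ project to $\P^m(\w)$ with generic fibre of length $d$, which is what makes them birational over $\P^m(\w)$.

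The core of the argument is to check that $Y'$ is a normal, $\Q$-factorial projective variety, so that it is an admissible target for a small modification. Cohen--Macaulayness of $Y'$ is exactly the content of Lemma~\ref{lem:CMvar}; irreducibility follows from the codimension estimate as in Lemma~\ref{lem:reduced}, and reducedness follows from Lemma~\ref{lem:reduced} transported to the general member through the openness of the reduced locus in the flat family of Lemma~\ref{lem:CMvar}. Normality then follows from Serre's criterion, $S_2$ being automatic from Cohen--Macaulayness; for $R_1$ I would bound the singular locus of $Y'$, showing that it consists only of the singularities inherited from $\mathrm{Sing}(\P^m(\w))$ (which lie in codimension $\ge 2$ because $\P^m(\w)$ is well-formed) together with the deeper strata of the fibre $\P^{d-1}\times B$, all of codimension $\ge 2$ in $Y'$. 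The genuinely delicate step, and the one I expect to be the main obstacle, is $\Q$-factoriality: since it is not preserved under isomorphism in codimension $1$, it has to be verified intrinsically for the determinantal variety $Y'$, by a local analysis along $\P^{d-1}\times B$ showing that the singularities occurring there are (quotient, hence) $\Q$-factorial singularities --- for $d=2$ this is the familiar statement that the two sides of an Atiyah-type flop are $\Q$-factorial.

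Finally I would exhibit the birational map. Over $\{y\notin B\}$ the linear system $N([z],y)\cdot(x_1,x_0,1)^{\top}=0$ determines $[z]=[z(x,y)]$ uniquely from $([x],y)\in Y$ and yields a morphism $Y\setminus(\P^1\times B)\to Y'$; conversely, since the first two columns of $N$ are independent wherever $z\neq 0$, one has $D_1(N)=\varnothing$, so $N$ has rank exactly $2$ at every point of $Y'$ and its one-dimensional kernel recovers $[x]$, giving the inverse morphism on $Y'\setminus(\P^{d-1}\times B)$. Thus $Y\DashedArrow Y'$ restricts to an isomorphism $Y\setminus(\P^1\times B)\xrightarrow{\ \sim\ }Y'\setminus(\P^{d-1}\times B)$. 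It then remains to count codimensions: $\P^1\times B$ has codimension $d\ge 2$ in $Y$, while $\P^{d-1}\times B$ has codimension $2$ in $Y'$, so the map is an isomorphism in codimension $1$. Together with the normality and $\Q$-factoriality of $Y$ (a general member, carrying at worst the quotient singularities inherited from $X$) and of $Y'$, this exhibits $Y\DashedArrow Y'$ as the desired small $\Q$-factorial modification.
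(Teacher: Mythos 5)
Your construction coincides with the paper's: the same matrix $N$, the same determinantal model $Y'=D_2(N)\subset\P^{d-1}\times\P^m(\w)$, reducedness of the general $Y'$ obtained by combining Lemma \ref{lem:reduced} with the flat family built from the incidence variety of Lemma \ref{lem:CMvar} and openness of the reduced-fibre locus, and the same explicit pair of mutually inverse morphisms away from $\P^1\times B$ and $\P^{d-1}\times B$ (where $B=\{f_0=\cdots=f_d=0\}$), with the same codimension counts. Three presentational remarks: Cohen--Macaulayness of $Y'$ is not literally ``the content of Lemma \ref{lem:CMvar}'' (that lemma concerns the universal incidence variety); it follows, as in the proof of Lemma \ref{lem:reduced}, from irreducibility together with the expected codimension of a degeneracy locus. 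The paper runs your family argument only for $1<d<m$ and handles $d=m$ by a separate, direct flatness argument for $Y'\to\P^{d-1}$. Finally, your plan to prove $R_1$ by bounding $\mathrm{Sing}(Y')$ is an unnecessary detour: once $Y'$ is reduced, the codimension-one isomorphism with the normal variety $Y$ that you establish in your last paragraph already yields $R_1$, which together with Cohen--Macaulayness is exactly how the paper concludes normality.

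The genuine gap is the step you yourself flag as the main obstacle: $\Q$-factoriality of $Y'$. What you propose --- a local analysis along $\P^{d-1}\times B$ showing that the singularities there are quotient singularities --- is not carried out, and as a plan it is missing its key idea. A priori the singularities of the determinantal variety $Y'$ are only local complete intersection singularities (the matrix $N$ has rank exactly $2$ along all of $Y'$), and nothing in your write-up explains why they should be quotient; the appeal to ``Atiyah-type flops'' for $d=2$ assumes precisely what must be proved, since $\Q$-factoriality is \emph{not} preserved by isomorphisms in codimension one, nor can it in general be read off from local analytic structure (a projective threefold with a single node may or may not be $\Q$-factorial). The efficient repair is the global argument that the paper makes explicit in the proof of Proposition \ref{flip} and uses implicitly here: since $Y\DashedArrow Y'$ is an isomorphism in codimension one, $\Cl(Y')\cong\Cl(Y)\cong\Z^2$ (the latter by the Grothendieck--Lefschetz theorem, which is part of the generality of $Y$), while $\Pic(Y')\otimes\Q$ already contains the two classes pulled back from $\P^{d-1}$ and from $\P^m(\w)$, which are independent because their images in $\Cl(Y)\otimes\Q$ are $eH_2-H_1$ and a multiple of $H_2$; hence $\Pic(Y')\otimes\Q=\Cl(Y')\otimes\Q$ and every Weil divisor on $Y'$ is $\Q$-Cartier. (Your quotient-singularity claim can in fact be substantiated, but only by a route you do not indicate: pull everything back along the quotient map $\P^{d-1}\times\P^m\to\P^{d-1}\times\P^m(\w)$ and check, using smoothness of the preimage of $B$ and a transversality computation for the local equations of the degeneracy locus, that the pulled-back $Y'$ is smooth, so that $Y'$ is a finite quotient of a smooth variety.)
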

\begin{proof}
First of all, in the statement, by general we mean:
\vspace{0.1cm}

$\bullet$ $Y$ is a normal projective variety of Picard number $2$ (hence $\Q$-factorial).
\vspace{0.1cm}

$\bullet$ If $Y=\{x_0^df_0+x_0^{d-1}x_1f_1+\cdots+x_0x_1^{d-1}f_{d-1}+x_1^df_d=0\}$, the closed subscheme $\{f_i=0\}_i\subset \P^m(\underline{w})$ has codimension $d+1$ in $\P^m(\underline{w})$ and is reduced, when non-empty.
\vspace{0.1cm}

Indeed, the subset of $|\mathscr{O}_X(d,e)|$ made of the members satisfying the conditions above contains an open dense subset. To make this clear, consider the globally generated vector bundle $\mathscr{E}=\oplus_{i=0}^{d} \O_{\P(\w)}(e)$. Then, by the Bertini-type Theorem for globally generated vector bundles (see for example \cite{Ott95}), the zero locus of a general section of $H^0(\P(\w),\mathscr{E})$ has codimension $d+1$. Moreover, by the Grothendieck-Lefschetz Theorem (see\ \cite[Theorem 1]{RS06}) and the classical Bertini Theorem, the general member of $|\mathscr{O}_X(d,e)|$ has Picard number 2 and is a normal projective variety. This tells us that the general member of $|\mathscr{O}_X(d,e)|$ satisfies the two conditions above. 
\vspace{0.1cm}

We want to show that $Y$ admits a small $\Q$-factorial modification $Y\DashedArrow Y'$ (up to shrinking the open subset of $|\mathscr{O}_X(d,e)|$ describing the generality of $Y$). Let $x_0, x_1$ be the coordinates of $\P^1$, which generate $\mathscr{O}_X(1,0)$ as sections. Then we can write
\[
Y=\{x_0^df_0+x_0^{d-1}x_1f_1+\cdots+x_0x_1^{d-1}f_{d-1}+x_1^df_d=0\},
\]
where $\{f_i\}_{i=0}^d$ is a set of (weighted) homogeneous polynomials of degree $e$ in the variables $y_0,\dots,y_m$. We observe that $Y$ is the degeneracy locus of a morphism of vector bundles on $\P^1 \times \P^m(\w)$. In particular, $Y$ is the first degeneracy locus of the morphism 
\[
\O^{d+1}\to \O(1,0)^d\oplus\O(0,e),
\] 
defined by the matrix
\[
M=\begin{pmatrixT}
    x_1   &   0    & \vdots &  0  & f_0 \\
   -x_0   &  x_1   & \ddots & \hdots  & f_1 \\
     0    &  -x_0   & \ddots &   0   &   \cdots      \\
  \cdots  & \cdots & \ddots&  x_1   & f_{d-1} \\
     0    & 0     & \vdots & -x_0  & f_d
\end{pmatrixT}.
\]
Now, we observe that, unless $x_0=x_1=0$, the rank of $M$ decreases at most by 1, and so $\mathrm{ker}(M)$ is at most 1-dimensional. Let $(x_0,x_1,y_0,\dots,y_m)$ be a point of $Y$ so that $\mathrm{ker}(M)$ is one dimensional, and generated by a unique element of the form $(z_1,\dots,z_d,1)$. We observe that the point $(z_1,\dots,z_d,y_0,\dots,y_d)$ satisfies the equations defined by the maximal minors of the matrix
\[
N=\begin{pmatrixT}
    0     &    z_1     &  f_0    \\
   -z_1   &    z_2     &  f_1    \\
  \dots  &   \dots   & \dots  \\
-z_{d-1}  &    z_d     & f_{d-1} \\
     -z_d &     0      & f_{d} 
\end{pmatrixT}.
\]
Similarly, if $(z_1,\dots,z_d)\neq \underline{0}$, the rank of $N$ decreases at most by 1. Moreover, if the rank of $N$ at a point $(z_1,\dots,z_d,y_0,\dots,y_m)$ is 2, there exists a unique element in $\ker(N)$ of the form $(x_0,x_1,1)$ which satisfies the condition $\mathrm{det}(M)=0$. Let us denote by $Y'$ the closed subscheme of $X'=\P^{d-1}\times \P^m(\w)$ defined by the vanishing of the maximal minors of $N$. The scheme $Y'$ is the first degeneracy locus of the morphism of vector bundles
\[
\O^{d+1}\to \O(1,0)^2\oplus\O(0,e)
\]
defined by the matrix $N$. Then we have a well-defined rational map $f\colon Y\dashrightarrow Y'$ of schemes which, set theoretically, is defined as  
\[
f\colon (x_0,x_1,y_0,\dots,y_n) \mapsto (z_1,\dots,z_d,y_0,\dots,y_d). 
\]
In this way, we obtain a bijective correspondence between the matrices of the form of $M$, and the matrices of the form of $N$. These form in $H^0(X,E^{\vee}\otimes F)$ (resp.\ $H^0(X',{E'}^{\vee}\otimes {F'})$) an affine subspace $\A^l$, where $l=(d+1)\cdot h^0\left(\O_{\P(\underline{w})}(e)\right)$. To be more precise, we have an isomorphism of affine spaces
\begin{equation}\label{correspondence}
\begin{tikzcd}
H^0(X,E^{\vee}\otimes F)\supset  \A^l \rar[r,"\cong"]& \A^l \subset H^0(X',{E'}^{\vee}\otimes {F'}),
\end{tikzcd}
\end{equation}
defined in the obvious way.
\vspace{0.2cm}

Now, we want to show that $Y'$ is a normal variety. The indeterminacy locus of $f$ is contained in the set $\{f_0=f_1=\cdots=f_d=0\}$. Since $Y$ is general, we know that the indeterminacy locus of $f$ has codimension at least $d$ in $Y$. By the generality of the $f_i$, arguing as in Lemma \ref{lem:reduced}, it follows that $Y'$ is irreducible, and so is Cohen-Macaulay. Now, we claim that if the irreducible scheme $Y'$ is reduced, then it is a normal variety. Indeed, $f$ is bijective where it is defined and has an inverse $f^{-1}\colon Y' \dashrightarrow Y$ if $Y'$ is reduced. Moreover, if $Y'$ is reduced, the map $f$ is an isomorphism in codimension $1$, hence the scheme $Y'$ is regular in codimension 1, and so normal (since $Y'$ is Cohen-Macaulay). In particular, $f$ is a small $\Q$-factorial modification.
For convenience, we set $E=\O_X^{d+1}$, $F=\O_X(1,0)^d\oplus \O_X(0,e)$, $E'=\O_{X'}^{d+1}$,  $F'=\O_{X'}(1,0)^2\oplus \O_{X'}(0,e)$. Moreover, we denote by $V$ the incidence correspondence 
\[V=\{(x,\phi)\in (\P^{d-1}\times \P^m(\w) ) \times H^0(X,E'^{\vee}\otimes F') \;|\; \text{rk}(\phi_x)\leq 2 \}.\]


By Lemma \ref{lem:CMvar}, the scheme $V$ is a Cohen-Macaulay closed subscheme of $$(\P^{d-1}\times \P^m(\w) ) \times H^0(X,E'^{\vee}\otimes F')$$ of codimension $d-1$.

\underline{Suppose that $1<d<m$.} We show that the morphism of vector bundles associated with the matrix
\[
N'=\begin{pmatrixT}
    0     &    z_1     &  y_0^{e/w_0}    \\
   -z_1   &    z_2     &  y_1^{e/w_1}    \\
  \hdots  &   \hdots   & \hdots  \\
-z_{d-1}  &    z_d     & y_{d-1}^{e/{w_{d-1}}} \\
     -z_d &     0      & y_{d}^{e/w_d} 
\end{pmatrixT}
\]
is sufficiently general in the affine space $\A^l$, which we recall parametrises elements of $H^0(X,E'^{\vee}\otimes F')$ of the form 

\[
\begin{pmatrixT}
    0     &    z_1     &  f_0    \\
   -z_1   &    z_2     &  f_1   \\
  \hdots  &   \hdots   & \hdots  \\
-z_{d-1}  &    z_d     & f_{d-1} \\
     -z_d &     0      & f_d
\end{pmatrixT}.
\]

To this end, consider the morphism $g\colon V\to H^0(X',{E'}^{\vee}\otimes {F'})$. Since $1<d<m$, $g$ is dominant. Indeed, consider the globally generated vector bundle \[\mathscr{E}=\O_{X'}(0,e)^{d+1}.\] By the Bertini-type Theorem for globally generated vector bundles, the zero locus of a general section of $\mathscr{E}$ has the right codimension in $\P^{d-1}\times \P(\w)$. This implies that the general member $\phi$ of $H^0(X,E'^{*}\otimes F')$ has a non-empty degeneracy locus $D_2(\phi)$. In particular, $g$ is surjective, since it is projective. Let $U$ be the open subset of $H^0(X',{E'}^{\vee}\otimes {F'})$ where the fibres of $g$ have the expected dimension $m$ (see\ \cite[Corollary 13.1.5]{EGA4.3} or \cite[Theorem 9.9(b)]{Milne}). By \cite[Theorem 15.1]{Matsumura87}, the fibres of $g$ at the points of $U$ are also equidimensional. Then, since $V$ is Cohen-Macaulay, by the miracle flatness Theorem the morphism $g_{|g^{-1}(U)}\colon g^{-1}(U) \to U$ is flat. Also, $g_{|g^{-1}(U)}$ is projective since it is the base change of a projective morphism. Then by \cite[Appendix E.1, item (11)]{GW10}, the locus $U' \subset U$ where the fibre of $g_{|g^{-1}(U)}$ is reduced is open. It follows that the element $N'$ lies in $U'$, and hence the general fibre of $g_{|g^{-1}(U)}$ is reduced. Since the general member of $H^0(X,\O(d,e))\cong \A^l \subset H^0(X,E^{\vee}\otimes F)$ is a normal and $\Q$-factorial projective variety, then also the general element of $\A^l\subset  H^0(X,E'^{\vee}\otimes F')$ is, via (\ref{correspondence}).

\underline{Suppose $d=m$.} Since $Y$ is general, we may assume that $f_0,\dots,f_{m-2}$ define an integral subscheme of $\P(\w)$, and form a regular sequence. Consider the projection $p_1'\colon Y'\to \P^{d-1}$ and consider the locus $U$ of $\P^{d-1}$ where the fibres of  $p_1'$  have the expected (pure) dimension $1$. This is an open subset of $\P^{d-1}$, by (\cite[Corollary 13.1.5]{EGA4.3}). Since $Y'$ is Cohen-Macaulay, by the miracle flatness Theorem, the morphism ${p_1'}_{|{p_1'}^{-1}(U)}\colon {p_1'}^{-1}(U) \to U$ is flat. In particular, the point $q=(0\colon \dots \colon 0 \colon 1)\in \P^{d-1}$ belongs to $U$, by our choice of the $f_i$. It follows that $p_1'$ is flat at any point of the scheme-theoretic fibre $U_q=\P^{d-1}\times_{\P(\w)} \mathrm{Spec}(k(q))$. But by our choice of $f_0,\dots,f_{m-2}$, the closed subscheme $U_q$ is reduced. This implies that $Y'$ contains a reduced point (see\ \cite[Theorem 23.9]{Matsumura87}), and hence is reduced everywhere since $Y'$ has no embedded points. We conclude that the general member of $\A^l$ is normal as we did above.
\end{proof}



\begin{rmk}
We point out that, in the proof of Proposition \ref{prop:genmembermds}, the strategy to prove the case $d=m$ works also for the case $1<d<m$, but we thought it was instructive to present both strategies. The first strategy shows that a general element of $\A^l$ is a projective variety, starting from a special element in $\A^l$ that is a projective variety. The second strategy directly shows that the general element of $\A^l$ is a projective variety.
\end{rmk}

\begin{cor}\label{cor1}
Let $X$ be a product $\P^1\times \P^m(\w)$, and $\O_X(d,e)$ an effective line bundle with $1\leq d\leq n$ and $e\geq a(\w)$. Then either $Y$ admits a unique small $\Q$-factorial modification, which is an isomorphism if $d=m$, or a unique divisorial contraction. Moreover, if $1<d<m$, we have
\[
\mathrm{Eff}(Y)=\mathrm{Mov}(Y)=\R_{\geq 0}H_1+\R_{\geq 0}(eH_2-H_1).
\] 
If $d=m$, we have
\[
\mathrm{Eff}(Y)=\mathrm{Nef}(Y)=\R_{\geq 0}H_1+\R_{\geq 0}(eH_2-H_1).
\] 
Id $d=1$, we have $$\mathrm{Eff}(Y)=\R_{\geq 0}H_1+\R_{\geq 0}(eH_2-H_1)$$ and $$\mathrm{Mov}(Y)=\mathrm{Nef}(Y)=\R_{\geq 0}H_1+\R_{\geq 0}H_2.$$
In any of these cases, $Y$ is a Mori dream space.
\end{cor}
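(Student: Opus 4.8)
The plan is to treat $Y$ as a variety of Picard number $2$ and to read off every cone from the two natural contractions it carries, together with the small modification produced in Proposition \ref{prop:genmembermds}. Since $Y$ is the general member of a base-point-free ample system and $m\geq 3$ (forced by $\dim X\geq 4$), the Grothendieck--Lefschetz theorem (\cite[Theorem 1]{RS06}), exactly as in Proposition \ref{prop1}, gives $\Cl(Y)\cong \Cl(\P^1)\times\Cl(\P^m(\w))=\Z H_1\oplus\Z H_2$, together with $h^1(Y,\O_Y)=0$. Hence $N^1(Y)_\R$ is a plane, and the whole statement reduces to locating the two boundary rays of each cone and deciding, ray by ray, whether the corresponding contraction is a fibration, a small map, or a divisorial one. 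The two evident contractions are $p_1\colon Y\to\P^1$ and $p_2\colon Y\to\P^m(\w)$, and the behaviour of $p_2$ is governed by the base locus $B=\{f_0=\cdots=f_d=0\}\subset\P^m(\w)$, which by the genericity in Proposition \ref{prop:genmembermds} has codimension $d+1$: over a point $b\in B$ the whole line $\ell_b=\P^1\times\{b\}$ lies in $Y$, whereas off $p_2(B)$ the map $p_2$ is finite of degree $d$.

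First I would compute $\Nef(Y)$ from the Mori cone. Both $H_1=p_1^*\O_{\P^1}(1)$ and $H_2=p_2^*\O_{\P^m(\w)}(1)$ are pullbacks of (semi)ample classes, hence nef and semiample, so $\Nef(Y)\supseteq\R_{\geq0}H_1+\R_{\geq0}H_2$. For the reverse inclusion I produce two extremal curves: a curve $C_1$ moving in a fibre of $p_1$ has $H_1\cdot C_1=0$ and $H_2\cdot C_1>0$, while for $d<m$ the contracted line $C_2=\ell_b$ has $H_2\cdot C_2=0$ and $H_1\cdot C_2=1$. Pairing an arbitrary class $aH_1+bH_2$ against $C_1,C_2$ forces $a,b\geq0$, so $\Nef(Y)=\R_{\geq0}H_1+\R_{\geq0}H_2$ whenever $d<m$. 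When $d=m$ the locus $B$ is empty, $p_2$ is finite and $H_2$ becomes ample (interior): the $C_2$-wall disappears and is replaced by a fibre of the second fibration $Y\to\P^{d-1}$ coming from the modification below, which is orthogonal to $eH_2-H_1$, yielding $\Nef(Y)=\R_{\geq0}H_1+\R_{\geq0}(eH_2-H_1)$.

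Next I treat $\mathrm{Eff}(Y)$ and $\mathrm{Mov}(Y)$. The ray $H_1$ is effective ($x_0,x_1$ are sections of $\O(1,0)$). For the opposite ray I use the explicit divisor $Z=\overline{(\{f_0=0\}\cap Y)\setminus\{x_1=0\}}$: on $Y$ the vanishing $x_1=0$ forces $f_0=0$, so $\{x_1=0\}|_Y\sim H_1$, and the class $\{f_0=0\}|_Y\sim eH_2$ splits as $H_1+Z$, whence $[Z]=eH_2-H_1$ is effective. For the upper bound I pair against two moving curves: $C_1$ gives $b\geq0$, and a moving curve $C_2'$ orthogonal to $eH_2-H_1$ (a fibre of the second fibration for $d>1$, transported through the modification, or a general curve avoiding $Z$ for $d=1$) satisfies $H_1\cdot C_2'=e\,(H_2\cdot C_2')$ and gives $ea+b\geq0$; hence $\overline{\mathrm{Eff}}(Y)=\R_{\geq0}H_1+\R_{\geq0}(eH_2-H_1)$. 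To separate movable from merely effective classes I run the chamber decomposition across the wall $H_2$. For $1<d<m$ the contraction $p_2$ is small, and the modification $f\colon Y\dashrightarrow Y'$ of Proposition \ref{prop:genmembermds} realises the adjacent chamber as $f^*\Nef(Y')=\R_{\geq0}H_2+\R_{\geq0}(eH_2-H_1)$, with $eH_2-H_1=f^*(p_1')^*\O_{\P^{d-1}}(1)$ nef on $Y'$; thus $\mathrm{Mov}(Y)=\mathrm{Eff}(Y)=\R_{\geq0}H_1+\R_{\geq0}(eH_2-H_1)$. For $d=1$ the locus $B$ has codimension $2$, so $Z$ is exactly the exceptional divisor of the divisorial contraction $p_2$ and is rigid, there is no flop across $H_2$, and $\mathrm{Mov}(Y)=\Nef(Y)=\R_{\geq0}H_1+\R_{\geq0}H_2\subsetneq\mathrm{Eff}(Y)$. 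For $d=m$ the map $f$ is biregular and all three cones coincide.

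Finally, for the Mori dream space conclusion I verify the defining conditions: $h^1(Y,\O_Y)=0$ was noted above, $\Nef(Y)$ is rational polyhedral with semiample generators, and $\mathrm{Mov}(Y)$ is the union of the nef cones of the finitely many small modifications of $Y$, namely $Y$ together with $Y'$ when $1<d<m$, and $Y$ alone otherwise; the toric quotient presentation then follows from \cite[Proposition 2.11]{HK00}. In Picard number $2$ the movable cone has at most two chambers, so the modification is unique -- the small map $f$ for $1<d<m$, the divisorial contraction $p_2$ for $d=1$, and the identity for $d=m$. The main obstacle is the precise identification of the second boundary ray as $eH_2-H_1$, i.e. pinning down the class of the flopped nef divisor $f^*(p_1')^*\O_{\P^{d-1}}(1)$ (respectively of the contracted divisor for $d=1$), for which the explicit representative $Z$ and the incidence correspondence \eqref{correspondence} of Proposition \ref{prop:genmembermds} are the decisive tools; the companion point is checking that the two moving curves $C_1,C_2'$ really span the dual of $\overline{\mathrm{Eff}}(Y)$.
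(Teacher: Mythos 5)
Your proposal is correct in substance and follows the same overall route as the paper: both arguments hinge on the small $\Q$-factorial modification $f\colon Y\dashrightarrow Y'$ of Proposition \ref{prop:genmembermds}, identify $eH_2-H_1$ as the second extremal ray, and verify the Mori dream space conditions directly. The differences are in technique. For the effective cone, the paper computes the push-forwards $f_*(H_1)=eH_2'-H_1'$ and $f_*(eH_2)=eH_2'$ by tracking $\{x_0=0\}\cap Y$ through the matrix description of $Y'$, deduces that $eH_2-H_1$ is effective but not big (its push-forward $H_1'$ is not big), and concludes extremality from $\rho(Y)=2$; you instead obtain effectivity from the splitting $eH_2\sim\{f_0=0\}_{|Y}=H_1+[Z]$ (which is exactly the paper's $d=1$ argument, generalised to all $d$), and the upper bound by pairing against covering families of curves ($C_1$ in fibres of $p_1$, and curves orthogonal to $eH_2-H_1$ transported through $f$). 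Both are sound; your moving-curve argument is more self-contained, while the paper's push-forward computation simultaneously yields the identification $f^*(H_1')=eH_2-H_1$ that you also need and only sketch. Your explicit dual curves for $\Nef(Y)$ make precise what the paper merely asserts for $d<m$, and the sandwich $\Nef(Y)\cup f^*\Nef(Y')\subseteq\Mov(Y)\subseteq\mathrm{Eff}(Y)$ is a clean way to get $\Mov(Y)=\mathrm{Eff}(Y)$ for $1<d<m$.

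One justification you give is false as a general principle: \q{in Picard number $2$ the movable cone has at most two chambers.} A Mori dream space of Picard rank $2$ can have its movable cone decomposed into a longer chain of Mori chambers, corresponding to a chain of flops $Y\dashrightarrow Y_1\dashrightarrow Y_2\dashrightarrow\cdots$, each nef cone sharing a wall with the next; Picard rank $2$ bounds the dimension of the chambers, not their number. This does not sink your proof, because uniqueness of the modification follows from what you have already established: for $1<d<m$ you show $\Nef(Y)\cup f^*\Nef(Y')=\mathrm{Eff}(Y)\supseteq\Mov(Y)$, so the two known chambers exhaust $\Mov(Y)$, and since by \cite{HK00} the chambers attached to distinct small $\Q$-factorial modifications have disjoint interiors, there is no room for a third; for $d=1$ you show $\Mov(Y)=\Nef(Y)$, so the identity is the only small modification and $p_2$ is the unique divisorial (non-fibre-type) extremal contraction; for $d=m$ all cones coincide. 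Replace the chamber-count claim by this exhaustion argument and the proof is complete.
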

\begin{proof}
Consider the small $\Q$-factorial modification $f\colon Y \dashrightarrow Y'$ constructed in Proposition \ref{prop:genmembermds}. Given the two projections $p_1'\colon Y' \to \P^{d-1}$ and $p_2'\colon Y' \to \P(\w)$, we denote by $H_1'$ (resp.\ $H_2'$) the pull-back ${p_1'}^*(\mathscr{O}_{\P^{d-1}}(1))$ (resp.\ the reflexive sheaf associated with the push-forward of $H_2$ via $f$).
 We compute $f_*(H_1)$ and $f_*(eH_2)$. The interesting case is $f_*(H_1)$. To compute it, pick for example $\{x_0=0\} \cap Y$. The image of this divisor of $Y$ via $f$ is contained in the set of points of $Y'$ satisfying the maximal minors of the matrix
\[
\begin{pmatrixT}
    0     &    z_1     &  f_0    \\
   -z_1   &    z_2     &  f_1    \\
  \hdots  &   \hdots   & \hdots  \\
-z_{d-1}  &    z_d     & f_{d-1} \\
     -z_d &     0      & 0
\end{pmatrixT}.
\]
Since we started from points of $Y$ having $x_0=0$, it follows that the image of $\{x_0=0\}\cap Y$ via $f$ is the zero locus of a section of $eH_2'-H_1'$, and so $f_*(H_1)=eH_2'-H_1'$. It is straightforward to check that $f_*(eH_2)=eH_2'$. Note that $eH_2-H_1$ is not big, for example, because $f_*(eH_2-H_1)=H'_1$ is not big in $Y'$. Then the ray $\R_{\geq 0}(eH_2-H_1)$ is extremal in $\mathrm{Eff}(Y)$. Now, the Picard number of $Y$ is $2$, and so $\mathrm{Eff}(Y) =\R_{\geq 0}(eH_2-H_1)+\R_{\geq 0}H_1$. Any nef divisor on $Y$ is semiample. Indeed, if $d<n$, we have $\mathrm{Nef}(Y)=\R_{\geq 0} H_2+ \R_{\geq 0} H_1$, and $H_2,H_1$ are semiample. If $d=m$, since the map $f \colon Y \DashedArrow Y'$ is an isomorphism, the Cartier divisor $eH_2-H_1$ is nef, and induces a morphism $Y \to \P^{d-1}$. This is enough to conclude that $Y$ is a Mori dream space. So far we have dealt with the case $2\leq d \leq n$. Assume now $d=1$. In this case, $Y$ is the blow-up of $\P(\w)$ along the ideal sheaf $\mathscr{I}$ generated by $f_0,f_1$, and we have a divisorial contraction $p\colon Y \to \P(\w)$. The exceptional divisor is given by $eH_2-H_1$. Indeed, an equation for $Y$ is $x_0f_0+x_1f_1=0$. Then, if $f_0=0$, either $x_1=0$, or $f_1=0$. Taking out $\{x_1=0\}$ (which means taking a section of $eH_2-H_1$), gives exactly $f_0=f_1=0$, i.e.\ the exceptional divisor. Also in this case $Y$ is a Mori dream space because all the conditions in the definition of Mori dream space are satisfied. Moreover, $\mathrm{Mov}(Y)=\mathrm{Nef}(Y)=\R_{\geq 0} H_2+ \R_{\geq 0} H_1$.
\end{proof}

\begin{prop}\label{prop:coxring}
Let $X$ be a product $\P^1\times \P^m(\w)$, with $m\geq 3$. Consider a line bundle $\O(d,e)$, with $1\leq d\leq m$, $e= k\cdot a(\w)$, and $k\geq2$. Then the Cox ring of the general member 
$$Y=\{x_0^df_0+x_0^{d-1}x_1f_1+\dots+x_0x_1^{d-1}f_{d-1}+x_1^d=0\}$$
of $|\O(d,e)|$ is 
$$\mathrm{Cox}(Y)\cong \K[x_0,x_1,y_0,\dots,y_n,z_1,\dots,z_d]/I,$$
where $I=(f_0+x_0z_1,f_1-x_0z_1+x_1z_2,\dots,f_{d-1}-x_0z_{d-1}+x_1z_d,f_d-x_0z_d)$.
\end{prop}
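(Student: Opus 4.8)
The plan is to produce a graded surjection from $R:=\K[x_0,x_1,y_0,\dots,y_m,z_1,\dots,z_d]/I$ onto $\Cox(Y)$ and then conclude, exactly as in Proposition \ref{prop1}, by comparing the Krull dimensions of two integral domains. I would first fix the grading on $R$ by $\Cl(Y)=\Z H_1\oplus \Z H_2$, setting
\[
\deg(x_j)=H_1,\qquad \deg(y_j)=w_jH_2,\qquad \deg(z_i)=eH_2-H_1,
\]
so that every generator $r_i$ of $I$ is homogeneous of degree $eH_2$. To build a homomorphism $\Phi\colon R\to\Cox(Y)$ I send $x_j,y_j$ to the tautological coordinate sections and use the small $\Q$-factorial modification $f\colon Y\dashrightarrow Y'$ of Proposition \ref{prop:genmembermds}: by Corollary \ref{cor1} one has $f_*(eH_2-H_1)=H_1'$, so $H^0(Y,eH_2-H_1)\cong H^0(Y',H_1')$ is spanned by the coordinates $z_1,\dots,z_d$ of the $\P^{d-1}$ factor, and I send $z_i$ there. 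The relations $r_i$ map to zero because on the open locus where $f$ is an isomorphism they express exactly that $(x_0,x_1,1)$ lies in the left kernel of the matrix $N$ from Lemma \ref{lem:reduced}, i.e.\ $(x_0,x_1,1)\,N=0$; since $Y$ is normal and this locus has complement of codimension $\geq 2$, the identities propagate to all of $\Cox(Y)$.

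Next I would show that $R$ is an integral domain of Krull dimension $m+2$. The $d+1$ equations $r_i$ are linear in the $z_i$, so on $\{(x_0,x_1)\neq 0\}$ one solves recursively for $z_d,z_{d-1},\dots,z_1$ and is left with the single equation $g=0$ defining $Y$; hence $\mathrm{Spec}(R)\cap\{(x_0,x_1)\neq 0\}$ is the affine cone over an open subset of $Y$, in particular irreducible of dimension $m+2$. On the complementary locus $\{x_0=x_1=0\}$ the relations force $f_0=\dots=f_d=0$, so this locus has dimension $m<m+2$ and is too small to carry a component. Combining this with the Cohen--Macaulayness of the relevant incidence variety furnished by Lemma \ref{lem:CMvar}, which shows that the $r_i$ form a regular sequence and that $R$ is equidimensional with no embedded primes, I conclude that $R$ is a reduced, irreducible complete intersection, hence a domain of dimension $\dim(Y)+\rho(X)=m+2$.

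The main obstacle is the surjectivity of $\Phi$, i.e.\ the statement that $x_j,y_j,z_i$ generate $\Cox(Y)=\bigoplus_{a,b}H^0(Y,aH_1+bH_2)$. Here I would realise $Y$ inside the toric variety $T$ that governs its birational geometry via \cite[Proposition 2.11]{HK00} (the one analysed in Subsection \ref{toricvariety}), whose Cox ring is precisely $\K[x_0,x_1,y_0,\dots,y_m,z_1,\dots,z_d]$ and in which $Y$ is the codimension-$(d+1)$ complete intersection cut out by the $r_i$. Surjectivity of $\Phi$ then reduces to the restriction map $\Cox(T)\to\Cox(Y)$ being onto, which via the Koszul complex of the $r_i$ becomes a cohomology-vanishing statement on $T$; this is exactly the type of computation carried out by Okawa \cite{Okawa} and Ottem \cite{Ottem15}, which I would adapt to the weighted setting. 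The hypothesis $k\geq 2$ enters precisely at this step, guaranteeing the genericity and the surjectivity of the relevant multiplication maps that force the vanishing and ensure no further generators are needed; the boundary case $k=1$ genuinely differs and is excluded.

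Finally, once $\Phi$ is known to be a graded surjection between integral domains of the same finite Krull dimension $m+2$, its kernel is a height-zero prime of the domain $R$ and hence is the zero ideal, so $\Phi$ is an isomorphism. This yields the asserted presentation $\Cox(Y)\cong R$. I expect the domain and dimension bookkeeping to be routine given the Cohen--Macaulay machinery already in place, and the cohomological surjectivity on $T$ to be the only genuinely delicate point.
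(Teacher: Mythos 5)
Your overall skeleton --- a graded surjection $\Phi\colon R\to\Cox(Y)$ between integral domains of the same Krull dimension $m+2$, hence an isomorphism --- is exactly the paper's endgame, and your hands-on proof that $R$ is a domain of dimension $m+2$ (solving for the $z_i$ over $(x_0,x_1)\neq 0$, bounding the locus $x_0=x_1=0$, then using the complete-intersection structure to exclude embedded components) is sound, arguably more self-contained than the paper's, which instead gets this by writing $R=\Cox(\P(\mathscr{E}))/I_Y$ for the toric bundle $\P(\mathscr{E})$. The genuine gap is where you yourself locate the difficulty: the surjectivity of $\Phi$. You reduce it to surjectivity of the restriction $\Cox(T)\to\Cox(Y)$ and propose to prove that \q{via the Koszul complex of the $r_i$}, \q{adapting} the cohomology-vanishing computations of Ottem to the weighted setting. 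That adaptation is not a routine step but the entire remaining problem: on the singular toric variety $T$ the degrees one must control are Weil classes that are not Cartier (e.g.\ $aH_1+bH_2$ with $b$ not divisible by all the weights), the graded pieces of $\Cox(T)$ and $\Cox(Y)$ are sections of reflexive non-invertible sheaves, tensoring the Koszul resolution of $\O_Y$ by such a sheaf $\O_T(D)$ need not remain exact (there are $\mathrm{Tor}$ correction terms supported on the singular locus), and Ottem's vanishing statements are proved for line bundles on a smooth ambient. Nothing in your proposal addresses these issues, and your assertion that the hypothesis $k\geq 2$ \q{enters precisely at this step} is a guess rather than an argument (also, Okawa's paper contains no such Koszul computation; it only provides transfer of the Mori dream property along surjections).

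For contrast, the paper proves surjectivity without doing any cohomology in the weighted setting: it pulls $Y$ back along the finite quotient $\pi\colon \P^1\times\P^m\to\P^1\times\P^m(\w)$ to a smooth hypersurface $Y'$ of Picard number $2$, invokes Ottem's presentation $\Cox(Y')\cong\K[x_0',x_1',y_0',\dots,y_m',z_1',\dots,z_d']/I'$ as a black box (after using Okawa's theorem and Corollary \ref{cor1} to know $Y'$ is a Mori dream space), embeds $\Cox(Y)\hookrightarrow\Cox(Y')$ by pull-back of Weil divisors along $Y'\to Y$, and identifies the image of $\Cox(Y)$ with the subring $\K\left[x_0',x_1',(y_0')^{w_0},\dots,(y_m')^{w_m},z_1',\dots,z_d'\right]/I'$, which by construction is also the image of the composite $R\to\Cox(Y)\hookrightarrow\Cox(Y')$; injectivity of $\Cox(Y)\to\Cox(Y')$ then forces $R\to\Cox(Y)$ to be onto, and the Krull-dimension argument concludes as you propose. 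So the fix for your writeup is to replace the unexecuted toric-cohomology step by this descent through the smooth cover, where the hypothesis on $e=k\cdot a(\w)$ is what guarantees Ottem's theorem applies to the pulled-back hypersurface.
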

\begin{proof}
Set $X'=\P^1\times \P^n$ and consider the projection $\pi \colon X' \to X$. The pull-back $\pi^*|\O_X(d,e)|$ of the complete linear series $|\O_X(d,e)|$ is an ample base point-free linear series $V\subset |\O_{X'}(d,e)|$. A straightforward computation shows that $H^0(Y,eH_2-H_1)=d$. Let $z_1,\dots,z_d$ be a basis for this space of global sections. 
By the Bertini Theorem and the Grothendieck-Lefschetz Theorem (\cite[Theorem 1]{RS06}), the pull-back $Y'=\pi^*(Y)$ of the general member $Y$ of $|\O_X(d,e)|$ is a smooth projective variety of Picard number 2. The Cartier divisor $Y'=\pi^*(Y)$ coincides with the inverse image scheme $X'\times_X Y$ under $\pi$. Now, since by Corollary \ref{cor1} $Y$ is a Mori dream space, by \cite[Theorem 3.1]{Okawa} also $Y'$ is a Mori dream space. Set $H_1'=\O_{X'}(1,0)_{|Y'}$ and $H_2'=\O_{X'}(0,1)_{|Y'}$. Moreover, let $x_0',x_1'$ and $\{y_j'\}_j$ be the coordinates of $\P^1$ and $\P^m$ respectively. Note that $H^0(Y',eH_2'-H_1')=d$. By \cite[Theorem 1.1(iv)]{Ottem15}, the Cox ring of $Y'$ is $$\mathrm{Cox}(Y')\cong \K[x_0',x_1',y_0',\dots,y_n',z_1',\dots,z_d']/I',$$
where $$I'=(f_0'+x_0'z_1',f_1'-x_0'z_1'+x_1'z_2',\dots,f_{d-1}'-x_0'z_{d-1}'+x_1'z_d',f_d'-x_0'z_d'),$$
and $z_1',\dots,z_d'$ have degree $eH_2'-H_1'$. Now, consider the vector bundle $\mathscr{E}=\O_X(1,0)^d\oplus \O_X(0,e)$ and the associated projective bundle $q\colon \P(\mathscr{E})\to \P^1 \times \P(\underline{w})$. Recall that the ideal of $Y$ in the toric variety $\P(\mathscr{E})$ is $$I_Y=(f_0+x_0z_1,f_1-x_0z_1+x_1z_2,\dots,f_{d-1}-x_0z_{d-1}+x_1z_d,f_d-x_0z_d,t),$$ where $t$ is (up to scalars) the unique section of $H^0(\P(\mathscr{E}),\O_{\P(\mathscr{E})}(1)\otimes q^*(\O(0,-e)))$, whose zero locus is the exceptional divisor of the unique extremal divisorial contraction of $\P(\mathscr{E})$. 
It follows that the ring $$R=\mathrm{Cox}(\P(\mathscr{E}))/(I_Y)\cong \K[x_0,x_1,y_0,\dots,y_d,z_1,\dots,z_d]/I$$ is an integral domain. Now, choose for $\mathrm{Cox}(Y')$ the grading induced by $\{H_1,H_2\}\subset \mathrm{Cl}(Y)$.
Firstly, we observe that we have a well-defined ring homomorphism $R\to \mathrm{Cox}(Y')$, defined by $x_i\mapsto x_i'$, $z_i\mapsto z_i'$ and $y_i\mapsto (y_i')^{\frac{a(\underline{w})}{w_i}}$. Secondly, we note that the ring homomorphism $R\to \mathrm{Cox}(Y')$ factors through $\mathrm{Cox}(Y)$, so that we have the factorisation $R\to \mathrm{Cox}(Y) \to \mathrm{Cox}(Y')$, where $\mathrm{Cox}(Y) \to \mathrm{Cox}(Y')$ is obtained by pull back of Weil divisors under $Y'\to Y$. The latter is a graded morphism of graded rings, with respect to the gradings we have chosen for $\mathrm{Cox}(Y)$ and $\mathrm{Cox}(Y')$. The image of $\mathrm{Cox}(Y) \to \mathrm{Cox}(Y')$ is the subring $$\K\left[x_0',x_1',(y_0')^{w_0},\dots,(y_n')^{w_n},z_1',\dots,z_d'\right]/I'.$$ The Cox ring of $Y$ has Krull dimension equal to $\mathrm{dim}(Y)+\rho(X)$. Then $R\to \mathrm{Cox}(Y)$ is a surjective morphism of integral domains with the same Krull dimension $m+2$.  This implies $R\cong \mathrm{Cox}(Y)$.
\end{proof}

\begin{prop}\label{prop:notMDS}
Let $X$ be a product $\P^1\times \P^m(\w)$, with $m\geq 3$, and $\O(d,e)$ a line bundle, with $d>m$, and $e\geq 2$. Then the very general member of $|\O_X(d,e)|$ is not a Mori dream space.
\end{prop}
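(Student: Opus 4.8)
The plan is to prove that $Y$ is not a Mori dream space by showing that its effective cone is not closed, i.e.\ $\mathrm{Eff}(Y) \subsetneq \overline{\mathrm{Eff}}(Y)$. Since the effective cone of a Mori dream space is rational polyhedral, in particular closed, this strict inclusion suffices. As in the proof of Proposition \ref{prop1}, the Grothendieck-Lefschetz Theorem gives $\Cl(Y) \cong \Z^2$ with basis $\{H_1, H_2\}$, so $N^1(Y)_{\R}$ has rank $2$ and both cones in question are two-dimensional.

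First I would pin down the two extremal rays of $\overline{\mathrm{Eff}}(Y)$. The projection $p_1 \colon Y \to \P^1$ realises $H_1$ as the pull-back of an ample class from a curve; by Proposition \ref{prop:Itakadim} we have $\kappa(H_1) = 1 < m = \dim(Y)$, so $H_1$ is nef but not big and $\R_{\geq 0} H_1$ is an extremal ray. On the other hand, writing $Y = \{x_0^d f_0 + \cdots + x_1^d f_d = 0\}$, the hypothesis $d > m$ forces the $d+1$ forms $f_0, \dots, f_d$ to have empty common zero locus in $\P^m(\w)$ for very general $Y$; hence no fibre of $p_2 \colon Y \to \P^m(\w)$ is positive-dimensional, $p_2$ is finite, and $H_2$ is ample. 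Therefore $\overline{\mathrm{Eff}}(Y)$ is spanned by $\R_{\geq 0} H_1$ and a second extremal ray $\rho$ lying in the region of negative $H_1$-coefficient; since $eH_2 - H_1$ is effective (indeed $h^0(Y, eH_2 - H_1) = d$), the ray $\rho$ is at least this steep.

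The heart of the matter is to show that $\rho$ carries no effective divisor. To compute $H^0(Y, bH_2 - aH_1)$ with $a \geq 1$ rigorously I would pull $Y$ back through the finite quotient $\pi \colon X' = \P^1 \times \P^m \to \P^1 \times \P^m(\w)$ of Proposition \ref{prop1}, so that $H^0(Y, bH_2 - aH_1) = H^0(Y', bH_2' - aH_1')^{G}$, where $Y' = \pi^*(Y)$, $H_i' = \pi^* H_i$, and $G$ is the Galois group of $\pi$. On the smooth product $X'$ the ideal-sheaf sequence of $Y'$ twisted by $\O_{X'}(-a,b)$, together with the Künneth formula, identifies $H^0(Y', bH_2' - aH_1')$ with the kernel of the multiplication-by-$f'$ map
\[
H^1(\P^1, \O(-a-d)) \otimes H^0(\P^m, \O(b-e)) \longrightarrow H^1(\P^1, \O(-a)) \otimes H^0(\P^m, \O(b)),
\]
where $f' = \pi^*(f)$. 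Following Ottem's analysis of this map in the smooth case \cite{Ottem15}, for $f$ very general it has maximal rank; a comparison of dimensions as $(a,b)$ vary then determines the slope of $\rho$ and shows the map to be injective for all classes proportional to $\rho$. Taking $G$-invariants, this gives $H^0(Y, k\rho) = 0$ for every $k \geq 1$, while every class strictly between $H_1$ and $\rho$ is effective. As non-effectivity of $\rho$ is a countable intersection of conditions that each hold generically, it holds for very general $Y$, and we conclude $\rho \in \overline{\mathrm{Eff}}(Y) \setminus \mathrm{Eff}(Y)$.

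The step I expect to be the main obstacle is the maximal-rank (equivalently, kernel-vanishing) statement for the multiplication map, complicated by the fact that $f' = \pi^*(f)$ is not a general element of $H^0(X', \O(d,e))$ but a $G$-invariant one. One must therefore show that the generic rank survives restriction to the $G$-invariant locus: I would do this by degenerating $f$ to a suitable ($G$-invariant) monomial configuration for which the rank of the multiplication map can be read off directly, and then check that the resulting numerical bound exactly matches the slope of $\rho$, so that no integral multiple of the boundary class acquires a section while all interior integral classes do.
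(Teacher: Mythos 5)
Your overall strategy --- exhibit a class $\rho$ on the boundary of $\overline{\mathrm{Eff}}(Y)$ no multiple of which is effective --- is the same as the paper's, and your reduction of $H^0(Y,bH_2-aH_1)$ to the $G$-invariant part of the kernel of the multiplication map on the smooth cover is correct. But the proof has a genuine gap exactly where you flag it, and it is not a technicality: the maximal-rank statement you need concerns the $G$-invariant form $f'=\pi^*(f)$, and Ottem's result holds for the \emph{very general} element of $H^0(X',\O_{X'}(d,e))$, which says nothing about the invariant locus --- a proper closed subvariety that may well lie inside the countable union of subvarieties excluded by \q{very general}. Your proposed fix (degenerate to an invariant monomial configuration) names the problem rather than solving it: after taking $G$-invariants, the relevant map is the weighted multiplication map $H^1(\P^1,\O(-a-d))\otimes H^0(\P(\w),\O(b-e))\to H^1(\P^1,\O(-a))\otimes H^0(\P(\w),\O(b))$, whose source and target dimensions are values of the Hilbert quasi-polynomial of $\K[y_0,\dots,y_m]$ with $\deg y_i=w_i$. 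To run your argument you must check, for \emph{every} $k\geq 1$, that at $(a,b)=k(d,me)$ the source dimension does not exceed the target dimension (if this fails for a single $k$, then $k\rho$ is effective for pure dimension reasons, $\mathrm{Eff}(Y)$ is closed, and the strategy collapses), and then produce an invariant specialization achieving injectivity there. Because of the periodic fluctuations of weighted Hilbert functions, neither step reduces to Ottem's smooth computation, and neither is carried out. Everything else in your write-up is bookkeeping; this is the entire proof.

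The paper avoids the equivariant problem by running the comparison in the opposite direction, and you already hold the needed tool, since you invoke Proposition \ref{prop:Itakadim} to compute $\kappa(H_1)$. Instead of pulling the very general weighted hypersurface back to a special (invariant) smooth one, pull a very general smooth hypersurface over to the weighted space: the finite surjective morphism $g\colon \P^1\times\P^m(\w)\to\P^1\times\P^m$, $(x_0,x_1,y_0,\dots,y_m)\mapsto (x_0,x_1,y_0^{a(\w)/w_0},\dots,y_m^{a(\w)/w_m})$, pulls $|\O_{\P^1\times\P^m}(d,e/a(\w))|$ back into $|\O_X(d,e)|$ (this requires $e\geq 2a(\w)$, the regime of Theorem \ref{thmA}(3)). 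For very general $Y'$ of bidegree $(d,e/a(\w))$, \cite[Theorem 5.6]{Ottem15} applies as a black box: $\frac{me}{a(\w)}H_2'-dH_1'$ is pseudo-effective with negative Iitaka dimension. Then $Y=g^*(Y')$ carries the pseudo-effective class $meH_2-dH_1=g^*\bigl(\frac{me}{a(\w)}H_2'-dH_1'\bigr)$, which has $\kappa=-\infty$ by Proposition \ref{prop:Itakadim} applied to the finite surjective morphism $Y\to Y'$; hence $\mathrm{Eff}(Y)$ is not closed and $Y$ is not a Mori dream space. Finally, semicontinuity of $h^0$ together with \cite[Proposition 1.4.14]{Laz} propagates the conclusion from these special members $g^*(Y')$ to the very general member of the full linear series $|\O_X(d,e)|$ --- the same countability argument you use at the end of your own proposal. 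This reversal replaces the missing equivariant maximal-rank lemma with results already available.
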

\begin{proof}
Set $X'=\P^1\times \P^m$. Consider the morphism $g \colon X\to X'$, defined by
\[
g\colon (x_0,x_1,y_0,\dots,y_m) \mapsto (x_0,x_1,y_0^{a(\w)/w_0},\dots,y_m^{a(\w)/w_m}).
\]
Suppose that $d>m$ and $e\geq 2\cdot a(\w)$. Recall that $a(\w):=\mathrm{l.c.m.}\{w_i\}_i$.
We observe that the complete and base point-free linear series $|\O_{X'}(d,e/a(\w))|$ is pulled back to a base point-free linear series $V\subset |\O_X(d,e)|$. By \cite[Theorem 5.6]{Ottem15}, the very general member $Y'$ of $|\O_{X'}(d,e/a(\w))|$ is a normal variety and is not a Mori dream space, because its effective cone is not closed. To be more precise, the divisor $\frac{me}{a(\w)}H_2- dH_1$ on $Y'$ is pseudo-effective (actually nef), but it has negative Itaka dimension. By the very generality, the hypersurface $Y'$ is pulled back via $g$ to an irreducible and normal member of $V$, by \cite[Theorem 1]{RS06} and the Bertini Theorem. Recall that $Y=g^*(Y')$ coincides with the inverse image scheme $X\times_{X'} Y'$. The Cartier divisor  $g^*(\frac{me}{a(\w)}H_2'- dH_1')=meH_2-dH_1$ on $Y$ is pseudo-effective. But, by Proposition \ref{prop:Itakadim}, we have $\kappa(meH_2-H_1)=-\infty$. Then $Y$ is not a Mori-dream space. It follows from the semicontinuity Theorem (see\ \cite[Theorem III.12.8]{Har}) and \cite[Proposition 1.4.14]{Laz} that the very general member of $|\O(d,e)|$ is not a Mori dream space. To be more precise, the very general deformation of $Y$ in its linear series has a non-closed effective cone. 
\end{proof}

We conclude this section by proving Theorems \ref{thmA} and \ref{thmB}.

\begin{proofA}
Item (1) is the content of Proposition \ref{prop1}, item (2) is the content of 
Corollary \ref{cor1}, and Propositions \ref{prop:coxring}, and item (3) is the content of \ref{prop:notMDS}. 
To show item (4), set $X'=\P^1\times \P^n$ and consider the morphism $g \colon X \to X'$, defined by
\[
g\colon (x_0,x_1,y_0,\dots,y_m) \mapsto (x_0,x_1,y_0^{a(\w)/w_0},\dots,y_m^{a(\w)/w_m}),
\]
as in Proposition \ref{prop:notMDS}. Let $Y'$ be a general hypersurface in $|\O_{X'}(d,1)|$. We know that $Y'$ is a projective bundle over $\P^1$, as the Brauer group of $\P^1$ is trivial. In particular, $Y'$ is a toric variety (and so a Mori dream space), as every vector bundle on $\P^1$ splits as a direct sum of line bundles. We can assume that $Y'$ is pulled back to a normal and $\Q$-factorial member in the linear series $V=g^*|\O_{X'}(d,1)|\subset |\O_X(d,a(\underline{w}))|$, with class group of rank $2$. Since $Y'$ is a Mori dream space and the class group of $Y$ has rank 2, also $Y$ is a Mori dream space, by \cite[Theorem 3.1]{Okawa}. This concludes the proof of the theorem.
\end{proofA}

The main difference between Theorem \ref{thmA} and Theorem \ref{thmB} is that for threefolds we do not have the Grothendieck-Lefschetz Theorem \cite[Theorem 1]{RS06}, so a priori we cannot say anything about the class group of a general member of an ample and base point-free linear series. However, we have a weaker result, the Noether-Lefschetz Theorem \cite[Theorem 1]{RS09}, which, under certain assumptions, provides information about the class group of a very general member of an ample and base point-free linear series on a normal threefold.

\begin{proofB}
Under the assumption that $K_X$ is Cartier, so that $X=\P(1,1,2)$ or $X=\P(1,2,3)$, the hypotheses of \cite[Theorem 1]{RS09} are satisfied in the first case if and only if $d\geq 2$ and $e\geq 4$, and in the second case if and only if $d\geq 2$ and $e\geq 6$. 
\begin{itemize}
\item Suppose $X=\P(1,1,2)$. If $d=2$ and $e=4k$, we argue as in Proposition \ref{prop:genmembermds} and Corollary \ref{cor1} to conclude that the very general member of $|\O_X(2,2k)|$ is a Mori dream space and to compute the various cones. We argue as in Proposition \ref{prop:coxring} to compute its Cox ring. If $d\geq3$ and $e=2k$, we argue as in Proposition \ref{prop:notMDS} to conclude that the very general member of $|\O_X(d,2k)|$ is not a Mori dream space. If $d=1$ and $e=2$, or $d=2$ and $e=2$, we consider the quotient morphism $\P^1\times \P^2 \to \P^1\times \P(1,1,2)$ and observe that the general member of $|\O_X(1,2)|$ (resp.\ $|\O_X(2,2)|$) is pulled back to a Del Pezzo surface of degree 5 (resp.\ 2). Then we use \cite[Theorem 1.1]{Okawa} to conclude that the general member of $|\O_X(1,2)|$ (resp.\ $|\O_X(2,2)|$) is a Mori dream space.
\item Suppose $X=\P(1,2,3)$. If $d=2$ and $e=6k$, we argue as in Proposition \ref{prop:genmembermds} and Corollary \ref{cor1} to conclude that the very general member of $|\O_X(2,6k)|$ is a Mori dream space and to compute the various cones. We argue as in Proposition \ref{prop:coxring} to compute its Cox ring. If $d\geq3$ and $e=6k$, we argue as in Proposition \ref{prop:notMDS} to conclude that the very general member of $|\O_X(d,e)|$ is not a Mori dream space.
\end{itemize}
\end{proofB}

\subsection{The birational geometry of $Y$ via toric geometry}\label{toricvariety}

The purpose of this subsection is to describe the toric variety inducing the birational geometry of a general hypersurface $Y$ of a certain bidegree in $\P^1\times \P(\underline{w})$.
\medskip

Let $Y$ be a general hypersurface in a linear series $|\O_X(d,e)|$, where $X=\P^1 \times \P^m(\underline{w})$ is singular, $m\geq 3$, $1\leq d<n$, and $e=ka(\underline{w})$ for some positive integer $k$. Consider the vector bundle $\mathscr{E}=\O_X(1,0)^d\oplus\O_X(0,e)$. Set $h^0(\P(\underline{w}),\O_{\P(\underline{w})}(e))=N+1$. The vector bundle $\mathscr{E}$ is globally generated, and as generators we can choose $(0,\dots,0,s_0),\dots,(0,\dots,0,s_N)$, where $\{s_i\}_i$ is a basis for $H^0(X,\O_X(0,e))$, and, if $\O_X(1,0)$ occupies the $i$th position in $\mathscr{E}$, so that $1\leq i\leq d$, we add $(0,\dots,x_0,\dots,0)$ and $(0,\dots,x_1,\dots,0)$ to the set of generators. Here $x_0,x_1$ are the coordinates of $\P^1$.
\medskip

Then we have a surjective morphism of vector bundles $\O_X^{2d+N+1}\to \mathscr{E}$, which in turn induces a closed immersion $\P(\mathscr{E})\hookrightarrow \P^1\times \P(\underline{w}) \times \P^{2d+N}$. The equations of $\P(\mathscr{E})$ in $\P^1\times \P(\underline{w}) \times \P^{2d+N}$ are the minors of the matrix 
\[
\begin{pmatrix}
    x_0     &    u_{01}  & \cdots  &  u_{0d} \\   
    x_1     &    u_{11} & \cdots  &  u_{1d}  

\end{pmatrix},
\]
where $u_{ji}$, $j=0,1$, $i=1,\dots,d$ are the $2d$ coordinates of $\P^{2d+N}$, together with $I_e$, and the minors of the matrix

\[
\begin{pmatrix}
    s_0     &    s_1 & \cdots   &  s_N \\   
    u_0     &    u_1    & \cdots   &   u_N  

\end{pmatrix},
\]
where $u_0,\dots,u_N$ are the coordinates of $\P^N$.

\begin{prop}\label{Zisnormal}
Following the notation of the discussion above, and keeping the assumptions in there, suppose that $\O_{\P(\underline{w})}(e)$ is very ample and that $\nu_e(\P(\underline{w})) \subset \P^N$ is arithmetically Cohen-Macaulay. Then the natural map $c\colon \P(\mathscr{E}) \to \P^1\times \P^{2d+N}$ (obtained by composition) is an extremal divisorial contraction onto its scheme-theoretic image $Z$. 
\end{prop}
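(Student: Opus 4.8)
The plan is to describe $c$ fibrewise, isolate the divisor it contracts, and then establish normality of the image $Z$ by combining a Serre-criterion argument with Cohen--Macaulayness coming from the arithmetic Cohen--Macaulayness hypothesis. First I would make the map explicit: a point of $\P(\mathscr{E})$ lying over $(x,y)\in\P^1\times\P(\w)$ with fibre coordinate $[a_1:\dots:a_d:b]\in\P^d$ is sent to the point of $\P^1\times\P^{2d+N}$ with coordinates $x=[x_0:x_1]$, $u_{ji}=a_i x_j$ and $u_k=b\,s_k(y)$. From this I read off the exceptional locus: $c$ is an isomorphism over $\{(u_0,\dots,u_N)\neq 0\}$, because there $b\neq 0$ and, $\O_{\P(\w)}(e)$ being very ample, the point $[u_0:\dots:u_N]$ lies on $\nu_e(\P(\w))\cong\P(\w)$ and recovers $y$, after which the fibre coordinate is recovered from the $u_{ji}$ and $x$. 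The complement is the divisor $E=\P(\O_X(1,0)^{\oplus d})=\{b=0\}\cong\P^1\times\P(\w)\times\P^{d-1}$, which $c$ maps to the locus $\{u_0=\dots=u_N=0\}\cap Z$, a Segre-type image of $\P^1\times\P^{d-1}$ of dimension $d$ that forgets the factor $\P(\w)$. Since $\dim E=m+d>d=\dim c(E)$ while $c$ is an isomorphism elsewhere, $c$ is birational and contracts the divisor $E$; thus it is a divisorial contraction onto $Z$ once $Z$ is shown to be normal.

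For normality I would verify Serre's conditions $R_1$ and $S_2$ on $Z$, which is integral as the scheme-theoretic image of the integral variety $\P(\mathscr{E})$. Condition $R_1$ is immediate: $c(E)$ has codimension $m+1\geq 2$ in $Z$, so no prime divisor of $Z$ is contained in $c(E)$, and away from $c(E)$ the morphism $c$ is an isomorphism onto an open subset of the normal variety $\P(\mathscr{E})$; hence $Z$ is regular in codimension $1$. For $S_2$ it suffices to prove that $Z$ is Cohen--Macaulay, and this is where arithmetic Cohen--Macaulayness enters. I would show that the projection $Z\to\P^1$ is a Zariski-locally trivial fibration: over $\{x_0\neq 0\}$, eliminating $u_{1i}=(x_1/x_0)u_{0i}$ identifies $Z|_{\{x_0\neq 0\}}$ with $\A^1\times J$, where $J\subset\P^{d+N}$ is the projective join of the linear space $\{u_0=\dots=u_N=0\}\cong\P^{d-1}$ with $\nu_e(\P(\w))$. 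The homogeneous coordinate ring of $J$ is that of $\nu_e(\P(\w))$ tensored with a polynomial ring, so $J$ is arithmetically Cohen--Macaulay, in particular Cohen--Macaulay; this also shows the fibres are integral of constant dimension $m+d$, whence $Z$ is integral of dimension $m+d+1$ and the fibration is flat. Applying Lemma \ref{lem:CM} to $Z\to\P^1$ gives that $Z$ is Cohen--Macaulay, and with $R_1$ it is normal. One bookkeeping point to check is that the displayed equations (the $2\times 2$ minors together with $I_e$) cut out exactly this integral scheme: they vanish on $c(\P(\mathscr{E}))$, so they define a closed subscheme containing $Z$, which by the fibration argument is integral of dimension $m+d+1$ and therefore equals $Z$.

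Finally, for extremality I would analyse the contracted curves. Every curve contracted by $c$ lies in $E$, since $c$ is an isomorphism on the complement, and is contracted by $E=\P^1\times\P(\w)\times\P^{d-1}\to c(E)$, which forgets the middle factor; hence each such curve is numerically proportional to a curve contained in a fibre $\P(\w)$. As $\rho(\P(\w))=1$, all these curves span a single ray $R$ of $\overline{NE}(\P(\mathscr{E}))$, and $c$ is precisely the contraction of $R$; equivalently, $\P(\mathscr{E})$ and $Z$ are toric with $\rho(\P(\mathscr{E}))=3$ and $\rho(Z)=2$, so the contraction is extremal. Together with the previous paragraph this exhibits $c$ as an extremal divisorial contraction onto the normal variety $Z$.

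The step I expect to be the main obstacle is the Cohen--Macaulayness of $Z$: one must correctly identify the fibres of $Z\to\P^1$ with the join $J$, transfer arithmetic Cohen--Macaulayness from $\nu_e(\P(\w))$ to $J$ (using that joining with a linear space preserves the property), check flatness of the fibration, and confirm that the displayed equations define precisely $Z$ before invoking Lemma \ref{lem:CM}.
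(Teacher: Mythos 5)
Your proposal is correct and follows essentially the same route as the paper: normality of $Z$ is obtained from $R_1$ (since $c$ is an isomorphism away from a locus of codimension at least $2$) together with Cohen--Macaulayness, which both you and the paper deduce by viewing $Z \to \P^1$ as a flat fibration whose fibres are cones over $\nu_e(\P(\w))$ (your joins of a linear $\P^{d-1}$ with the Veronese) and invoking Lemma \ref{lem:CM}. The only minor differences are that the paper first identifies $Z$ as the blow-up of the join $W$ of $\Sigma_{1,d-1}$ and $\nu_e(\P(\w))$ along the ideal $(u_{01},u_{11})$ to obtain integrality and the defining equations, where you argue directly via Zariski-local triviality over $\P^1$, and that you spell out extremality (all contracted curves lie in $\P(\w)$-fibres of $E \to c(E)$, hence span a single ray), which the paper leaves implicit; note only that your parenthetical appeal to $Z$ being toric with $\rho(Z)=2$ would be circular, since toricness of $Z$ is deduced later from this very proposition, but your primary curve-class argument does not need it.
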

\begin{proof}
First of all, we observe that the subvariety $W$ of $\P^{2d+N}$ defined by 
\[
\begin{pmatrix}
    u_{01}  & \cdots  &  u_{0d} \\   
    u_{11} & \cdots  &  u_{1d}  

\end{pmatrix}
\]
and $I_e$ is the join of the Segre variety $\Sigma_{1,d-1}$, i.e. the image of the Segre embedding $$\P^1\times \P^{d-1}\to \P^{2d-1},$$ and $\nu_e(\P(\underline{w}))\subset \P^N$. In particular, $W$ is an arithmetically Cohen-Macaulay projective variety, because $\nu_e(\P(\underline{w}))$ and $\Sigma_{1,d-1}$ are. Hence, the subvariety $Z$ of $\P^1 \times \P^{2d+N}$ defined by 
\[
\begin{pmatrix}
    x_0     &    u_{01}  & \cdots  &  u_{0d} \\   
    x_1     &    u_{11} & \cdots  &  u_{1d}  

\end{pmatrix}
\]
and $I_e$ is the blowing-up of $W$ along the ideal sheaf generated by $u_{01},u_{11}$. In particular, $Z$ is an integral scheme and hence coincides with the scheme-theoretic image of $c$. We observe that the fibres of $c$ over the points of $Z$ are connected, as they are either a single point or isomorphic to $\P(\underline{w})$. The locus where the dimension of the fibres of $c$ jumps is $V=\{u_0=\cdots=u_N=0\}$. It follows that the restriction of $c$ to $\P(\mathscr{E})\setminus c^{-1}(V)$ is an isomorphism onto $Z \setminus V$, since there it has an inverse. Consequently, $Z$ is regular in codimension 1, because $\P(\mathscr{E})$ is normal. Note that the exceptional locus of $c\colon \P(\mathscr{E}) \to Z$ is isomorphic to $\P(\underline{w})\times \Sigma_{1,d-1}$
Then, to conclude that $c$ is an extremal divisorial contraction, we have to show that $Z$ is normal at the points contained in $c^{-1}(V)$. To this end, we consider the morphism $Z\to \P^1$. This is a flat isotrivial fibration, whose fibres are isomorphic to a cone over $\nu_e(\P(\underline{w}))$. Since $\nu_e(\P(\underline{w}))$ is arithmetically Cohen-Macaulay by our assumption, all the fibres of $Z\to \P^1$ are Cohen-Macaulay. In particular, we have a flat fibration over a Cohen-Macaulay base with Cohen-Macaulay fibres, from which we infer that $Z$ is Cohen-Macaulay. Consequently, $Z$ satisfies Serre's $R_1$ and $S_2$ conditions, hence it is normal.
\end{proof}

\begin{prop}\label{flip}
Let $Z$ and $W$ be the varieties in Proposition \ref{Zisnormal}. Let $f^+\colon Z^+ \to W$ be the blowing-up of $W$ along the ideal sheaf generated by $u_{01},\dots,u_{0d}$. Then $Z^+$  is a normal and $\Q$-factorial projective variety, and the induced diagram  
\begin{center}
\begin{tikzcd} 
Z\arrow[dr,"f"] \arrow[rr, dashed, "\psi"] &   & Z^+ \arrow[dl, "f^+"] \\
                       & W &
\end{tikzcd}
\end{center}
is a flip diagram, if $d>1$. Instead, if $d=1$, then $f$ is a divisorial contraction.
\end{prop}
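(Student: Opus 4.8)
The plan is to exploit the determinantal descriptions of $W$, $Z$ and $Z^+$ together with the row--column symmetry of the $2\times d$ Segre block. Just as $Z\subset \P^1\times\P^{2d+N}$ records the ratio $(x_0:x_1)$ of the two \emph{rows}, the variety $Z^+=\mathrm{Bl}_{(u_{01},\dots,u_{0d})}W\subset \P^{d-1}\times \P^{2d+N}$ records the ratio $(u_{01}:\cdots:u_{0d})$ of the $d$ \emph{columns}. Running the argument of Proposition \ref{Zisnormal} with the two roles interchanged, I would first realise $Z^+$ as the scheme-theoretic image of an extremal divisorial contraction $c^+\colon \P(\mathscr{E}^+)\to Z^+$, where $\mathscr{E}^+=\mathscr{O}(1,0)^2\oplus\mathscr{O}(0,e)$ on $\P^{d-1}\times\P(\w)$; the identification $Z^+=\mathrm{Bl}_{(u_{01},\dots,u_{0d})}W$ is obtained by the same minor computation used before Proposition \ref{Zisnormal}. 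The projection $Z^+\to\P^{d-1}$ is then a flat isotrivial fibration whose fibres are cones over the arithmetically Cohen--Macaulay variety $\nu_e(\P(\w))$, so $Z^+$ is Cohen--Macaulay and, being regular in codimension $1$ (as $f^+$ is an isomorphism away from the codimension-$2$ locus computed below, over which $W$ is normal), it is normal by Serre's criterion; finally, as the image of an extremal divisorial contraction of the $\Q$-factorial toric variety $\P(\mathscr{E}^+)$, it is $\Q$-factorial.

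Next I would compute the exceptional loci of $f$ and $f^+$ exactly. On the rank-one locus one writes $u_{ji}=v_jw_i$. If some column of the Segre block is nonzero, the rank condition forces $(x_0:x_1)=(v_0:v_1)$, the ideal $(u_{01},u_{11})$ is locally principal, and $f$ is an isomorphism; symmetrically, if some row is nonzero then $(u_{01}:\cdots:u_{0d})=(w_1:\cdots:w_d)$ is forced, the ideal $(u_{01},\dots,u_{0d})$ is locally principal, and $f^+$ is an isomorphism. Hence both $f$ and $f^+$ are isomorphisms away from the locus where the whole Segre block vanishes, which on $W$ is exactly $\nu_e(\P(\w))\cong\P(\w)$. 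Over this locus the fibre of $f$ is the full $\P^1$ of admissible $(x_0:x_1)$, so $E_f\cong\P^1\times\P(\w)$ has codimension $d$ in $Z$; the fibre of $f^+$ is a $\P^{d-1}$, so $E_{f^+}\cong\P^{d-1}\times\P(\w)$ has codimension $2$ in $Z^+$.

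The dichotomy then reads off from these codimensions. For $d>1$ both contracted loci have codimension at least $2$, so $f$ and $f^+$ are small contractions with relative Picard number one, and $\psi=(f^+)^{-1}\circ f$ is an isomorphism in codimension $1$ between the two $\Q$-factorial varieties; this is precisely a flip diagram, replacing the ruling $\P^1$ of $E_f$ by the ruling $\P^{d-1}$ of $E_{f^+}$. To certify that it is a flip rather than a mere small modification, I would compute the normal bundles of the contracted fibres, $\mathscr{O}(-1)^{\oplus d}$ along the contracted $\P^1$ and $\mathscr{O}(-1)^{\oplus 2}$ along the contracted $\P^{d-1}$ (up to a trivial summand coming from the $\P(\w)$-directions); adjunction then gives $K_Z\cdot C=d-2$ and $K_{Z^+}\cdot C^+=2-d$, of opposite sign for $d>2$ (and a flop when $d=2$). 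When $d=1$ the Segre block has a single column, the row ideal $(u_{01})$ is principal so $f^+$ is an isomorphism and $Z^+=W$, while $E_f$ now has codimension $1$; thus $f$ is a divisorial contraction.

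The main obstacle is the computation in the second step: showing that the column and row ideals are locally principal \emph{exactly} along the rank-one locus away from $\nu_e(\P(\w))$, and identifying the fibre dimensions over $\nu_e(\P(\w))$, so that the precise codimensions of $E_f$ and $E_{f^+}$ are pinned down. Once the exceptional loci are controlled, the normality and $\Q$-factoriality of $Z^+$ and the small-versus-divisorial dichotomy follow formally.
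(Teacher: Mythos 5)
Your architecture is sound and, away from one case, it genuinely diverges from the paper's proof in two places. First, for normality and $\Q$-factoriality of $Z^+$ you rerun Proposition \ref{Zisnormal} with rows and columns exchanged, i.e.\ with base $\P^{d-1}\times\P(\w)$ and bundle $\mathscr{E}^+=\O(1,0)^2\oplus\O(0,e)$, so that $Z^+\to\P^{d-1}$ becomes a flat isotrivial fibration into cones over $\nu_e(\P(\w))$; the paper instead argues directly on $Z^+$, proving Cohen-Macaulayness along the Cartier divisor $E=\{u_{01}=\cdots=u_{0d}=0\}$ via the blow-up $E\to W'$ and \cite[Theorem 2.3]{BH98}, and gets $\Q$-factoriality from $\mathrm{Cl}(Z^+)\cong\mathrm{Cl}(Z)$. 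Your symmetric route is legitimate (the paper itself introduces the contraction $\P(\mathscr{E}^+)\to Z^+$ later, precisely to compute divisors on $Z^+$) and is arguably cleaner. Second, to certify the flip you compute intersection numbers of $K$ with the contracted curves; your numbers are correct, since near $E_f$ the contraction $c$ of Proposition \ref{Zisnormal} is an isomorphism, so $N_{C/Z}$ is computed inside $\P(\mathscr{E})$ as $\Hom(\O(1,0)^d,\O(0,e))|_C\oplus\O^{\oplus m}\cong\O(-1)^{\oplus d}\oplus\O^{\oplus m}$, giving $K_Z\cdot C=d-2$, and symmetrically $K_{Z^+}\cdot C^+=2-d$. (Note that this exhibits $\psi$ as the flip of $f$ with respect to $-K_Z$, equivalently $\psi^{-1}$ as the $K$-flip of $f^+$; this is still a flip diagram in the paper's sense.) The paper instead exhibits an explicit divisor valid for every $d>1$: $D_1=\O_{\P^1\times\P^{2d+N}}(1,0)|_Z$ is $f$-ample, and its strict transform, which is exactly $E=D_2^+-D_1^+$, is $-f^+$-ample, so $\psi$ is the $(-D_1)$-flip of $f$.

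The gap is at $d=2$, a case included in the statement. There $K_Z\cdot C=K_{Z^+}\cdot C^+=0$, so your canonical-class certificate says nothing, and the parenthetical ``a flop when $d=2$'' is an assertion rather than a proof: to have a flip (or flop) diagram you must still produce a divisor $D$ with $-D$ $f$-ample whose strict transform is $f^+$-ample, and you must rule out that $\psi$ is an isomorphism over $W$; for $d=2$ both exceptional loci are $\P^1$-bundles over $\nu_e(\P(\w))$, so the dimension count that distinguishes the two sides when $d>2$ is unavailable. The repair is exactly the paper's divisor, which works uniformly: $D_1\cdot C=1>0$ while $\psi_*(D_1)=E$ satisfies $E\cdot C^+=-1<0$ (the strict transform of $\{x_0=0\}\cap Z$ lands in $\{u_{01}=\cdots=u_{0d}=0\}\cap Z^+$), which simultaneously certifies the flip for all $d>1$, including $d=2$, and shows that $\psi$ is not an isomorphism.
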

\begin{proof}
We start by observing that $W$ is normal, as it is arithmetically Cohen-Macaulay by the previous proposition, and regular in codimension 1, because it is birational to $Z$. Suppose that $d>1$. We note that $Z^+$ is a closed subvariety of $\P^{d-1}\times \P^{2d+N}$, and its equations in there are the $2\times 2$ minors of the matrix
\[
\begin{pmatrix}
    z_1     &     \cdots & z_d    \\
    u_{01}   &    \cdots &  u_{0d}    \\
    u_{11}   &    \cdots &  u_{1d}
\end{pmatrix},
\]
 along with the generators of $I_e$. We claim that the morphisms $f$ and $f^+$ are small birational contractions. Indeed, the exceptional locus of both is the preimage of $\{u_{ji}=0\}_{j,i}$. To be more precise, the exceptional locus of $f$ is isomorphic to $\P^1\times \P(\underline{w})$, which has codimension $d$ in $Z$, and that of $f^+$ is isomorphic to $\P^{d-1}\times \P(\underline{w})$, and so it has codimension $2$ in $Z^+$. To show that $Z^+$ is normal, it is then sufficient to show that it is normal along the divisor $E=\{u_{0i}=0\}_{i}\subset Z^+ $. Note that this is a Cartier divisor since it coincides with $\mathscr{I}\mathscr{O}_{Z^+}=\mathscr{O}_{Z^+}(1)$, where $\mathscr{I}$ denotes the ideal sheaf generated by $\{u_{0i}\}_{i}$. We now claim that $E$ is a Cohen-Macaulay variety. Let $W'$ be the subvariety of $W$ defined by the ideals $\{u_{0i}\}_i$ and $I_e$. This is a cone over  $\nu_e(\P(\underline{w}))$. The morphism $E\to W'$ is the blowing-up of $W'$ along the ideal sheaf generated by $\{u_{1i}\}_{i=1}^n$. By our assumption on $\nu_e(\P(\underline{w}))$, $E$ is Cohen-Macaulay away from the exceptional locus of $E\to W'$. The exceptional locus of this morphism is a Cartier divisor isomorphic to $\P^{d-1}\times \P(\underline{w})$, and so a Cohen-Macaulay variety. It follows by \cite[Theorem 2.3]{BH98} that $E$ is Cohen-Macaulay. Always by \cite[Theorem 2.3]{BH98}, the variety $Z^+$ is Cohen-Macaulay at the points of $E$, and hence is Cohen-Macaulay everywhere. Now, $Z^+$ is regular in codimension 1, since it is isomorphic in codimension 1 to $Z$, and from this, we infer that $Z^+$ is a normal projective variety. Furthermore, since $Z$ and $Z^+$ have isomorphic class groups (because they are isomorphic in codimension 1), it follows that $Z^+$ has Picard number $2$, and so is $\Q$-factorial. In particular, $\psi\colon Z \dashrightarrow Z^+$ is a small $\Q$-factorial modification. To conclude that $\psi$ is a flip, it is sufficient to find a Cartier divisor on $Z$ which is $f$-ample and whose strict transform via $\psi$ is $-f^+$-ample. Let $D_1$ be the restriction of $\O_{\P^{1}\times \P^{2d+N}}(1,0)$ to $Z$. Its strict transform via $\psi$ is the Cartier divisor $E$, defined above, which then satisfies $h^0(W^+,\O_{W^+}(E))=2$. As in the case of $Z$, it is easy to check that $Z^+$ is the codomain of an extremal divisorial contraction $\P(\mathscr{E}^+)\to Z^+$, where $$\mathscr{E^+}=\O_{\P^{d-1}\times \P(\underline{w})}(1,0)^2\oplus \O_{\P^{d-1}\times \P(\underline{w})}(0,e)$$ and $$\P(\mathscr{E}^+)\hookrightarrow \P^{d-1}\times \P(\underline{w})\times \P^{2d+N}.$$ Now, let $D_2^+$ (resp.\ $D_1^+$) be the restriction of $\O_{\P^{d-1}\times \P^{2d+N}}(0,1)$ (resp.\ $\O_{\P^{d-1}\times \P^{2d+N}}(1,0)$) to $Z^+$. An easy calculation shows that $$H^0(Z^+,D_2^+-D_1^+)\cong H^0(\P^{d-1}\times \P(\underline{w}),\mathscr{E}\otimes \O_{\P^{d-1}\times \P(\underline{w})}(-1,0))\cong \K^2,$$ and one easily checks that indeed we have $E=D_2^+-D_1^+$. The divisor $D_1$ is $f$-ample, and $E$ is $-f^+$-ample. In particular, $\psi$ is the $K_Z+(-K_Z-D_1)$-flip of $f$, and this concludes the proof. If $d=1$, we have $W=Z^+$ and $f$ is a divisorial contraction.
\end{proof}

\begin{ex}
\begin{itemize}
\item All Veronese varieties $\nu_e(\P^m)$ are arithmetically Cohen-Macaulay, and all effective (non-linearly trivial) divisors are very ample on $\P^m$. Hence, Proposition \ref{Zisnormal} applies in the smooth case. More generally, $\nu_e(\P(1^{m+1},e))$ is arithmetically Cohen-Macaulay, and we recall that $\P(1^{m+1},e)$ is isomorphic to the (projective) cone over the $e$th Veronese variety $\nu_e(\P^m)$.

\item By \cite[Proposition 5.13]{KM98}, quotient singularities are rational. Furthermore, by \cite[p. 39, Theorem]{Dolg82}, we have $H^i(\P^m(\underline{w}),\O_{\P(\underline{w})}(l))=0$ for any $l\geq 0$ and $0<i<m$. Then, by \cite[Lemma 7.2.7]{Fuj17}, as long as $\O_{\P^m(\underline{w})}(e)$ is very ample, and $\nu_e(\P^m(\underline{w}))\subset \P^N$ is projectively normal, any Cartier hypersurface of bidegree $(d,e)$ with $1<d<m$ fulfills the hypotheses of Proposition \ref{Zisnormal}. It is worth noting that embedded 3-dimensional weighted projective spaces are always projectively normal, as shown in \cite[Theorem 1]{Ogata05}.

\end{itemize}
\end{ex}

\begin{cor}
Let $Y$ be a general Cartier hypersurface of bidegree $(d,e)$ in a product $\P^1 \times \P^m(\underline{w})$, with $1\leq d<m$, $m\geq 3$. Suppose that $\O_{\P(\underline{w})}(e)$ is very ample and that $\nu_e(\P(\underline{w}))$ is arithmetically Cohen-Macaulay. Moreover, let $\P(\mathscr{E})$, $Z$ and $W$ be the varieties described in Propositions \ref{Zisnormal}, \ref{flip}. Then $\P(\mathscr{E})$, $Z$, $Z^+$ and $W$ are toric varieties, and the birational geometry of $Y$ is induced by that of $Z$.
\end{cor}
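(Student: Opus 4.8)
The plan is to deduce toricity one step at a time starting from $\P(\mathscr{E})$, and then to transport to $Y$ the explicit birational picture built in Propositions \ref{Zisnormal}--\ref{flip} through the Hu--Keel correspondence. For the first point, the base $X=\P^1\times\P^m(\underline{w})$ is toric, being a product of toric varieties, and each direct summand of $\mathscr{E}=\O_X(1,0)^d\oplus\O_X(0,e)$ is a torus-equivariant line bundle (every line bundle on a toric variety is linearisable). Hence $\P(\mathscr{E})$, the projectivisation of a split bundle of equivariant line bundles over a toric base, is again toric: the torus of $X$ together with the torus acting on the fibres $\P^d$ acts with a dense orbit.

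Next I would show that $Z$, $Z^+$ and $W$ are toric by exhibiting them as images of equivariant morphisms. The embeddings $\P(\mathscr{E})\hookrightarrow\P^1\times\P(\underline{w})\times\P^{2d+N}$ and $\P(\mathscr{E}^+)\hookrightarrow\P^{d-1}\times\P(\underline{w})\times\P^{2d+N}$ place them inside products of toric varieties, cut out by minors of matrices of homogeneous coordinates together with the ideal $I_e$ of the toric Veronese $\nu_e(\P(\underline{w}))$; all these equations are torus-invariant. Therefore the divisorial contraction $c\colon\P(\mathscr{E})\to Z$ of Proposition \ref{Zisnormal}, its analogue $c^+\colon\P(\mathscr{E}^+)\to Z^+$, and the contractions to $W$ of Proposition \ref{flip} are equivariant. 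Since $Z$, $Z^+$ and $W$ were already proven normal there, and each inherits a dense torus orbit as the image of the one on $\P(\mathscr{E})$, they are toric varieties; in particular $\psi\colon Z\dashrightarrow Z^+$ is a toric flip (a toric divisorial contraction when $d=1$).

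Finally I would match the birational geometry. By Corollary \ref{cor1} the variety $Y$ is a Mori dream space of Picard number $2$, and Proposition \ref{prop:coxring} presents $\Cox(Y)$ as a quotient of $\Cox(\P(\mathscr{E}))$ by the equations $I_Y$ defining $Y$. The restriction $N^1(Z)_{\R}\to N^1(Y)_{\R}$ is then an isomorphism of two-dimensional spaces identifying the generators $H_1,H_2$, and under it the cones $\Eff$, $\Mov$, $\Nef$ of $Z$ restrict to those of $Y$ computed in Corollary \ref{cor1}; moreover $\psi$ restricts to the small $\Q$-factorial modification $Y\dashrightarrow Y'$ of Proposition \ref{prop:genmembermds}, while the contractions $\P(\mathscr{E})\to Z$ and $Z\to W$ restrict to the fibration and contraction structures on $Y$. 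Applying \cite[Proposition 2.11]{HK00} then yields that the Mori chamber decomposition of $N^1(Y)_{\R}$, and hence the whole birational geometry of $Y$, is induced by that of the toric variety $Z$.

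The hard part will be verifying that the pair $(Y,Z)$ genuinely satisfies the hypotheses of \cite[Proposition 2.11]{HK00}: one must check that the presentation of $\Cox(Y)$ as a quotient of $\Cox(Z)$ is compatible with the $\Cl$-gradings, so that $N^1(Z)_{\R}\xrightarrow{\sim}N^1(Y)_{\R}$ carries the \emph{full} chamber structure of $Z$ onto that of $Y$, and that the flip together with the two contractions of $Z$ restrict to precisely the modifications of $Y$ identified in Corollary \ref{cor1}. Once this compatibility is secured, the toric geometry of $Z$ transfers verbatim to $Y$.
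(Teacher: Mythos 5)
Your toricity argument is sound and genuinely different from the paper's. Where you argue that $c$, its analogue for $\P(\mathscr{E}^+)$, and the contractions to $W$ are torus-equivariant (the maps are given by monomials in Cox coordinates, and the minors together with $I_e$ generate torus-stable ideals), and then combine this with the normality already established in Propositions \ref{Zisnormal} and \ref{flip} to get a dense torus orbit on the image, the paper instead simply quotes \cite[Toric Contraction Theorem II]{Wisniewski02}: an extremal contraction of a toric variety has toric image. Both are correct; yours is more hands-on (and needs the small correction that the equations are semi-invariant rather than invariant), while the paper's is a one-line citation once extremality of $c$ and of $\P(\mathscr{E}^+)\to Z^+$ is known. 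For $\P(\mathscr{E})$ itself the two arguments coincide.

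The second half is where your proposal stops short of a proof. You reduce the assertion that the birational geometry of $Y$ is induced by that of $Z$ to \cite[Proposition 2.11]{HK00}, and you yourself defer as ``the hard part'' the verification that the Hu--Keel ambient toric variety attached to the presentation of $\Cox(Y)$ in Proposition \ref{prop:coxring} is $Z$, with compatible grading and matching chambers and contractions. That deferred verification is not a routine compatibility check: it is the actual content of the corollary, and it is exactly what the paper proves, by explicit geometry rather than by the Hu--Keel machinery. Concretely, the paper (i) constructs a closed immersion $\iota\colon Y\hookrightarrow\P(\mathscr{E})$ as the section of $\P(\mathscr{E}_{|Y})\to Y$ determined by the cokernel line bundle of the degeneracy morphism $h$ of Proposition \ref{prop:genmembermds}, (ii) computes the ideal $I_Y=(f_0+x_0z_1,\dots,f_d-x_0z_d,t)$ of $Y$ in $\P(\mathscr{E})$, (iii) checks that $Y\cap\mathrm{Exc}(c)=\emptyset$, so that $c\circ\iota\colon Y\to Z$ is still a closed immersion, and (iv) verifies from the equations that $\psi_{|Y}$ is precisely the small $\Q$-factorial modification $Y\dashrightarrow Y^+$ of Proposition \ref{prop:genmembermds}, respectively that $Z\to W$ restricts to the divisorial contraction $Y\to\P(\underline{w})$ when $d=1$. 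Note also a circularity in your sketch: you speak of ``the restriction $N^1(Z)_{\R}\to N^1(Y)_{\R}$'' before any embedding $Y\subset Z$ exists; in the Hu--Keel approach the embedding lands in an abstract GIT quotient of $\mathrm{Spec}\,\K[x_0,x_1,y_0,\dots,y_m,z_1,\dots,z_d]$, and identifying that quotient, taken at the chamber of an ample class of $Y$, with the concretely defined $Z\subset\P^1\times\P^{2d+N}$ (for instance via the decomposition $\Cl(\P(\mathscr{E}))\cong c^*\Cl(Z)\oplus\Z[\mathrm{Exc}(c)]$) is a step you would still have to carry out. Your route can be completed, but as written the proof of the key assertion is missing.
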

\begin{proof}
Recall that $\mathscr{E}=\O(1,0)^d\oplus \O(0,e)$. Since this vector bundle is a direct sum of line bundles, the associated projective bundle is a toric variety. Let $p\colon \P(\mathscr{E})\to X$ be the structural morphism. It is easy to check that $$\mathrm{Cox}(\P(\mathscr{E}))\cong \K[x_0,x_1,y_0,\dots,y_m,z_1,\dots,z_d,t],$$ where the grading is as follows:
$x_i$ has degree $p^*(\O(1,0))$, $y_i$ has degree $p^*(\O(0,w_i))$, $z_i$ has degree $\O_{\P(\mathscr{E})}(1) \otimes p^*(\O(-1,0))$, and $t$ has degree $\O_{\P(\mathscr{E})}(1)\otimes p^*(\O(0,-e))$. Note that the zero locus of the section $t$ equals the exceptional divisor of the divisorial contraction $c\colon \P(\mathscr{E}) \to Z$, described in Proposition \ref{Zisnormal}. Then, by Propositions \ref{Zisnormal}, \ref{flip}, and for example \cite[Toric Contraction Theorem II]{Wisniewski02}, the varieties $Z$, $W$ are toric. When $d>1$, Theorem \cite[Toric Contraction Theorem II]{Wisniewski02} implies that also $Z^+$ is a toric variety since $Z^+$ is obtained via an extremal divisorial contraction $\P(\mathscr{E}^+)\to Z^+$, where $\mathscr{E}^+$ is as in Proposition \ref{flip}. The hypersurface $Y$ can be embedded in $Z$ via a closed immersion as follows:
the morphism of vector bundles $h\colon \O_X^{d+1}\to \O_X(1,0)^{d+1}\oplus \O_X(0,e)$, described in Proposition \ref{prop:genmembermds}, decreases of rank exactly at the points of $Y$, and exactly of 1. In particular, $(\O_Y(1,0)^{d+1}\oplus\O_Y(0,e))/\mathrm{im}(h_{|Y})$ is a line bundle, and so we have a section $Y \hookrightarrow \P(\mathscr{E}_{|Y})$. But we also have $\P(\mathscr{E}_{|Y})\hookrightarrow \P(\mathscr{E})$, so that we have a closed immersion $\iota\colon  Y\hookrightarrow \P(\mathscr{E})$.
From the defining equation of $Y$ in $\P^1 \times \P(\underline{w})$ it is easy to see that the ideal of $Y$ in $\P(\mathscr{E})$ is $$I_Y=(f_0+x_0z_1,f_1-x_0z_1+x_1z_2,\dots,f_{d-1}-x_0z_{d-1}+x_1z_d,f_d-x_0z_d,t).$$ We now observe that $Y\cap \mathrm{Exc}(c)=\emptyset$, and so $c\circ \iota\colon Y \to Z$ is a closed immersion. In particular, $Y$ is a complete intersection in $\P(\mathscr{E})$ and $Y$. Suppose that $d>1$ and consider the birational map $\psi\colon Z\dashrightarrow Z^+$, described in Proposition \ref{flip}. From the defining equations of $Y$ in $Z$, it is clear that $\mathrm{ind}(\psi)\cap Y=\{f_0=\cdots=f_d=0\}$, where $\mathrm{ind}(\psi)$ is the indeterminacy locus of $\psi$. Moreover, the image of a point $(x_0,x_1,y_0,\dots,y_m)$ via the birational map $Y\dashrightarrow Y^+$, described in Proposition \ref{prop:genmembermds}, is the tuple $(z_1,\dots,z_d,y_0,\dots,y_m)$ of $\P^{d-1}\times \P(\underline{w})$, such that $(z_1,\dots,z_d,1)$ is in the kernel of the matrix $M$ in Proposition \ref{prop:genmembermds}. But this tuple is exactly the one satisfying the generators of $I_Y$. This makes it clear that the restriction of $\psi$ to $Y$ coincides with the birational map $Y\dashrightarrow Y^+$ in Proposition \ref{prop:genmembermds}. If $d=1$, it is clear that the divisorial contraction $Z\to W$ restricted to $Y$ gives back the divisorial contraction $Y\to \P(\underline{w})$.
\end{proof}

\section{Calabi-Yau multi-weighted hypersurfaces and the cone conjecture}\label{morespaces}
We prove below Proposition \ref{propCY} and Theorem \ref{thmC}.
\begin{proofD}
Since $X$ is Gorenstein, $-K_X$ is Cartier. By the adjunction formula, the general member $Y$ of $|-K_X |$ is Calabi-Yau. First of all, we note that the Picard number of $Y$ equals that of $X$, by the Grothendieck-Lefschetz Theorem (see\ \cite[Theorem 1]{RS06}). We have $\overline{\mathrm{NE}}(Y)\subseteq \overline{\mathrm{NE}}(X)$, via proper push-forward of $1$-cycles, hence $\Nef(X)_{|Y}\subseteq \mathrm{Nef}(Y)$. Let $H_1,\dots,H_{n+k}$ be the nef $\Q$-Cartier divisors $\O(0,\dots,1,\dots,0)_{|Y}$, where the $1$ is in position $j$, for any $j=1,\dots,n+k$. Consider a linear combination (with real coefficients) $D=\sum_{j=1}^{n+k}a_jH_j$. We claim that $D$ is nef if and only if all the coefficients $a_i$ are nonnegative. If all the coefficients $a_i$ are nonnegative, the divisor $D$ is nef, as it is a sum of nef $\Q$-Cartier divisors. Suppose then that $D$ is nef. If there are no projective lines among the factors of $X$, we are done, hence we assume that there is at least one projective line $\P^1$ among the factors of $X$, so that $n\geq1$. Let $x_0,x_1$ be the coordinates of this projective line and assume that this projective line occupies the first position in $X$. Then $H_1=\O(1,0,\dots,0)_{|Y}$ is the pull-back of $\O_{\P^1}(1)$ via the projection $Y \to \P^1$. We have
\[
Y=\{x_0^2f_0+x_0x_1f_1+x_1^2f_2=0\}.
\]
As $Y$ is general and the dimension of $X$ is at least 3, the set of points of $(\P^1)^{n-1}\times \P(\w^1)\times \P(\w^2)\times \dots \times \P(\w^k)$ such that $f_0=f_1=f_2=0$ is not empty, hence there is at least one curve (actually, one smooth rational curve) contracted by the morphism \[Y\to (\P^1)^{n-1} \times \left(\prod_{i=1}^{k} \P(\w^i)\right).\] This implies that $a_1 \geq 0$. Then $\Nef(Y)\subseteq \mathrm{Nef}(X)_{|Y}$, and so $\mathrm{Nef}(Y)=\mathrm{Nef}(X)_{|Y}$.

\underline{Proof of (1)}. If we consider the short exact sequence $$0\to \O_X(-Y)\to \O_X \to \O_Y\to 0,$$ using the vanishing of cohomology in degree 1 on weighted projective spaces and the Künneth formula, one quickly realises that a line bundle $\O_Y(D)$ has global sections if and only if $D=\sum_ia_iH_i$ and any (meaningful) $a_i$ is nonnegative. Then $\mathrm{Eff}(Y)=\mathrm{Nef}(X)_{|Y}=\mathrm{Nef}(Y)$, and so we also have $\mathrm{Eff}(Y)=\mathrm{Mov}(Y)$. Moreover, any nef line bundle on $Y$ is semiample, and this is enough to conclude that $Y$ is a Mori dream space.

\underline{Proof of (2)}.
To see that $\mathrm{Nef}(Y)\subsetneq \mathrm{Eff}(Y)$ it suffices to observe that \[H^0\left(Y,\left[\sum_{i\geq 2}\left(\sum_jw_j^{i-1}\right)H_i\right]-H_1\right)\] is 2-dimensional.
\vspace{0.1cm}


Now, we prove that $Y$ admits finitely many small $\Q$-factorial modifications. Let $Y \dashrightarrow Y'$ be any such modification. Since $Y$ and $Y'$ are isomorphic in codimension 1, the canonical divisor of $Y'$ is trivial. Since $Y$ has klt singularities, also $Y'$ does. Hence, by \cite[Corollary 3.3]{Birkar11}, $Y \dashrightarrow Y'$ is a composition of flops. We now observe that $Y$ has only one flopping contraction, that is the projection $$\pi_1 \colon Y \to \prod_{i=1}^k \P(\w^i),$$ which contracts the extremal ray of the Mori cone
\[
R_1:=\left(\cap_{1\leq i \leq k+1 } H_{i}^{\perp}\right) \cap \overline{\text{NE}}(Y).
\] 
Indeed, consider the Stein factorisation 
\[
\begin{tikzcd}
Y \arrow[r,"\overline{\pi_1}"] &  Z_1 \arrow[r,"p_1"] & X_1
\end{tikzcd}
\]
of $\pi_{1}$. The exceptional locus of $\overline{\pi_1}$ is the set of points of $Y$ such that $\{f_0=f_1=f_2=0\}\cap Y$, and has codimension $2$ in $Y$, by the generality of $Y$. Whence $\overline{\pi_1}$ is a small contraction, and, by the triviality of $K_Y$, it is flopping. The morphism $p_1$ is a double cover of $X_1$, ramifying along the divisor $\{f_1^2-4f_0f_2=0\}\subset X_1$, and induces an automorphism of $Z_1$ of order two, exchanging the two sheets of the cover. This automorphism induces a birational involution of $Y$, which we call $\iota_1$. We claim that $\iota_1$ is the $-H_1$-flop of the flopping contraction $\overline{\pi}_1$. Indeed, we observe that $H_1+\iota_{1,*}(H_1)=\pi_1^*(\pi_{1,*}(H_1))$, hence $(H_1+\iota_{1,*}(H_1))\cdot C=0$, for any irreducible curve $C$ contracted by $\pi_1$. Moreover, $H_1\cdot C=1$ for any curve $C$ contracted by $\overline{\pi_1}$.
Then $\iota_{1,*}(H_1)$ is a $-\overline{\pi}_1$-ample divisor on $Y$, and is sent to $H_1$ via $\iota_1$, which is a $\overline{\pi}_1$-ample divisor. Whence $\iota_1$ is the $-H_1$-flop of $\overline{\pi_1}$. Then, up to automorphisms, $Y$ admits one and only one small $\Q$-factorial modification.

To conclude, we prove that $\mathrm{Mov}(Y)=\mathrm{Nef}(Y)\cup \iota_1^*(\mathrm{Nef}(Y))$. The inclusion $\supset$ is obvious. We are left to show $\subset$. Let $D$ be any Cartier divisor whose class lies in $\mathrm{Mov}(Y)$. If $D$ is nef, we are done, otherwise, we may assume that the pair $(Y,D)$ is klt and run an MMP for it. The first step of this MMP must be the involution $\iota_1$. Since $\iota_1$ has order $2$, the pair $(Y,\iota_{1,*}(D))$ is a (good) minimal model of $(Y,D)$. This proves the inclusion $\subset$.

\underline{Proof of (3)}. Since $m\geq 2$, arguing as in item (2), for any projection 
\[
\pi_{l} \colon Y \to X_l:= (\P^1)^{n-1} \times \left(\prod_{i=1}^k \P(\w^i)\right),
\] 
obtained by forgetting the $\P^1$ in position $l$, we can construct a birational involution $\iota_{l}$. Any two different such involutions do not commute, and hence generate an infinite subgroup of $\mathrm{Bir}(Y)$. Then, modulo automorphisms, $Y$ has infinitely many small $\Q$-factorial modifications, and this implies that $Y$ is not a Mori dream space.
\end{proofD}

We now prove Theorem \ref{thmC}.

\begin{proofC}
Let $D$ be any big and movable $\R$-divisor on $Y$. Up to picking a small positive multiple $\epsilon D$ of $D$, the pair $(Y,D)$ has klt singularities, and so has a minimal model $(Y',D')$, by \cite[Theorem 1.2]{BCHM10}. To be more precise, by running an MMP with scaling, we find a sequence of $D$-flops
\begin{equation}\label{flops}
(Y,D)=(Y_0,D)\dashrightarrow (Y_1,D_1) \dashrightarrow\cdots \dashrightarrow (Y_n,D_n)=(Y',D'),
\end{equation}
with $D'$ nef, and $Y'$ $\Q$-factorial and terminal. We claim that $Y'$ is isomorphic to $Y$. To this end, we show that the flop of any flopping contraction is to a variety isomorphic to $Y$. Since by Proposition \ref{propCY} $\mathrm{Nef}(Y)=\mathrm{Nef}(X)_{|Y}$, every flopping contraction of $Y$ must arise from a projection
\[
\pi_{l} \colon Y \to X_l:=(\P^1)^{n-1} \times \left(\prod_{i=1}^k \P^{m_i}(\w^i)\right),
\] 
obtained by forgetting the factor $\P^1$ in position $l$, for some $l\in\{1,\dots,n\}$. Indeed, all the other extremal contractions of $Y$ are onto projective varieties of dimension strictly smaller than $\text{dim}(Y)$.  Arguing as in Proposition \ref{propCY}, item (2), we see that every $\pi_l$ induces a flopping contraction. In particular, if
\[
\begin{tikzcd}
Y \arrow[r,"\overline{\pi_l}"] &  Z_l \arrow[r,"p_l"] & X_l
\end{tikzcd}
\]
is the Stein factorisation of $\pi_{l}$, the morphism $\overline{\pi_l} \colon Y \to Z_l$ contracts the extremal ray 
\[
R_l:=\left(\cap_{\substack{1\leq i \leq n+k \\ i\neq l}} H_{i}^{\perp}\right) \cap \overline{\text{NE}}(Y)
\] 
of the Mori cone of $Y$. The morphism $p_l$ is a double cover of $X_l$, ramifying along the divisor $\{f_1^2-4f_0f_2=0\}\subset X_l$, and induces an automorphism of $Z_l$ of order two, exchanging the two sheets of the cover. This automorphism induces a birational involution of $Y$, which we call $\iota_l$. Arguing as in Proposition \ref{propCY}, item (2), one sees that $\iota_l$ is the $-H_l$-flop of the flopping contraction $\overline{\pi}_l$. 
\vspace{0.1cm}

It follows that all the varieties appearing in sequence  (\ref{flops}) are isomorphic to $Y$, and the birational maps in there must be some of the involutions $\iota_l$, for certain values of $l$. This implies that $[D]$ is the image of some nef class via some element of $\mathrm{Bir}(Y)$, whence $\text{Bir}(Y)\cdot \text{Nef}(Y) \supset \text{int}(\text{Mov}(Y))$. Moreover $\mathrm{Nef}(Y)\subset \overline{\mathrm{Mov}}^+(Y)$, because $\mathrm{Nef}(Y)$ is spanned by semiample divisors. Since $X$ has terminal singularities, by \cite[Lemma 5.17]{KM98} the variety $Y$ is terminal. Then any element of $\mathrm{Bir}(Y)$ is a small $\Q$-factorial modification, by \cite[Corollary 3.54]{KM98}, and more precisely a composition of flops, by \cite[Theorem 1]{Kawamata08}. It follows that $\mathrm{Bir}(Y)$ is generated by the involutions $\iota_l$ and $\mathrm{Aut}(Y)$. Note that $\mathrm{Bir}(Y)$ acts on $\mathrm{Pic}(Y)$. Indeed, for any $1\leq l\leq n$, the involution $\iota_l$ fixes $H_i$, for $i\neq l$, and sends $H_l$ to $$\left[\sum_{i\neq l} 2H_i+\sum_{l+1\leq i\leq l+k}\left(\sum_jw_j^{i}\right)H_i\right]-H_l.$$ By applying \cite[Proposition-Definition 4.1]{Looijenga14}, we obtain $\mathrm{Bir}(Y)\cdot \mathrm{Nef}(Y)=\overline{\mathrm{Mov}}^+(Y)$. Since any extremal generating divisor for $\mathrm{Nef}(Y)$ is semiample, we conclude that $\overline{\text{Mov}}^+(Y)=\mathrm{Mov}(Y)$, because being movable is preserved under small $\Q$-factorial modifications, and $\mathrm{Mov}(Y)\subset \mathrm{Bir}(Y)\cdot \mathrm{Nef}(Y)$.
Now we use \cite[Application 4.14]{Looijenga14} (see also \cite[Lemma 3.5]{LZ22}) to deduce the existence of a fundamental domain $\Pi\subset \overline{\text{Mov}}^+(Y)$ for the action of $\text{Bir}(Y)$ on $\overline{\text{Mov}}^+(Y)=\mathrm{Mov}(Y)$.
\end{proofC}

\printbibliography

@article {Ottem15,
    AUTHOR = {Ottem, John C.},
     TITLE = {Birational geometry of hypersurfaces in products of projective
              spaces},
   JOURNAL = {Math. Z.},
    VOLUME = {280},
      YEAR = {2015},
    NUMBER = {1-2},
     PAGES = {135--148}
       }

@incollection {Mori87,
    AUTHOR = {Mori, Shigefumi},
     TITLE = {Classification of higher-dimensional varieties},
 BOOKTITLE = {Algebraic geometry, {B}owdoin, 1985 ({B}runswick, {M}aine,
              1985)},
    SERIES = {Proc. Sympos. Pure Math.},
    VOLUME = {46, Part 1},
     PAGES = {269--331},
 PUBLISHER = {Amer. Math. Soc., Providence, RI},
      YEAR = {1987}
}

@article {Friedrich74,
    AUTHOR = {Ischebeck, Friedrich},
     TITLE = {Zur {P}icard-{G}ruppe eines {P}roduktes},
   JOURNAL = {Math. Z.},
    VOLUME = {139},
      YEAR = {1974},
     PAGES = {141--157}
}

@online{Schwede,
  title={Generalized divisors and reflexive sheaves},
  author={Schwede, Karl},
  year={2008},
  url={https://www.math.utah.edu/~schwede/Notes/GeneralizedDivisors.pdf}
}

@misc{LZ22,
      title={On the relative Morrison-Kawamata cone conjecture}, 
      author={Zhan Li and Hang Zhao},
      year={2022},
      url={https://arxiv.org/abs/2206.13701}
}

@article {Birkar11,
    AUTHOR = {Birkar, Caucher},
     TITLE = {On existence of log minimal models {II}},
   JOURNAL = {J. Reine Angew. Math.},
    VOLUME = {658},
      YEAR = {2011},
     PAGES = {99--113}
}

@article{EGA4.3,
     author = {Grothendieck, Alexander},
     title = {\'El\'ements de g\'eom\'etrie alg\'ebrique : {IV.} {\'Etude} locale des sch\'emas et des morphismes de sch\'emas, {Troisi\`eme} partie},
     journal = {Inst. Hautes Études Sci. Publ. Math.},
     pages = {5--255},
     publisher = {Inst. Hautes Études Sci.
Presses Univ. France},
     volume = {28},
     year = {1966},
     url = {http://www.numdam.org/item/PMIHES_1966__28__5_0/}
}

@online{Skauli17,
    AUTHOR = {Skauli,Bjørn},
     TITLE = {The Cone Conjecture for Some Calabi-Yau Varieties},
     URL={https://www.duo.uio.no/bitstream/handle/10852/60868/Skauli_Thesis.pdf?sequence=14&isAllowed=y},
     YEAR= {2017},
     note={Master's Thesis.}
       }

@article {Yanez22,
    AUTHOR = {Y\'a\~nez, Jos\'e{} I.},
     TITLE = {Birational automorphism groups and the movable cone theorem
              for {C}alabi-{Y}au complete intersections of products of
              projective spaces},
   JOURNAL = {J. Pure Appl. Algebra},
    VOLUME = {226},
      YEAR = {2022},
    NUMBER = {10},
     PAGES = {Paper No. 107093, 22}
}

@article {CO15,
    AUTHOR = {Cantat, Serge and Oguiso, Keiji},
     TITLE = {Birational automorphism groups and the movable cone theorem
              for {C}alabi-{Y}au manifolds of {W}ehler type via universal
              {C}oxeter groups},
   JOURNAL = {Amer. J. Math.},
    VOLUME = {137},
      YEAR = {2015},
    NUMBER = {4},
     PAGES = {1013--1044},
}

@article {BMP11,
    AUTHOR = {Boissi\`ere, Samuel and Mann, \'Etienne and Perroni, Fabio},
     TITLE = {Computing certain {G}romov-{W}itten invariants of the crepant
              resolution of {$\mathbf{P}(1,3,4,4)$}},
   JOURNAL = {Nagoya Math. J.},
    VOLUME = {201},
      YEAR = {2011},
     PAGES = {1--22}
}

@article {Kawamata08,
    AUTHOR = {Kawamata, Yujiro},
     TITLE = {Flops connect minimal models},
   JOURNAL = {Publ. Res. Inst. Math. Sci.},
    VOLUME = {44},
      YEAR = {2008},
    NUMBER = {2},
     PAGES = {419--423}
}

@book {Har,
    AUTHOR = {Hartshorne, Robin},
     TITLE = {Algebraic Geometry},
    SERIES = {Graduate Texts in Mathematics},
 PUBLISHER = {Springer},
      YEAR = {1977},
}

@article {BCHM10,
    AUTHOR = {Birkar, Caucher and Cascini, Paolo and Hacon, Christopher D.
              and McKernan, James},
     TITLE = {Existence of minimal models for varieties of log general type},
   JOURNAL = {J. Amer. Math. Soc.},
    VOLUME = {23},
      YEAR = {2010},
    NUMBER = {2},
     PAGES = {405--468}
}

@book{Ott95,
  title={Variet{\`a} proiettive di codimensione piccola},
  author={Ottaviani, Giorgio},
  series={Ist. nazion. di alta matematica F. Severi},
  url={https://books.google.fr/books?id=HmHqngEACAAJ},
  year={1995},
  publisher={Aracne}
}

@misc{Kasp13,
      title={Classifying terminal weighted projective space}, 
      author={Alexander Kasprzyk},
      year={2013},
      url={https://arxiv.org/abs/1304.3029}, 
}

@online{Milne,
    AUTHOR = {Milne, James},
     TITLE = {Algebraic Geometry},
     URL={https://www.jmilne.org/math/CourseNotes/AG.pdf},
     note={Online notes.}
       }

@inproceedings {Banica91,
    AUTHOR = {B\u{a}nic\u{a}, Constantin},
     TITLE = {Smooth reflexive sheaves},
 BOOKTITLE = {Proceedings of the {C}olloquium on {C}omplex {A}nalysis and
              the {S}ixth {R}omanian-{F}innish {S}eminar},
   JOURNAL = {Rev. Roumaine Math. Pures Appl.},
    VOLUME = {36},
      YEAR = {1991},
    NUMBER = {9-10},
     PAGES = {571--593}
}

@book{KM98,
	author = {Koll{\'a}r, Janos and Mori, Shigefumi},
	pages = {viii+254},
	publisher = {Cambridge University Press, Cambridge},
	series = {Cambridge Tracts in Math.},
	title = {Birational geometry of algebraic varieties},
	volume = {134},
	year = {1998}
}

@book{Fuj17,
	author = {Fujino, Osamu},
	publisher = {Mathematical Society of Japan, Tokyo},
	series = {MSJ Memoirs},
	title = {Foundations of the minimal model program},
	volume = {35},
	year = {2017}
}

@book{BH98, 
place={Cambridge}, 
edition={2}, 
series={Cambridge Studies in Advanced Mathematics}, 
title={Cohen-Macaulay Rings}, 
publisher={Cambridge University Press}, 
author={Bruns, Winfried and Herzog, H. Jürgen}, 
year={1998}, 
collection={Cambridge Studies in Advanced Mathematics}
}

@book{GW10,
  title={Algebraic Geometry: Part I: Schemes. With Examples and Exercises},
  author={G{\"o}rtz, Ulrich and Wedhorn, Torsten},
  series={Advanced Lectures in Mathematics},
  year={2010},
  publisher={Vieweg+Teubner Verlag}
}

@book{ACGH13,
  title={Geometry of Algebraic Curves: Volume I},
  author={Arbarello, Enrico and Cornalba, Maurizio and Griffiths, Phillip and Harris, Joseph D.},
  series={Grundlehren der mathematischen Wissenschaften},
  year={2013},
  publisher={Springer New York}
}

@book{Liu06,
  title={Algebraic Geometry and Arithmetic Curves},
  author={Liu, Qing},
  series={Oxford graduate texts in mathematics},
  year={2006},
  publisher={Oxford University Press}
}

@article {Ito14,
    AUTHOR = {Ito, Atsushi},
     TITLE = {Examples of {M}ori dream spaces with {P}icard number two},
   JOURNAL = {Manuscripta Math.},
    VOLUME = {145},
      YEAR = {2014},
    NUMBER = {3-4},
     PAGES = {243--254}
}

@incollection {Wisniewski02,
    AUTHOR = {Wi\'sniewski, Jaros\l aw A.},
     TITLE = {Toric {M}ori theory and {F}ano manifolds},
 BOOKTITLE = {Geometry of toric varieties},
    SERIES = {S\'emin. Congr.},
    VOLUME = {6},
     PAGES = {249--272},
 PUBLISHER = {Soc. Math. France, Paris},
      YEAR = {2002}
}

@article{BR86,
author = {Beltrametti, Mauro and Robbiano, Lorenzo},
year = {1986},
month = {01},
title = {Introduction to the theory of weighted projective spaces},
volume = {4},
journal = {Expositiones Mathematicae}
}

@InProceedings{Dolg82,
author="Dolgachev, Igor",
title="Weighted projective varieties",
booktitle="Group Actions and Vector Fields",
year="1982",
publisher="Springer Berlin Heidelberg",
address="Berlin, Heidelberg",
pages="34--71",
isbn="978-3-540-39528-7"
}

@incollection{HK00,
    AUTHOR = {Hu, Yi and Keel, Sean},
     TITLE = {Mori dream spaces and {GIT}},
      NOTE = {Dedicated to William Fulton on the occasion of his 60th
              birthday},
   JOURNAL = {Michigan Math. J.},
  FJOURNAL = {Michigan Mathematical Journal},
    VOLUME = {48},
      YEAR = {2000},
     PAGES = {331--348}
}

@misc{SP24,
  author       = {The {Stacks project authors}},
  title        = {The Stacks project},
  howpublished = {\url{https://stacks.math.columbia.edu}},
  year         = {2024},
}

@article {RS06,
    AUTHOR = {Ravindra, Girivaru V. and Srinivas, Vasudevan},
     TITLE = {The {G}rothendieck-{L}efschetz theorem for normal projective
              varieties},
   JOURNAL = {J. Algebraic Geom.},
    VOLUME = {15},
      YEAR = {2006},
    NUMBER = {3},
     PAGES = {563--590}
}

@article {RS09,
    AUTHOR = {Ravindra, Girivaru V. and Srinivas, Vasudevan},
     TITLE = {The {N}oether-{L}efschetz theorem for the divisor class group},
   JOURNAL = {J. Algebra},
    VOLUME = {322},
      YEAR = {2009},
    NUMBER = {9},
     PAGES = {3373--3391},

}

@book{Laz,
    author  = "Robert Lazarsfeld",
    title   = "Positivity in algebraic geometry I \& II",
    year    = "2004",
    publisher  = "Springer Nature",
}

@article {Looijenga14,
    AUTHOR = {Looijenga, Eduard},
     TITLE = {Discrete automorphism groups of convex cones of finite type},
   JOURNAL = {Compos. Math.},
    VOLUME = {150},
      YEAR = {2014},
    NUMBER = {11},
     PAGES = {1939--1962}
}

@article{Ogata05,
author = {Ogata, Shoetsu},
issn = {0386-5991},
journal = {Kodai Math. J.},
number = {3},
pages = {519-524},
publisher = {Tokyo Institute of Technology, Department of Mathematics},
title = {k-normality of weighted projective spaces},
volume = {28},
year = {2005},
}

@book{ADHL14, 
place={Cambridge}, 
series={Cambridge Studies in Advanced Mathematics}, 
title={Cox Rings}, 
publisher={Cambridge University Press}, 
author={Arzhantsev, Ivan and Derenthal, Ulrich and Hausen, Jürgen and Laface, Antonio}, 
year={2014}, 
collection={Cambridge Studies in Advanced Mathematics}
}

@book{Matsumura87, 
place={Cambridge}, 
series={Cambridge Studies in Advanced Mathematics}, 
title={Commutative Ring Theory}, 
publisher={Cambridge University Press}, 
author={Matsumura, Hideyuki}, 
editor={Reid, Miles}, 
year={1987}, 
collection={Cambridge Studies in Advanced Mathematics}
}

@article{Okawa,
author = {Shinnosuke Okawa},
year = {2016},
pages = {1315-1342},
title = {On images of Mori dream spaces},
volume = {364},
journal = {Math. Ann.},
number = {3-4}
}
\end{document}